 \newtheorem{theorem}{Theorem}[section]
 \renewcommand\thetheorem{\Alph{theorem}}
 \newtheorem{corollary}[theorem]{Corollary}
 \newtheorem{lemma}[theorem]{Lemma}
 \theoremstyle{definition}
 \newtheorem{definition}[theorem]{Definition}
 \theoremstyle{remark}
 \newtheorem{remark}{Remark}[section]
 \theoremstyle{example}
 \newtheorem{example}{Example}[section]
 \numberwithin{equation}{section}
\begin{document}

\title[Topological Equivalence and Morse Theory]
{An Application of Topological Equivalence to Morse Theory}

\author{LIZHEN QIN}

\address{Mathematics Department of Nanjing University, Nanjing, Jiangsu 210093, P.R.China}

\email{qinlz@nju.edu.cn}

%\thanks{This work was completed with the support of an Izaak
 %Walton Killam Memorial Scholarship.}

%\thanks{The author was also supported in part by the Research
 %Council of Slovenia.}

%\subjclass{Primary 47A15; Secondary 46A32, 47D20}

\keywords{Morse theory, negative gradient-like dynamical system,
topological equivalence, Moduli space, compactification, orientation
formula, CW structure}

%\date{February 15, 1995 and, in revised form, July 6, 1995.}

%\dedicatory{}

%\commby{Daniel J. Rudolph}

%%% ----------------------------------------------------------------------

\begin{abstract}
In a previous paper, under the assumption that the Riemannian
metric is special, the author proved some results about the moduli spaces
and CW structures arising from Morse theory. By virtue of
topological equivalence, this paper extends those results by
dropping the assumption on the metric.

In particular, we give a strong solution to the following classical
question: {\it Does a Morse function on a compact Riemannian
manifold give rise to a CW decomposition that is homeomorphic to
the manifold?}
\end{abstract}
%%% ----------------------------------------------------------------------
\maketitle
%%% ----------------------------------------------------------------------

%--------------------------------------------------------------------------------------------------------------------
%--------------------------------------------------------------------------------------------------------------------
%--------------------------------------------------------------------------------------------------------------------
%--------------------------------------------------------------------------------------------------------------------
\section{Introduction}\label{section_introduction}
In a previous paper \cite{qin}, the author proved some results on
moduli spaces and CW structures arising from Morse theory in the CF
case. By the CF case, we mean the Morse function satisfies the
Palais-Smale Condition (C) on a complete Hilbert-Riemannian manifold
and its critical points have finite indices (see \cite[def.\
2.6]{qin}). Those results include the manifold structure of the
compactified moduli spaces, orientation formulas, and the CW
structure on the underlying manifold. (See \cite{qin} for a detailed
description and a bibliography.)

Most results in \cite{qin} are based on the assumption of that the
Riemannian metric (or the negative gradient vector field) is
locally trivial (see Definition \ref{definition_locally_trivial}).
This means the vector field has the simplest form near each critical
point.

In this paper, by virtue of topological equivalence (see Definition
\ref{definition_topological_equivalence}), we shall extend those
results by dropping the above assumption provided that the Morse
function is proper. Here the underlying manifold has to be finite
dimensional but not necessarily compact.

In order to apply topological equivalence, based on the idea
outlined in the paper by Newhouse and Peixoto
\cite{newhouse_peixoto}, we shall prove the following main theorem
stated in Franks' paper \cite[prop.\ 1.6]{franks}.

\begin{theorem}\label{theorem_A}
Suppose $f$ is a Morse function on a compact manifold $M$. Suppose
$X$ is a negative gradient-like field for $f$ (see Definition
\ref{definition_gradient_like}), and $X$ satisfies transversality
(see Definition \ref{definition_transversality}). Then there is a
regular path between $X$ and $Y$ such that $Y$ is also a negative
gradient-like field for $f$. More importantly, $Y$ is locally
trivial. In particular, there is a topological equivalence between
$X$ and $Y$.
\end{theorem}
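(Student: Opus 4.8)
The plan is to alter $X$ only inside pairwise disjoint Morse charts about the critical points, to carry the alteration out through a one-parameter family of negative gradient-like fields for $f$, and then to upgrade that family to a regular path by a general-position argument. By the Morse Lemma, fix for each $p\in\mathrm{Crit}(f)$ of index $\lambda_p$ coordinates $w=(u,v)\in\mathbb{R}^{\lambda_p}\times\mathbb{R}^{n-\lambda_p}$ on a chart $U_p$ with $f=f(p)-|u|^2+|v|^2$; choose a concentric smaller ball $V_p$ with $\overline{V_p}\subset U_p$ and a cutoff $\rho_p\colon M\to[0,1]$ that equals $1$ on $V_p$ and is supported in $U_p$. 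In these coordinates $B_p:=2u\,\partial_u-2v\,\partial_v$ is $-\nabla f$ for the Euclidean metric, hence negative gradient-like on $U_p$; its (constant) linearization is the matrix $\mathrm{diag}(2I_{\lambda_p},-2I_{n-\lambda_p})$, which is hyperbolic of unstable dimension $\lambda_p$, and this standard linear $B_p$ is the local model of a locally trivial field. Set $Z_t:=X+t\rho_p(B_p-X)=(1-t\rho_p)X+t\rho_p B_p$ on each $U_p$ and $Z_t:=X$ elsewhere, for $t\in[0,1]$.

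Each $Z_t$ is negative gradient-like for $f$, and $Y:=Z_1$, which equals $B_p$ on every $V_p$, is locally trivial. Off $\mathrm{Crit}(f)$ one has $df(Z_t)=(1-t\rho_p)\,df(X)+t\rho_p\,df(B_p)<0$, because the two coefficients are nonnegative with sum $1$ while $df(X)<0$ and $df(B_p)=-|\nabla f|^2<0$ there; in particular $Z_t$ cannot vanish off $\mathrm{Crit}(f)$, so its zero set is exactly $\mathrm{Crit}(f)$. At $p$ we have $Z_t(p)=0$, and since $B_p-X$ vanishes at $p$ the term carrying $d\rho_p$ drops out, so $DZ_t(p)=(1-t)A_p+tB_p=:C_t$, where $A_p:=DX(p)$ and $B_p$ is identified with its matrix. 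Writing $H:=\mathrm{Hess}_p f$, Taylor expansion of $df(X)<0$ near $p$ forces $\langle A_p w,Hw\rangle\le 0$ for all $w$, while $\langle B_p w,Hw\rangle=-|Hw|^2<0$ for $w\ne 0$; hence $\langle C_t w,Hw\rangle<0$ for all $w\ne 0$ whenever $t>0$. A linear map $C$ with $\langle Cw,Hw\rangle<0$ for all $w\ne 0$ is hyperbolic: a zero eigenvalue would violate the inequality at its eigenvector, and a purely imaginary pair would give a periodic orbit of the linear flow along which $f$ is strictly decreasing by that inequality yet returns to its value. Thus every $C_t$ is hyperbolic, and by continuity of the splitting into stable and unstable eigenspaces its unstable dimension stays equal to $\lambda_p$. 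This establishes that $Y$ is negative gradient-like and locally trivial and that $X$ and $Y$ are joined by a path of negative gradient-like fields for $f$; it remains to make this path regular and to deduce the topological equivalence.

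This last step is the substantial one. Transversality of the stable and unstable manifolds, i.e.\ the Morse-Smale condition, is not a convex property and need not survive the interpolation $\{Z_t\}$ above; but the alteration is local, effected near each $p$ by diffeomorphisms fixing $p$, so the global stable and unstable manifolds are merely moved by ambient isotopies supported near the critical set. Following the scheme of \cite{newhouse_peixoto}, one perturbs the family slightly, again only near $\mathrm{Crit}(f)$ so that the endpoint stays locally trivial, and applies a jet/general-position argument to the family of pairs $(W^u,W^s)$ to obtain a path meeting the non-transversality locus only at finitely many parameter values and in the expected codimension; since $f$ is a Lyapunov function for every $Z_t$ and the critical points together with their indices are fixed, the chain-recurrent set is always exactly $\mathrm{Crit}(f)$, and one checks that the resulting crossings do not alter the topological equivalence class. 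This is exactly what a regular path is designed to provide, and I expect arranging it to be the main obstacle. Granting it, a topological equivalence between $X=Z_0$ and $Y=Z_1$ follows by the usual compactness argument: cover $[0,1]$ by finitely many parameter subintervals on each of which structural stability of the (piecewise Morse-Smale) family furnishes a topological equivalence, then compose them.
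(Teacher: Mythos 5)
Your construction of the path $Z_t$ and the verification that each $Z_t$ is negative gradient-like for $f$ with hyperbolic singularities of unchanged index is correct and, as far as it goes, matches the spirit of the paper's argument. But you have stopped precisely at the point the paper identifies as the actual content of the theorem. You concede that transversality ``need not survive the interpolation'' and that arranging a regular path is ``the main obstacle,'' and then you invoke a jet/general-position argument that would only give a path with finitely many non-transversal parameters. That does not prove the statement: a regular path is, by definition, transversal at \emph{every} parameter, and the paper's whole point (see the discussion of \cite{newhouse_peixoto} in the Introduction) is that the $C^1$ change along the path is large, so the usual openness of Morse--Smale systems does not help. Worse, your remark that ``the resulting crossings do not alter the topological equivalence class'' is not justified and is generally false: a codimension-one saddle connection between critical points of the same index is exactly the kind of bifurcation that changes the topological type of the flow, which is why structural stability requires Morse--Smale, not merely Morse.

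There is a second, more subtle gap. You use the ordinary Morse Lemma, so your Morse chart splitting $\mathbb{R}^{\lambda_p}\times\mathbb{R}^{n-\lambda_p}$ need not be tangent at $p$ to $\mathcal{D}(p;X)$ and $\mathcal{A}(p;X)$; those tangent spaces depend on the metric, not just on $f$. Consequently, along your interpolation $Z_t$, the local stable and unstable manifolds of $p$ actually move inside the chart, and there is no reason the global transversality constraints can be maintained. The paper avoids this by first proving a Strengthened Morse Lemma (Theorem~\ref{theorem_Morse}) producing a chart in which the local invariant manifolds are the coordinate subspaces; a small $C^1$ perturbation then linearizes $X$ near $p$ as $(Ax_1,-Bx_2)$ with $A,B$ symmetric positive definite, and the interpolation from there to $(x_1,-x_2)$ is carried out so that the local invariant manifolds never move. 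The transversality of the \emph{global} invariant manifolds of other critical points passing through the chart is then controlled by a uniform inclination estimate (Lemma~\ref{lemma_inclination}) that is explicitly independent of the cutoff scale $r$ and of the interpolation parameter $s$, combined with a $\lambda$-Lemma argument (Lemma~\ref{lemma_transversl}). This uniform-in-$r$ estimate is the key new ingredient you would need; without it, or something equivalent, you only have the easy half of the theorem.
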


In Theorem \ref{theorem_A}, by a regular path, we mean a continuous
path of negative gradient-like vector fields in which each single
vector field on the path satisfies transversality. A precise version
of Theorem \ref{theorem_A} is Theorem \ref{theorem_regular_path}.

The importance of Theorem \ref{theorem_A} is that it can be combined
with the results of \cite{qin} to give an extension of those results
to more general metrics. In particular, we give a strong solution to
the following classical question which had been considered by Thom
(\cite{thom}), Bott (\cite[p.\ 104]{bott}) and Smale (\cite[p.\
197]{smale2}): {\it Does a Morse function on a compact Riemannian
manifold give rise to a CW decomposition that is homeomorphic to
the manifold such that its open cells are the unstable manifolds of
the negative gradient vector field?} A corollary of Theorem
\ref{theorem_cw_k(a)} gives the following answer which strengthens
the work in \cite{kalmbach2} and \cite{laudenbach} (see also Remark
\ref{remark_CW_compact}):

\begin{theorem}\label{theorem_B}
Suppose $f$, $M$ and $X$ are the same as those in Theorem
\ref{theorem_A}. Then the compactified unstable manifolds of $X$
give a CW decomposition that is homeomorphic to $M$. The open cells
of this CW complex are the unstable manifolds. Furthermore, the
characteristic maps have explicit formulas.
\end{theorem}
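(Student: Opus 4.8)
The plan is to obtain Theorem \ref{theorem_B} as a consequence of Theorem \ref{theorem_A} together with the results of \cite{qin}, which were established under the hypothesis of local triviality. First I would apply Theorem \ref{theorem_A} to the pair $(f,X)$ to produce a negative gradient-like field $Y$ for $f$ that is locally trivial, and a topological equivalence $h\colon M\to M$ between $X$ and $Y$ (see Definition \ref{definition_topological_equivalence}), i.e.\ a homeomorphism of $M$ carrying the oriented trajectories of $X$ onto those of $Y$. Since $Y$ is locally trivial and $M$ is compact, the CW result of \cite{qin} applies directly to $Y$: the compactified unstable manifolds $\overline{W^u_Y(q)}$, with the compactified moduli spaces serving as the combinatorial building blocks of their boundaries, constitute a CW decomposition of $M$ whose open cells are the unstable manifolds $W^u_Y(q)$ and whose characteristic maps are given by the explicit gluing formulas proved there. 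This is precisely the content of Theorem \ref{theorem_cw_k(a)} specialized to $Y$.

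Second, I would transport this structure along $h$. Because $h$ identifies critical points of $X$ with critical points of $Y$ and sends $X$-trajectories to $Y$-trajectories, it restricts to a homeomorphism $W^u_X(p)\to W^u_Y(h(p))$ for each critical point $p$. The crucial observation is that $h$ is compatible with the passage to broken trajectories: a sequence of points lying on $X$-trajectories that converges to a broken $X$-trajectory is carried by $h$ to the analogous configuration for $Y$, so $h$ extends to a homeomorphism $\overline{W^u_X(p)}\to\overline{W^u_Y(h(p))}$ respecting the stratification by compactified moduli spaces. Pulling the CW decomposition of $M$ coming from $Y$ back through $h^{-1}$ therefore yields a CW decomposition of $M$ whose open cells are exactly the unstable manifolds of $X$, and whose characteristic maps are the explicit maps for $Y$ precomposed with $h^{-1}$ together with the induced identifications on the compactified moduli spaces; this is the explicit formula asserted in Theorem \ref{theorem_B}. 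Since $h$ is a homeomorphism of $M$, the resulting CW complex is homeomorphic to $M$.

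The step I expect to be the main obstacle is the compatibility of the topological equivalence $h$ with the compactifications in the second paragraph. A priori $h$ is only a homeomorphism of $M$ taking trajectories to trajectories, so it is not obvious that it is a morphism of the relevant manifolds-with-corners structures on the compactified unstable manifolds and moduli spaces, which is what one needs in order to carry the cell structure and the explicit characteristic maps over faithfully. Controlling this requires analyzing the local behaviour of $h$ near the critical points and near broken trajectories, which is exactly where the strength of Theorem \ref{theorem_A} — a regular path connecting $X$ to a \emph{locally trivial} $Y$, rather than merely a topological equivalence — enters, and where the bulk of the work lies.
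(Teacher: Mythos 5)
Your proposal is correct and follows the paper's own strategy: Theorem \ref{theorem_A} produces a locally trivial $Y$ and a topological equivalence $h$, the CW result from \cite{qin} applies to $Y$, $h$ extends to a homeomorphism of the compactified unstable manifolds (this is Theorem \ref{theorem_topological_equivalence} in the paper, and it is a purely topological matter — no compatibility with manifold-with-corners structures is needed for the CW conclusion, so the obstacle you flag is milder than you anticipate), and the cell structure is transported back along $h^{-1}$. The one refinement worth making explicit is that the pulled-back characteristic maps, which a priori involve $h$ and so look non-explicit, in fact coincide with the intrinsic endpoint-evaluation map $e\colon \overline{\mathcal{D}(p;X)} \to M$, $(\Gamma,x)\mapsto x$, via the commutativity $e_{Y}\circ h_{*} = h\circ e_{X}$; it is this identification that yields the promised explicit formula.
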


The following is the reason for making the extension of results in
\cite{qin}. There are at least two disadvantages of the locally
trivial metric assumed in \cite{qin}. Firstly, local triviality is
not a generic property. Sometimes, especially in the infinite
dimensional setting such as in Floer theory, it is not usually
the case that one can find a metric satisfying both the local
triviality and transversality conditions. Secondly, the assumption
of local triviality of the metric contradicts symmetry. Take for
example a homogeneous Riemannian manifold. If the metric is locally
trivial, then the curvature tensor must vanish near each critical
point. Since the metric is homogeneous, the curvature tensor must
vanish globally. Thus only a tiny class of homogeneous Riemannian
manifolds have this type of metric.

Actually, the local triviality assumption on the metric was made in
\cite{qin} exclusively because of the techniques employed there. The
theorems in \cite[thm.\ 3.3, 3.4, and 3.5]{qin} show that, under the
assumption of a locally trivial metric, the compactified moduli
spaces have smooth structures compatible with that of the underlying
manifold. However, the example in \cite[example 3.1]{qin} shows
that, if the metric is not locally trivial, there is no such
compatiblity (see also Remark \ref{remark_embed_m(p,q)}). Thus the
case of a locally trivial metric has several distinct features from
the general case. In fact, the proofs of \cite[thm.\ 3.7 and
3.8]{qin} rely heavily on the compatibility.

In this situation, it's natural to pose the following strategy for
obtaining results about Morse moduli spaces in the case of a general
metric. As a first step, we implement the subtle and technical
arguments in the special case. In the second and final step, we try
to convert the general case to the special case. The paper
\cite{qin} completes the first step. This paper achieves the second
one.

Franks' paper \cite[prop. 1.6]{franks} proposes an excellent idea to
reduce the general case to the special case as follows. The proof of
\cite[lem.\ 2]{newhouse_peixoto} claims that there exists a regular
path (i.e. each single vector field on the path satisfies
transversality) as the one stated in Theorem \ref{theorem_A}. Since
a negative gradient-like vector field satisfying transversality is
structurally stable, we get the topological equivalence in Theorem
\ref{theorem_A}, which converts the general vector field $X$ to the
locally trivial $Y$. (The argument in \cite{franks} also shows the
power of Theorem \ref{theorem_A}.)

However, the proof in \cite{newhouse_peixoto} does not provide sufficient details. It's well known that, for negative
gradient-like vector fields, transversality is preserved under small
$C^{1}$ perturbations. However, the vector fields certainly change
largely in the $C^{1}$ topology along the above path. \textit{How
can we guarantee the transversality?} Franks' paper \cite{franks}
refers the proof to \cite{newhouse_peixoto}, and the latter outlines
the construction of the path. Both \cite{franks} and
\cite{newhouse_peixoto} indicate that the $\lambda$-Lemma in
\cite{palis} verifies the transversality. Unfortunately, none of
them explain \textit{why} the $\lambda$-Lemma works in this setting.

The current paper supports the above idea in
\cite{newhouse_peixoto}. Precisely, following this idea, we shall
give a self-contained and detailed proof of Theorem
\ref{theorem_regular_path}. However, the statement of Theorem
\ref{theorem_regular_path} is slightly different from that in
\cite{newhouse_peixoto} such that it becomes better in the setting
of Morse theory. (Actually, the papers \cite{newhouse_peixoto} and
\cite{franks} emphasize the setting of dynamical systems. However,
our argument also proves the result in \cite{newhouse_peixoto}. See
Remark \ref{remark_path}.)

The main body of this paper consists of two parts. The first part,
Sections \ref{section_morse_lemma}-\ref{section_reduction_lemma},
consists of preparations for the application of topological
equivalence. The main theorems in it are Theorems
\ref{theorem_Morse} and \ref{theorem_regular_path}, which may be of
independent interest. The second part consists of the subsequent
sections and gives the application of topological equivalence.
Theorem \ref{theorem_topological_equivalence} shows that the
compactified moduli spaces are invariants of topological
equivalence, which is the base for our application. The theorems in
Sections \ref{section_property_moduli_spaces}-\ref{section_CW} are
extensions of those in \cite{qin}.

%--------------------------------------------------------------------------------------------------------------------
%--------------------------------------------------------------------------------------------------------------------

\renewcommand\thetheorem{\arabic{theorem}}
\numberwithin{theorem}{section}

\section{Preliminaries}\label{section_preliminaries}
In this section, we give some definitions, notation and elementary
results mostly used throughout the paper.

Suppose $M$ is a finite dimensional smooth manifold, and $f$ is a
proper Morse function on $M$. Let $M^{a,b}$ denote $f^{-1}([a,b])$.
Let $M^{a}$ denote $f^{-1}((-\infty, a])$.

\begin{definition}\label{definition_gradient_like}
A vector field $X$ is negative gradient-like for $f$ if $Xf(x) <
0$ when $x$ is not a critical point, and, near each critical point
$p$, $X$ is the negative gradient of $f$ for some metric.
\end{definition}

By Definition \ref{definition_gradient_like}, every negative gradient vector
field is obviously a negative gradient-like vector field. On the contrary,
Smale \cite[remark after thm.\ B]{smale1} gives the following fact
(see also \cite[lem.\ 7.12]{qin}).

\begin{lemma}\label{lemma_gradient}
Every negative gradient-like field of a Morse function $f$ is
actually a negative gradient field of $f$ for some metric.
\end{lemma}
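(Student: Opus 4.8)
The plan is to exhibit a Riemannian metric $g$ on $M$ satisfying $g(X,\cdot)=-df$, which is precisely the assertion that $X=-\nabla_g f$. The observation that makes everything soft is that this identity is an \emph{affine} (in fact linear) condition on $g$: if two metrics $g_0,g_1$ satisfy $g_i(X,\cdot)=-df$ on an open set, then so does every convex combination $tg_0+(1-t)g_1$, and a convex combination of positive-definite forms is again positive definite. Hence it will suffice to cover $M$ by open sets on each of which \emph{some} metric satisfying the equation exists, and then glue these local metrics by a smooth partition of unity $\{\rho_i\}$ subordinate to the cover: the resulting $g=\sum_i\rho_i g_i$ is, pointwise, a finite convex combination of positive-definite forms (coefficients summing to $1$), hence positive definite, and $g(X,\cdot)=\sum_i\rho_i\,g_i(X,\cdot)=\sum_i\rho_i(-df)=-df$.

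So the work reduces to producing the local pieces. Near a critical point $p$, the hypothesis that $X$ is negative gradient-like gives, by Definition \ref{definition_gradient_like} itself, a neighborhood $U_p$ and a metric $g_p$ on $U_p$ with $X=-\nabla_{g_p}f$; nothing further is needed there. Away from the critical set I would construct the local metric by hand. If $x_0$ is not critical, then $df_{x_0}\neq 0$ and $(Xf)(x_0)<0$, so on a small neighborhood $V$ of $x_0$ (shrunk to avoid all critical points) the function $Xf$ is negative, $X$ is nowhere zero, and $X$ is transverse to the hyperplane field $\ker df$, giving a splitting $TM|_V=\mathbb{R}X\oplus\ker df$. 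Fix any background metric $h$ on $M$ and define $g$ on $V$ by declaring $\mathbb{R}X\perp\ker df$, $g|_{\ker df}=h|_{\ker df}$, and $g(X,X)=-Xf$ (a positive smooth function on $V$). This $g$ is smooth and positive definite, and for $v=aX+w$ with $w\in\ker df$ one computes $g(X,v)=a\,g(X,X)=-a\,(Xf)=-df(aX)=-df(v)$, using $df(w)=0$. Thus $g(X,\cdot)=-df$ on $V$.

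Assembling these over the cover $\{U_p\}\cup\{V\text{'s}\}$ as in the first paragraph yields a global metric $g$ with $g(X,\cdot)=-df$, equivalently $\nabla_g f=-X$, which is the claim. I do not expect a real obstacle here: the argument is entirely of partition-of-unity type. The only step requiring a small idea is the off-critical-set construction — exploiting the transversality of $X$ to $\ker df$ to split the tangent bundle and prescribe $g$ on each summand — together with the (elementary but essential) remark that the gradient equation is convex in the metric, which is what legitimizes the glueing while keeping positive-definiteness.
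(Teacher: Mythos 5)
Your proof is correct. The paper does not actually prove Lemma \ref{lemma_gradient} but cites it to Smale \cite{smale1} (see also \cite[lem.\ 7.12]{qin}); your argument — exploiting that $g(X,\cdot)=-df$ is an affine condition in $g$, producing local metrics (from Definition \ref{definition_gradient_like} near critical points, and from the splitting $TM=\mathbb{R}X\oplus\ker df$ with $g(X,X)=-Xf$ away from them), and gluing by a partition of unity — is precisely the standard proof of this fact.
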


By the Morse Lemma, there exists a local coordinate chart near a
critical point $p$ such that $p$ has the coordinate $(0,0)$, and the
function has the form
\begin{equation}\label{morse_chart}
  f(x_{1}, x_{2}) = f(p) - \frac{1}{2} \langle x_{1}, x_{1} \rangle
  + \frac{1}{2} \langle x_{2}, x_{2} \rangle
\end{equation}
in this chart. We call this chart a Morse chart.

\begin{definition}\label{definition_locally_trivial}
We say the metric of $M$ is trivial near $p$ if the metric of $M$
coincides with the standard metric of a Morse chart near $p$. In
other words, in this Morse chart, $- \nabla f$ has the simplest form
$- \nabla f(x_{1}, x_{2}) = (x_{1}, -x_{2})$. Similarly, we say a
negative gradient-like field $X$ is trivial near $p$ if $X(x_{1},
x_{2}) = (x_{1}, -x_{2})$ in a Morse chart. If the metric (or $X$)
is trivial near each critical point, we say this metric (or $X$) is
locally trivial.
\end{definition}

\begin{remark}
Some papers in the literature include the local triviality of $X$
into the definition of a gradient-like vector field. We follow the
style of \cite{smale1} and exclude it.
\end{remark}

We omit the proof of the following easy lemma which can be proved by inverse function theorem.
\begin{lemma}\label{lemma_flow_intersection}
Denote by $\phi_{t}(x)$ the flow generated by $X$ with initial value $x$ and time $t$. Suppose $a$ is a regular value of $f$. Suppose $\phi_{t}(x_{0}) \in f^{-1}(a)$ for some $t$. Then there exists a neighborhood $U$ of $x_{0}$ such that, for all $y \in U$, $\phi_{t(y)}(y) \in f^{-1}(a)$ for a unique $t(y)$. Furthermore, $t(y)$ and $\phi_{t(y)}(y)$ are smooth with respect to $y$.
\end{lemma}

\begin{definition}\label{definition_invariant_manifold}
Let $\phi_{t}(x)$ be the flow generated by  $X$ with initial value
$x$ and time $t$. Suppose $p$ is a critical point. Define the descending manifold
of $p$ as $\mathcal{D}(p) = \{ x \in M \mid \displaystyle \lim_{t
\rightarrow - \infty} \phi_{t}(x) = p \}$. Define the ascending
manifold of $p$ as $\mathcal{A}(p) = \{ x \in M \mid \displaystyle
\lim_{t \rightarrow + \infty} \phi_{t}(x) = p \}$. We call
$\mathcal{D}(p)$ and $\mathcal{A}(p)$ the invariant manifolds of
$p$. We also let $\mathcal{D}(p;X)$ denote $\mathcal{D}(p)$ and $\mathcal{A}(p;X)$
denote $\mathcal{A}(p)$ in order to indicate
the vector field $X$.
\end{definition}

Clearly, $\mathcal{D}(p)$ is the unstable manifold of $p$ with
respect to $X$, and $\mathcal{A}(p)$ is the stable manifold. They
are smoothly embedded open disks in $M$. Furthermore,
$\dim (\mathcal{D}(p)) = \mathrm{ind}(p)$, where $\mathrm{ind}(p)$
is the Morse index of $p$. (See e.g. \cite[lem.\ 3.8]{smale3})

\begin{definition}\label{definition_transversality}
We say that $X$ satisfies transversality if $\mathcal{D}(p)$ and
$\mathcal{A}(q)$ are transversal for all critical points $p$ and
$q$. For critical points $p$ and $q$, we say that $p$ and $q$ are
transversal if the invariant manifolds of $p$ are transverse to
those of $q$. Suppose $U$ is a subset of $M$, and these invariant
manifolds meet transversally at each point in $U$ (this includes the
case that they don't meet at that point). We say that $p$ and $q$
are transversal in $U$.
\end{definition}

The following lemma is obvious.

\begin{lemma}\label{lemma_local_transversal}
If $p$ and $q$ are transversal in $f^{-1}((a,b))$ and $p \in
f^{-1}((a,b))$, then $p$ and $q$ are transversal. If $p$ and $q$ are
transversal in $f^{-1}(a)$ and $f(q) < a < f(p)$, then $p$ and $q$
are transversal.
\end{lemma}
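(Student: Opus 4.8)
The plan is to lean on two elementary facts about the flow $\phi_t$ generated by $X$: the submanifolds $\mathcal{D}(p)$ and $\mathcal{A}(q)$ are invariant sets of $\phi_t$, and $\phi_t$ is a diffeomorphism of $M$, so its differential carries a transverse intersection of $\mathcal{D}(p)$ with $\mathcal{A}(q)$ at a point $x$ to a transverse intersection at $\phi_t(x)$, and conversely. Hence transversality of these two submanifolds holds at $x$ if and only if it holds at every point of the orbit of $x$. The whole argument then reduces to showing that each point of $\mathcal{D}(p) \cap \mathcal{A}(q)$ (and of $\mathcal{A}(p) \cap \mathcal{D}(q)$) can be flowed into the prescribed region, after which the hypothesis supplies transversality there and we transport it back by the inverse of the flow.

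First I would observe that it is enough to treat the pairs $\bigl(\mathcal{D}(p),\mathcal{A}(q)\bigr)$ and $\bigl(\mathcal{A}(p),\mathcal{D}(q)\bigr)$: a point lying in $\mathcal{D}(p) \cap \mathcal{D}(q)$ or in $\mathcal{A}(p) \cap \mathcal{A}(q)$ would have to converge under the flow to both $p$ and $q$ in the same time direction, forcing $p = q$, and the resulting self-intersection carries no new content (in the first statement it is anyway covered directly, since $p \in f^{-1}((a,b))$). Reversing the flow interchanges the two pairs, so it suffices to fix $x \in \mathcal{D}(p) \cap \mathcal{A}(q)$.

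For the first statement, $x \in \mathcal{D}(p)$ gives $\phi_t(x) \to p$ as $t \to -\infty$, hence $f(\phi_t(x)) \to f(p)$ by continuity; since $f(p) \in (a,b)$, the point $\phi_t(x)$ lies in $f^{-1}((a,b))$ for all sufficiently negative $t$, where $\mathcal{D}(p)$ and $\mathcal{A}(q)$ are transverse by hypothesis, and applying $d\phi_{-t}$ transports transversality to $x$. For the second statement, $f(q) < a < f(p)$ forces $p \neq q$, so $x$ is a regular point and $t \mapsto f(\phi_t(x))$ is strictly decreasing with limits $f(p) > a$ as $t \to -\infty$ and $f(q) < a$ as $t \to +\infty$; by the intermediate value theorem there is a (unique) $t_0$ with $\phi_{t_0}(x) \in f^{-1}(a)$, and the identical argument concludes. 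There is no genuine obstacle; the only points needing care are the bookkeeping of which of the four intersections can be nonempty and the (immediate) fact that the diffeomorphism $\phi_t$ preserves the transversality condition.
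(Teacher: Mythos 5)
Your argument is correct: transversality of invariant manifolds is carried along orbits by the diffeomorphisms $\phi_t$, and every relevant intersection point can be flowed into $f^{-1}((a,b))$ (respectively, onto $f^{-1}(a)$ via monotonicity of $f$ along nonconstant flow lines and the intermediate value theorem). The paper offers no proof at all --- it simply declares the lemma obvious --- and your flow-transport reasoning is precisely the intended justification, so nothing further is needed.
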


\begin{definition}\label{definition_partial_order}
Suppose $p$ and $q$ are critical points. Define $q \preceq p$ if
there exists a flow from $p$ to $q$. Define $q \prec p$ if
$q \preceq p$ and $q \neq p$.
\end{definition}

If $X$ satisfies transversality, then ``$\preceq$" is a partial
order on the set consisting of all critical points (see \cite[p.\
85, cor.\ 1]{palis_de}).

Now we introduce the definitions of topological conjugacy and
topological equivalence in dynamical systems. The reader is to be
forewarned that the definitions appearing in the literature are not
uniform. We follow the terminology of \cite[p.\ 26]{palis_de}. In
this paper, a topological conjugacy is a relation strictly stronger
than a topological equivalence. This is \textit{different} from the
definition in \cite{franks}. The ``topological conjugacy" in
\cite[p.\ 201]{franks} is actually the ``topological equivalence" in
this paper. Although a topological equivalence is good enough for
our application to Morse theory, we still introduce the notion of
topological conjugacy in order to make the statement of Theorem
\ref{theorem_regular_path} stronger.

\begin{definition}\label{definition_topological_equivalence}
Suppose $X_{i}$ ($i=1,2$) is a vector field on $M_{i}$ and
$\phi^{i}_{t}$ is the flow generated by $X_{i}$. Suppose $h: M_{1}
\rightarrow M_{2}$ is a homeomorphism. If $h \phi^{1}_{t} =
\phi^{2}_{t} h$, then we call $h$ a topological conjugacy between
$X_{1}$ and $X_{2}$. If $h$ maps the orbits of $X_{1}$ to the orbits
of $X_{2}$ and $h$ preserves the directions of orbits, then we call
$h$ a topological equivalence between $X_{1}$ and $X_{2}$.
\end{definition}

\begin{remark}
In dynamical systems, people usually consider the topological
equivalence (or conjugacy) of vector fields on one manifold $M$,
i.e. $M_{1} = M_{2}$ in Definition
\ref{definition_topological_equivalence}. However, it seems
beneficial for topology to allow that $M_{1}$ is not diffeomorphic
to $M_{2}$. For example, choose a standard sphere $S^{n}$ and a
twist sphere $\Sigma^{n}$. Let $f_{1}$ and $f_{2}$ be the height
functions on $S^{n}$ and $\Sigma^{n}$ respectively. We can define a
topological conjugacy between $- \nabla f_{1}$ and $- \nabla f_{2}$
as follows. Choose a homeomorphism (or even a diffeomorphism) $h_{0}
: S^{n-1} \rightarrow \Sigma^{n-1}$, where $S^{n-1}$ and
$\Sigma^{n-1}$ are the equators of $S^{n}$ and $\Sigma^{n}$
respectively. Define $h$ such that $h \phi^{1}_{t}(x) = \phi^{2}_{t}
h_{0} (x)$ for all $x \in S^{n-1}$, and $h$ maps the maximum
(minimum) point to the maximum (minimum) point. Clearly, this
topological conjugacy $h$ recovers the Alexander trick.
\end{remark}

%--------------------------------------------------------------------------------------------------------------------
%--------------------------------------------------------------------------------------------------------------------
\section{A Strengthened Morse Lemma}\label{section_morse_lemma}
In this section, we shall present a Strengthened Morse Lemma which
is useful for the proof of Theorem \ref{theorem_regular_path} (See
Remarks \ref{remark_morse_path} and \ref{remark_path}).

Suppose $H$ is a Hilbert space with inner product $\langle \cdot,
\cdot \rangle$, and $U$ is an open subset of $H$. Define a smooth
Riemannian metric (or smooth metric for brevity) on $U$ in the usual
sense. In other words, for each $x \in U$, assign a bounded symmetric
positive definite linear operator $A(x)$ such that $A(x)$ is a
smooth function of $x$. For any $v$ and $w$ in $T_{x} U = H$, define
$\langle v, w \rangle_{G(x)} = \langle A(x) v, w \rangle$.

\begin{theorem}[Strengthened Morse Lemma]\label{theorem_Morse}
Suppose $H$ is a Hilbert space, $U$ is an open neighborhood of $0
\in H$. Suppose $f$ is a smooth Morse function on $U$ with a
critical point $0$, and $G$ is a smooth metric on $U$. Let $-
\nabla_{G} f$ be the negative gradient of $f$ with respect to $G$,
and $\phi_{t}$ be the flow generated by $- \nabla_{G} f$. Suppose $H
= H_{1} \oplus H_{2}$, where $H_{1}$ and $H_{2}$ are the negative
and positive spectral spaces of $\nabla_{G}^{2} f(0)$ respectively.
Then there exist an $\epsilon >0$, an open neighborhood $V$ of $0$ such that $V
\subseteq U$, $B_{1} = \{ x_{1} \in H_{1} \mid \|x_{1}\| < \epsilon
\}$, $B_{2} = \{ x_{2} \in H_{2} \mid \|x_{2}\| < \epsilon \}$, and
a diffeomorphism $h: B_{1} \times B_{2} \rightarrow V$ such that the
following holds. We have
\begin{equation}\label{theorem_Morse_1}
  h^{*}f(x_{1},x_{2}) = f(0) - \frac{1}{2} \langle x_{1}, x_{1} \rangle
  + \frac{1}{2} \langle x_{2}, x_{2} \rangle,
\end{equation}
\begin{eqnarray*}
   h(B_{1})  & = &  \mathcal{D}_{V}(0; - \nabla_{G} f) = \{ x \in V \mid \phi((-\infty, 0], x) \subseteq V \} \\
   & = & \left \{ x \in V \mid \phi((-\infty, 0], x) \subseteq V, \lim_{t \rightarrow - \infty}
   \phi(t, x) = 0 \right \},
\end{eqnarray*}
and
\begin{eqnarray*}
   h(B_{2})  & = &  \mathcal{A}_{V}(0; - \nabla_{G} f) = \{ x \in V \mid \phi([0, +\infty), x) \subseteq V \} \\
   & = & \left \{ x \in V \mid \phi([0, +\infty), x) \subseteq V, \lim_{t \rightarrow + \infty}
   \phi(t, x) = 0 \right \}.
\end{eqnarray*}
\end{theorem}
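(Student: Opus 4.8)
The plan is to prove the result in two stages: first establish the standard Morse Lemma to obtain coordinates in which $f$ has the quadratic normal form \eqref{theorem_Morse_1}, and then refine those coordinates so that they simultaneously encode the dynamics of $-\nabla_G f$ near $0$, i.e.\ so that the coordinate subspaces $B_1 \times \{0\}$ and $\{0\} \times B_2$ become the local descending and ascending sets.

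For the first stage I would apply the usual parametrized version of the Morse Lemma (the Morse–Palais Lemma in the Hilbert setting). Since $0$ is a nondegenerate critical point of $f$, after a linear change of variables identifying $H = H_1 \oplus H_2$ with the negative and positive spectral spaces of the Hessian $\nabla_G^2 f(0)$, there is a local diffeomorphism $\psi$ fixing $0$ with $\psi^* f(x_1,x_2) = f(0) - \tfrac12\langle x_1,x_1\rangle + \tfrac12\langle x_2,x_2\rangle$ on some polydisc. This part is entirely standard and I would simply cite it.

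For the second stage, the point is that after the Morse Lemma, the function has the right form but the \emph{metric} (hence the gradient flow) need not be split; the flow lines of $-\nabla_G f$ need not lie in the coordinate subspaces. The key tool is the stable/unstable manifold theorem for the hyperbolic rest point $0$ of $-\nabla_G f$: the local unstable set $\mathcal{D}_V(0)$ is a smoothly embedded disk tangent to $H_1$ at $0$, and likewise $\mathcal{A}_V(0)$ is tangent to $H_2$. I would then straighten these two manifolds by a further local diffeomorphism $\eta$ that carries $\mathcal{D}_V(0)$ onto an open subset of $H_1 \times \{0\}$ and $\mathcal{A}_V(0)$ onto an open subset of $\{0\} \times H_2$, while preserving $0$. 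The composition $h$ of (the inverse of) $\psi$ with $\eta^{-1}$, suitably restricted to a product neighbourhood $B_1 \times B_2$, then has $h(B_1) = \mathcal{D}_V(0)$ and $h(B_2) = \mathcal{A}_V(0)$. The three equivalent descriptions of these sets follow: the containment ``orbit stays in $V$ for negative time'' forces convergence to $0$ because, in Morse coordinates, $f$ is strictly decreasing along nonconstant orbits and $V$ is bounded, so the $\omega$-limit (resp.\ $\alpha$-limit) set is a nonempty connected invariant subset of the critical set $\{0\}$.

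The main obstacle is making the two straightening steps compatible: after applying the Morse Lemma I have fixed the coordinates up to the quadratic form, and I must then straighten $\mathcal{D}_V(0)$ and $\mathcal{A}_V(0)$ \emph{without destroying} \eqref{theorem_Morse_1}. The resolution is that the quadratic form $f(0) - \tfrac12\|x_1\|^2 + \tfrac12\|x_2\|^2$ is invariant under any diffeomorphism of the form $(x_1,x_2) \mapsto (u(x_1,x_2), v(x_1,x_2))$ with $\|u\|^2 - \|v\|^2 = \|x_1\|^2 - \|x_2\|^2$; in particular, since $\mathcal{D}_V(0) \subseteq f^{-1}((-\infty,f(0)])$ is a graph $x_2 = \sigma(x_1)$ over (a neighbourhood in) $H_1$ with $\|\sigma(x_1)\|^2 = \|x_1\|^2 - 2(f(0)-f)$ determined by $f$, and symmetrically $\mathcal{A}_V(0)$ is a graph $x_1 = \tau(x_2)$, one can write down an explicit fibre-preserving change of variables that flattens both graphs and leaves the quadratic form untouched. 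Checking that this map is a diffeomorphism near $0$ and that it can be chosen so that the images are honest products $B_1 \times B_2$ is the technical heart; everything else is bookkeeping with the flow. I would carry out the construction of this explicit change of variables, verify its regularity at $0$ via its derivative (which is the identity on the respective factors by the tangency of $\mathcal{D}_V(0)$ and $\mathcal{A}_V(0)$), and then shrink to obtain $V$, $\epsilon$, $B_1$, $B_2$, and $h$.
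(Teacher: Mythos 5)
You have correctly isolated the real difficulty of your ordering --- after normalizing $f$ you must straighten the local invariant manifolds by a diffeomorphism that \emph{exactly} preserves $q(x_1,x_2)=-\tfrac12\|x_1\|^2+\tfrac12\|x_2\|^2$ --- but your proposed resolution of that difficulty is a genuine gap. The identity $\|\sigma(x_1)\|^2=\|x_1\|^2-2(f(0)-f)$ that you invoke is vacuous: once $f$ is in normal form it holds for \emph{every} graph, so it gives no control on $\sigma$ and no handle for a construction. Moreover, the natural readings of your ``explicit fibre-preserving change of variables'' do not deliver the map. If ``fibre-preserving'' means preserving the fibres of the projection to $H_1$, i.e.\ $\eta(x_1,x_2)=(x_1,v(x_1,x_2))$, then $q\circ\eta=q$ forces $\|v(x_1,x_2)\|=\|x_2\|$ on every fibre, so $\eta$ can never carry $(x_1,\sigma(x_1))$ with $\sigma(x_1)\neq0$ to $(x_1,0)$: such maps cannot flatten the graph at all. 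If instead you mean sliding along the level sets of $q$ (the only $q$-preserving moves available), the sliding amount depends on the level through $\sqrt{\mp 2q}$ and on where $\mathcal{D}_V(0)$ (resp.\ $\mathcal{A}_V(0)$) meets that level, and the entire problem becomes smoothness of the resulting map across the critical level $\{q=0\}$ and at the origin; this is precisely the ``technical heart'' you defer, and nothing in the outline shows such an $\eta$ exists, let alone explicitly. (A posteriori such a map does exist --- it is the theorem itself composed with the Morse--Palais chart --- so your route is circular as it stands.)

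The paper avoids this by performing the two steps in the opposite order. It first straightens the invariant manifolds, applying Irwin's local invariant manifold theorem to the time-one map $\phi_1$ to get a chart in which $\mathcal{D}_{U_0}(0)$ and $\mathcal{A}_{U_0}(0)$ are the coordinate factors, and normalizes $f$ on each factor separately by the ordinary Morse Lemma (these changes preserve the factors). The remaining error term $R$ is then killed by the explicit map $h_2(x)=(C_1(x)x_1,C_2(x)x_2)$, where $C_1,C_2$ are smooth symmetric positive square roots of $I-R_1(x)$, $I+R_2(x)$ produced by a Taylor-type integral expansion that uses $D^2_{1,2}(h_1^{*}f)(0)=0$ (the spectral splitting of the Hessian with respect to $G$). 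The crucial point is that $h_2$ manifestly preserves both coordinate subspaces, so the already-straightened invariant manifolds survive the function normalization; in your order there is no comparably simple family of $f$-preserving diffeomorphisms with which to straighten the manifolds, which is exactly where your argument is incomplete. The simplest repair is to reverse the order of your two stages as the paper does (or to supply, with a smoothness proof at $\{q=0\}$ and at $0$, the level-set--sliding map you allude to).
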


Before proving it, we explain the statement of Theorem
\ref{theorem_Morse}. In this theorem, $\mathcal{D}_{V}(0; -
\nabla_{G} f)$ is the local unstable (descending) manifold of $0$ in
the neighborhood $V$, and $\mathcal{A}_{V}(0; - \nabla_{G} f)$ is
the local stable (ascending) manifold. They certainly depend on the
metric. The classical Morse Lemma shows that, by a coordinate
transformation $h$, we get a new chart (we call it a Morse Chart)
such that the function has the form (\ref{theorem_Morse_1}) in it.
Theorem \ref{theorem_Morse} tells us more: No matter what the metric
is, there exists a Morse chart such that the local invariant
manifolds are standard in it. (Figure \ref{figure_morse_lemma}
illustrates this strengthened Morse chart, where the arrows indicate
the directions of the flows.) This makes three objects, i.e. the
function, the local invariant manifolds, and the coordinate chart
fit well. In short, Theorem \ref{theorem_Morse} strengthens the
classical Morse Lemma by taking the dynamical system into account.

\begin{figure}[!htbp]
\centering
\includegraphics[scale=0.4]{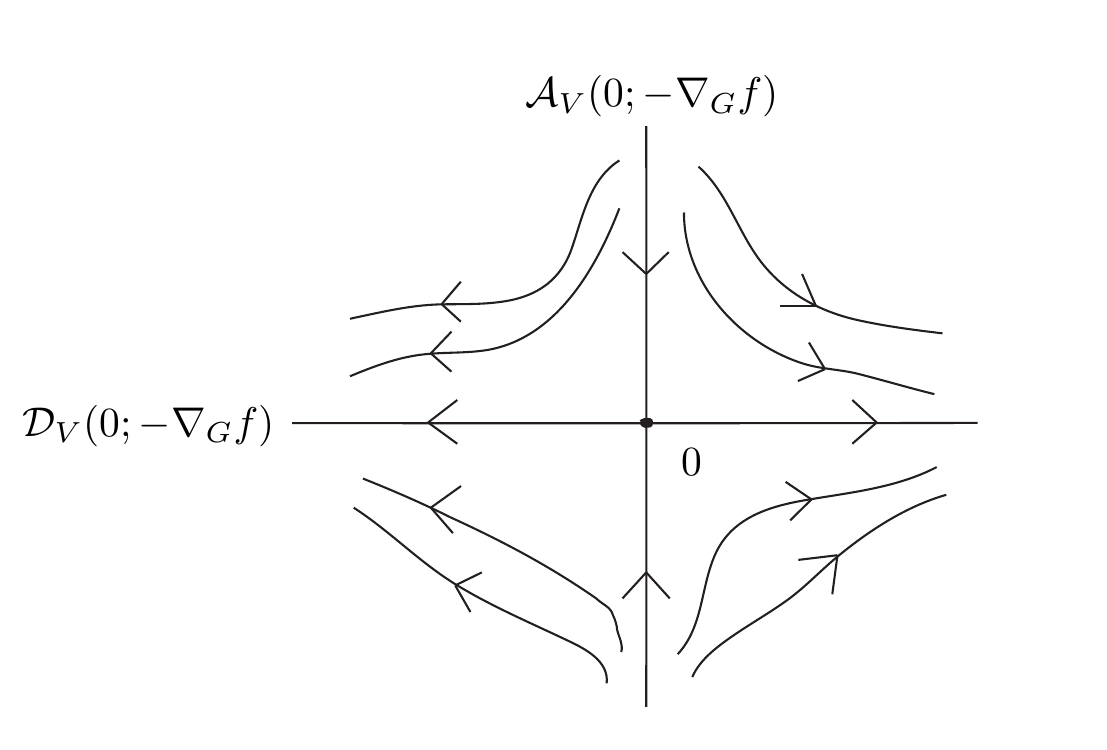} \caption{Strengthened Morse Chart}
\label{figure_morse_lemma}
\end{figure}

\begin{proof}[Proof of Theorem \ref{theorem_Morse}]
We know that $\phi_{1}$ is a smooth map defined on $U_{0}$ with a
hyperbolic fixed point $0$, where $U_{0}$ is a neighborhood of $0$.
By the Local Invariant Manifold Theorem (see \cite{irwin1} and
\cite[thm.\ 28]{irwin2}), shrinking $U_{0}$ suitably, there exists a
diffeomorphism $h_{1}: \widetilde{B}_{1} \times \widetilde{B}_{2}
\rightarrow U_{0}$ such that
\begin{eqnarray*}
   h_{1}(\widetilde{B}_{1})  & = &  \mathcal{D}_{U_{0}}(0; \phi_{1}) = \{ x \in U_{0} \mid \forall n \leq 0, (\phi_{1})^{n}(x) \in U_{0} \} \\
   & = & \left \{ x \in U_{0} \mid \forall n \leq 0, (\phi_{1})^{n}(x) \in U_{0}, \lim_{n \rightarrow - \infty}
   (\phi_{1})^{n}(x) = 0 \right \},
\end{eqnarray*}
$h_{1}(\widetilde{B}_{2}) = \mathcal{A}_{U_{0}}(0; \phi_{1})$, and $h_{1} (0) =0$.
Here the definition of $\mathcal{A}_{U_{0}}(0; \phi_{1})$ is similar
to that of $\mathcal{D}_{U_{0}}(0; \phi_{1})$, $\widetilde{B}_{1} \times \widetilde{B}_{2} \subseteq H_{1} \times
H_{2}$, and $\widetilde{B}_{1} \times \widetilde{B}_{2}$ is an open neighborhood of $(0,0)$.

Clearly, $h_{1}^{*}f|_{\widetilde{B}_{1}}$ and
$h_{1}^{*}f|_{\widetilde{B}_{2}}$ are Morse functions on
$\widetilde{B}_{1}$ and $\widetilde{B}_{2}$ respectively. By the
Morse Lemma, composing $h_{1}$ with a diffeomorphism if necessary,
we may assume that
\[
  h_{1}^{*}f|_{\widetilde{B}_{1}} = f(0) - \frac{1}{2} \langle x_{1}, x_{1}
  \rangle, \qquad
  h_{1}^{*}f|_{\widetilde{B}_{2}} = f(0) + \frac{1}{2} \langle x_{2}, x_{2}
  \rangle.
\]

Define
\[
  R(x) = h_{1}^{*}f(x) - \left( f(0) - \frac{1}{2} \langle x_{1}, x_{1} \rangle
  + \frac{1}{2} \langle x_{2}, x_{2} \rangle \right).
\]
Here $x=(x_{1},x_{2})$. Denote the differential of $R$ with order
$n$ by $D^{n} R$. Then $R(x_{1},0) \equiv 0$ and $R(0,x_{2}) \equiv
0$. In addition, since $D f(0) =0$, for any $v_{1} \in H_{1}$ and $v_{2} \in H_{2}$, we
have
\begin{eqnarray*}
   D^{2}(h_{1}^{*}f)(0)(v_{1}, v_{2})  & = &  D^{2}f(Dh_{1} (0) \cdot v_{1}, Dh_{1} (0) \cdot v_{2}) \\
   & = & \langle \nabla_{G}^{2} f(0) Dh_{1} (0) \cdot v_{1}, Dh_{1} (0) \cdot v_{2}
   \rangle_{G(0)}.
\end{eqnarray*}
By the Local Invariant Manifold Theorem again, we have the tangent spaces
\[
T_{0} \mathcal{D}_{U_{0}}(0; \phi_{1}) =H_{1} \qquad \text{and} \qquad T_{0} \mathcal{A}_{U_{0}}(0; \phi_{1}) =H_{2}.
\]
Therefore $Dh_{1} (0) \cdot v_{1} \in H_{1}$ and $Dh_{1} (0) \cdot v_{2} \in
H_{2}$. By the assumption, $\nabla_{G}^{2} f(0)$ is symmetric with respect to $G(0)$,
and $H_{1}$ and $H_{2}$ are negative and positive spectral spaces of
$\nabla_{G}^{2} f(0)$ respectively. Thus
$D^{2}(h_{1}^{*}f)(0)(v_{1}, v_{2}) = 0$. We infer $D^{2}R(0)(v_{1},
v_{2}) = 0$ and $D^{2}_{1,2}R(0) = 0$, where $D^{2}_{1,2} R (0)$ is the restriction of $D^{2}R(0)$ on $H_{1} \times H_{2}$.

Now we have
\begin{eqnarray*}
   &   & R(x_{1}, x_{2}) \\
   & = & R(x_{1},0) + \int_{0}^{1} \frac{d}{dt} R(x_{1}, t x_{2})dt \\
   & = & \int_{0}^{1} D_{2} R(x_{1}, t x_{2})dt \cdot x_{2} \quad
   (\text{because $R(x_{1},0) = 0)$} \\
   & = & \int_{0}^{1} \left[ D_{2} R(0, t x_{2}) + \int_{0}^{1} \frac{d}{ds} D_{2} R(s x_{1}, t x_{2}) ds
   \right] dt \cdot x_{2} \\
   & = & \int_{0}^{1} \int_{0}^{1} D^{2}_{1,2} R(s x_{1}, t x_{2})
   ds dt (x_{1}, x_{2}) \quad (\text{because $D_{2}R(0, t x_{2}) = 0$}) \\
   & = & \int_{0}^{1} \int_{0}^{1} \left[ D^{2}_{1,2} R(0, 0) + \int_{0}^{1} \frac{d}{d \tau}
   D^{2}_{1,2} R(\tau s x_{1}, \tau t x_{2}) d \tau \right] ds dt (x_{1}, x_{2}) \\
   & = & \int_{0}^{1} \int_{0}^{1} \int_{0}^{1} s D^{3}_{1,1,2} R(\tau s x_{1}, \tau t
   x_{2}) d \tau ds dt (x_{1}, x_{1}, x_{2}) \quad (\text{because $D^{2}_{1,2}R(0) = 0$}) \\
   &   & + \int_{0}^{1} \int_{0}^{1} \int_{0}^{1} t D^{3}_{1,2,2} R(\tau s x_{1}, \tau t
   x_{2}) d \tau ds dt (x_{1}, x_{2}, x_{2}).
\end{eqnarray*}
Since $D^{3} R$ is a bounded symmetric multilinear form, there exist bounded
symmetric operators $R_{1}(x)$ and $R_{2}(x)$ on $H_{1}$ and $H_{2}$
respectively such that, for any $v_{1}$ and $w_{1}$ in $H_{1}$,
\[
  \int_{0}^{1} \int_{0}^{1} \int_{0}^{1} s D^{3}_{1,1,2} R(\tau s x_{1}, \tau t
   x_{2}) d \tau ds dt (v_{1}, w_{1}, x_{2}) = \frac{1}{2} \langle R_{1}(x_{1},
   x_{2}) v_{1}, w_{1} \rangle;
\]
and, for any $v_{2}$ and $w_{2}$ in $H_{2}$,
\[
  \int_{0}^{1} \int_{0}^{1} \int_{0}^{1} t D^{3}_{1,2,2} R(\tau s x_{1}, \tau t
   x_{2}) d \tau ds dt (x_{1}, v_{2}, w_{2}) = \frac{1}{2} \langle R_{2}(x_{1},
   x_{2}) v_{2}, w_{2} \rangle.
\]
Here $R_{1}(x)$ and $R_{2}(x)$ are smooth with respect to $x$.

Clearly, $R_{1}(0) = 0$, $R_{2}(0) = 0$, and
\[
  h_{1}^{*}f(x) = f(0) - \frac{1}{2} \langle (I - R_{1})(x) x_{1}, x_{1} \rangle
  + \frac{1}{2} \langle (I + R_{2})(x) x_{2}, x_{2} \rangle.
\]
Since $I-R_{1}$ and $I+R_{2}$ are symmetric, $I-R_{1}(0)=I$ and
$I+R_{2}(0)=I$, shrinking $\widetilde{B}_{1} \times
\widetilde{B}_{2}$ if necessary, we have $I-R_{1}(x) = C_{1}(x)^{2}$
and $I+R_{2}(x) = C_{2}(x)^{2}$. Here $C_{1}(x)$ and $C_{2}(x)$ are bounded
symmetric and positive definite operators on $H_{1}$ and $H_{2}$
respectively, and they are smooth functions of $x$. Thus
\[
  h_{1}^{*}f(x) = f(0) - \frac{1}{2} \langle C_{1}(x) x_{1}, C_{1}(x) x_{1} \rangle
  + \frac{1}{2} \langle C_{2}(x) x_{2}, C_{2}(x) x_{2} \rangle.
\]
Define $h_{2}: \widetilde{B}_{1} \times \widetilde{B}_{2}
\rightarrow H_{1} \times H_{2}$ by $h_{2}(x) = (C_{1}(x) x_{1},
C_{2}(x) x_{2})$. Then $h_{2}(\widetilde{B}_{1}) \subseteq H_{1}$
and $h_{2}(\widetilde{B}_{2}) \subseteq H_{2}$. Clearly, $C_{1}(0)=I$ and $C_{2}(0)=I$. Since
\[
Dh_{2} (x; v_{1}, v_{2}) = (C_{1}(x) v_{1} + DC_{1} (x; v_{1}, v_{2}) x_{1},  C_{2}(x) v_{2} + DC_{2} (x; v_{1}, v_{2}) x_{2}),
\]
we have
\begin{eqnarray*}
Dh_{2} (0; v_{1}, v_{2}) & = & (C_{1}(0) v_{1} + DC_{1} (0; v_{1}, v_{2}) \cdot 0,  C_{2}(0) v_{2} + DC_{2} (0; v_{1}, v_{2}) \cdot 0) \\
& = & (v_{1}, v_{2})
\end{eqnarray*}
and then $Dh_{2}(0) =I$. There exists $B_{1} \times B_{2} \subseteq H_{1} \times H_{2}$ such that $h_{2}^{-1}$ exists and is smooth on
$B_{1} \times B_{2}$. Then we get
\[
  (h_{1} \circ h_{2}^{-1})^{*} f(x) = f(0) - \frac{1}{2} \langle x_{1}, x_{1} \rangle
  + \frac{1}{2} \langle x_{2}, x_{2} \rangle.
\]
Define $h = h_{1} \circ h_{2}^{-1}$ and $V = h(B_{1}
\times B_{2})$.

We see that $h^{-1}(\mathcal{D}_{U_{0}}(0; \phi_{1})) = B_{1}$ and
$h^{-1}(\mathcal{A}_{U_{0}}(0; \phi_{1})) = B_{2}$. It is straightforward to prove that
$h(B_{1}) = \mathcal{D}_{V}(0; - \nabla_{G} f)$ and $h(B_{2}) =
\mathcal{A}_{V}(0; - \nabla_{G} f)$.
\end{proof}

%--------------------------------------------------------------------------------------------------------------------
%--------------------------------------------------------------------------------------------------------------------
\section{A Regular Path}\label{section_regular_path}
As mentioned in the Introduction, the purpose of this section is to
present a detailed proof of Theorem \ref{theorem_regular_path} in
order to support an idea outlined in \cite[lem.\
2]{newhouse_peixoto}. In this proof, Lemma \ref{lemma_inclination}
plays a key role.

\begin{theorem}[Regular Path]\label{theorem_regular_path}
Suppose $f$ is a Morse function on a compact manifold $M$. Suppose
$X$ is a negative gradient-like field for $f$, and $X$ satisfies
transversality. Then there is a continuous path $\mathcal{Y}: [0,1]
\rightarrow \mathfrak{X}^{\infty}(M)$ such that, for all $s \in
[0,1]$, $\mathcal{Y}_{s}$ is a negative gradient-like field for $f$,
$\mathcal{Y}_{s}$ satisfies transversality, $\mathcal{Y}_{0} = X$
and $\mathcal{Y}_{1}$ is locally trivial. In particular, there
exists a topological conjugacy $h$ between $X$ and $\mathcal{Y}_{1}$
such that $h(p)=p$ for each critical point $p$. Here
$\mathfrak{X}^{\infty}(M)$ is the set with the Whitney $C^{\infty}$
topology consisting of $C^{\infty}$ vector fields on $M$.
\end{theorem}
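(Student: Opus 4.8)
The plan is to build the path $\mathcal{Y}$ in finitely many stages, processing the critical points of $f$ in order of increasing critical value (after an arbitrarily small perturbation making all critical values distinct, which changes nothing essential since we can perturb $f$ back at the end, or simply handle ties level by level). At each stage I modify $X$ only inside a small Morse chart around one critical point $p$, deforming it there to the standard linear field $(x_1,-x_2)$, while keeping the field unchanged outside a slightly larger neighborhood of $p$. The key structural input is Theorem~\ref{theorem_Morse}: after applying the current field's flow to produce the strengthened Morse chart at $p$, the local descending and ascending manifolds are already the standard coordinate subspaces $B_1\times\{0\}$ and $\{0\}\times B_2$, and $f$ has its standard quadratic form there. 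So the deformation at $p$ is purely a deformation of the vector field \emph{within} a neighborhood in which $f$ already looks standard and the local invariant manifolds are already standard; I interpolate linearly (in the parameter $s$) between the restriction of $X$ to this chart and the model field $(x_1,-x_2)$, using a cutoff in space to glue back to the unchanged $X$ outside. Because $f$ is a decreasing Lyapunov function for every field along the straight-line interpolation (each interpolant is still a negative gradient-like field for $f$, as convex combinations of vectors $v$ with $vf<0$ still satisfy $vf<0$ away from critical points, and near $p$ the interpolant is a convex combination of two negative gradients of $f$ and hence the negative gradient of $f$ for the averaged metric), each $\mathcal{Y}_s$ is automatically a negative gradient-like field. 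Concatenating the finitely many stage-paths (and reparametrizing $[0,1]$) yields a continuous path in $\mathfrak{X}^\infty(M)$ from $X=\mathcal{Y}_0$ to a locally trivial $\mathcal{Y}_1$.

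The substance of the argument is verifying that \emph{every} field on this path satisfies transversality; this is the ``serious issue'' flagged in the Introduction, and I expect it to be the main obstacle. Small $C^1$ perturbations preserve transversality, but the interpolants are \emph{not} small perturbations. The resolution, following the Newhouse--Peixoto strategy and made rigorous via the $\lambda$-Lemma, is an induction on the stage index ordered by critical value. Suppose we are about to process $p$, and transversality already holds for the current field $X'$ (and it agrees with the original $X$, hence is transversal, near all critical points not yet processed; and the already-processed lower critical points sit below $f(p)$). The modification takes place inside a chart $N_p$ around $p$. Crucially, \emph{the modification does not change $X'$ on $\mathcal{D}(p)$ or $\mathcal{A}(p)$ near $p$}: by Theorem~\ref{theorem_Morse} these are the coordinate axes $B_1\times\{0\}$, $\{0\}\times B_2$, and the interpolants $(1-s)X'+s\cdot(x_1,-x_2)$ restrict on $B_1\times\{0\}$ to $(1-s)(x_1,0)+s(x_1,0)=(x_1,0)$ (since $X'$ itself, being the negative gradient of \emph{some} metric near $p$ and tangent to its own local unstable manifold which here coincides with $B_1\times\{0\}$, restricts to $(x_1,0)$ there... more precisely $X'$ restricted to $B_1\times\{0\}$ points along that axis), and similarly on the $B_2$ axis. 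Hence the local unstable/stable manifolds of $p$ are \emph{the same set} for all $\mathcal{Y}_s$ in this stage. It follows that the \emph{global} descending manifold $\mathcal{D}(q)$ of any lower critical point $q\prec p$ is unchanged (its closure only meets $N_p$ along $\mathcal{D}(p)$'s region, which is fixed), and the global ascending manifolds $\mathcal{A}(r)$ of higher $r$ likewise. What \emph{can} change is how $\mathcal{D}(r)$ for higher $r$ traverses the chart $N_p$: a trajectory of $\mathcal{D}(r)$ entering $N_p$ transverse to the stable slice $\{0\}\times B_2$ must exit near $\mathcal{D}(p)=B_1\times\{0\}$. Here the $\lambda$-Lemma (the Inclination Lemma of \cite{palis}) applies to each interpolant: since the field near $p$ is linear-hyperbolic-\emph{like} (in the strengthened Morse chart, $\mathcal{D}(p)$ and $\mathcal{A}(p)$ are standard, so the $\lambda$-Lemma's hypotheses hold uniformly), any disk transverse to $\mathcal{A}_{\mathrm{loc}}(p)$ is carried by the flow to a disk $C^1$-close to $\mathcal{D}_{\mathrm{loc}}(p)$. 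Because $\mathcal{D}_{\mathrm{loc}}(p)$ is \emph{fixed} across the stage, these outgoing sheets are $C^1$-close to a fixed submanifold, uniformly in $s$; transversality of $\mathcal{D}(r)$ with the lower $\mathcal{A}(q)$'s outside $N_p$ is then inherited from the already-transversal original $X$ near those lower points, because outside $N_p$ the field is unchanged and the incoming data to those lower neighborhoods is $C^1$-close to the original data. This is exactly where Lemma~\ref{lemma_inclination} does its work, and assembling this uniform-in-$s$ $\lambda$-Lemma estimate into a clean transversality statement for the whole path is the technical heart of the proof.

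Finally, once the regular path is constructed, the topological conjugacy $h$ between $X=\mathcal{Y}_0$ and $\mathcal{Y}_1$ with $h(p)=p$ for all critical $p$ follows from structural stability of negative gradient-like fields satisfying transversality on a compact manifold: along the path, each $\mathcal{Y}_s$ is structurally stable, so local conjugacies exist near each parameter value; a compactness argument on $[0,1]$ (Lebesgue number) chains finitely many of these into a single conjugacy $\mathcal{Y}_0\sim\mathcal{Y}_1$. That the conjugacies can be taken to fix critical points is immediate, since a topological conjugacy must send fixed points to fixed points and we may normalize it there (each local conjugacy near a hyperbolic fixed point can be chosen to fix that point, e.g. via the Hartman--Grobman normalization combined with the fact that $f$ and its quadratic model are preserved). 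The stated topological equivalence is then a weakening of the conjugacy and requires nothing further.
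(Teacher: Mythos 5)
Your skeleton coincides with the paper's (strengthened Morse chart from Theorem~\ref{theorem_Morse}, a deformation supported near one critical point at a time, inclination control for the dangerous pairs $q_{2}\prec p\prec q_{1}$, and a chain of structural-stability conjugacies over a compact parameter interval), but the central claim --- that \emph{every} field along your single straight-line interpolation $(1-s)X+s(x_{1},-x_{2})$ (cut off in space) satisfies transversality --- is asserted rather than proved. You invoke the $\lambda$-Lemma ``uniformly in $s$'', but the $\lambda$-Lemma is an asymptotic statement about one fixed field: it controls the tangent planes only of orbit segments that remain near $p$ for arbitrarily long times, and it provides no bound that is uniform in the deformation parameter $s$, in the size of the cut-off region, or over the orbits that traverse the modified chart at intermediate distance from $p$. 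Your interpolant differs from $X$ by a perturbation that is $C^{0}$-small but \emph{not} $C^{1}$-small (its linearization at $p$ moves from $(A,-B)$ to $((1-s)A+sI,\,-[(1-s)B+sI])$), and for such perturbations preservation of transversality is exactly the non-automatic point; this is the very gap in \cite{newhouse_peixoto} and \cite{franks} that the theorem is meant to close, and your proposal leaves it open (you yourself defer it as ``the technical heart'').

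The paper closes it by structuring the local path in three sub-stages for which a genuine quantitative estimate can be proved. First, $\mathcal{Z}_{s}=X-s\rho_{r}R$ with $R(x)=X(x)-(Ax_{1},-Bx_{2})$ replaces $X$ by its linearization near $p$; since $R=O(|x|^{2})$, this \emph{is} a small $C^{1}$ perturbation for small $r$, so ordinary openness of transversality applies. Second, $(Ax_{1},-Bx_{2})$ is deformed to $(x_{1},-Bx_{2})$ by the bump-function family $\mathcal{Y}_{r,s}$, which keeps the stable component frozen and exactly linear ($-Bx_{2}$) and interpolates the unstable part between uniformly positive definite operators; it is precisely this structure (so that $D_{1}\phi^{2}\equiv 0$, $\|D_{2}\phi^{2}_{t}\|\le 1$, and the transition annulus $E(r)$ is crossed in time bounded independently of $r$) that makes the inclination bound of Lemma~\ref{lemma_inclination} provable with constants independent of $r$ and $s$, and Lemmas~\ref{lemma_partial_transversal} and~\ref{lemma_transversl} then convert that bound into transversality for the whole sub-path. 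Third, the same argument applied to $-f$ handles $B\to I$. Your one-step interpolation deforms the stable and unstable parts simultaneously and nonlinearly, so this estimate does not apply as stated and no substitute is supplied. A secondary point: structural stability yields a topological \emph{equivalence}, not a conjugacy; the paper upgrades to a conjugacy fixing the critical points by using the absence of closed orbits (the comment in \cite{palis_smale}), a step your appeal to Hartman--Grobman does not replace.
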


We call a continuous path of negative gradient-like vector fields
$\mathcal{Y}: [a,b] \rightarrow \mathfrak{X}^{\infty}(M)$ a regular
path if $\mathcal{Y}_{s}$ satisfies transversality for all $s$.

We need the following classical Comparison Theorem for ODEs (see
\cite[p.\ 96]{walter}).

\begin{theorem}[well-known]\label{theorem_comparison}
Suppose $F(t,x)$ is a Lipschitz continuous function defined on
$[t_{0}, t_{1}] \times [a,b]$. Let $x(t)$ be the solution of the
equation $\dot{x} = F(t,x)$ with $x(t_{0}) = x_{0}$. Suppose $y(t)$
is a $C^{1}$ function defined on $[t_{0}, t_{1}]$ with $y(t_{0}) =
x_{0}$. Then
\begin{enumerate}
\item if $\dot{y} \leq F(t,y)$, then $y(t) \leq x(t)$ on $[t_{0},
t_{1}]$;

\item  if $\dot{y} \geq F(t,y)$, then $y(t) \geq x(t)$ on $[t_{0},
t_{1}]$.
\end{enumerate}
\end{theorem}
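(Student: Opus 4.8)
The plan is to run the classical textbook argument, whose single non-obvious ingredient is a strict-inequality perturbation trick: a naive direct comparison of $y$ with $x$ breaks down exactly at points where $y(t)=x(t)$, where the hypothesis only yields $\dot y(t)\le\dot x(t)$ and not a strict inequality; the remedy is to compare $y$ first against a slightly ``steepened'' solution, and then let the steepening tend to $0$. Before starting, I would make two harmless reductions. First, since $F$ is given only on $[t_{0},t_{1}]\times[a,b]$, replace it by $\widetilde F(t,x)=F\bigl(t,\min\{b,\max\{a,x\}\}\bigr)$, which is Lipschitz on $[t_{0},t_{1}]\times\mathbb R$ (clamping is $1$-Lipschitz), agrees with $F$ on $[t_{0},t_{1}]\times[a,b]$, and hence changes nothing for $x(t)$ nor for the hypotheses on $y(t)$, both of which take values in $[a,b]$. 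With $\widetilde F$ in place, every initial value problem below has a solution defined on all of $[t_{0},t_{1}]$ by Picard--Lindel\"of. Second, it suffices to prove part (1): part (2) follows by applying (1) to $-y$, $-x$, and the (again Lipschitz) field $\widetilde F^{-}(t,u):=-\widetilde F(t,-u)$, since $-x$ solves $\dot u=\widetilde F^{-}(t,u)$ with $u(t_{0})=-x_{0}$ and $\frac{d}{dt}(-y)\le\widetilde F^{-}(t,-y)$.

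\textbf{Step 1: strict inequality for a perturbed equation.} For $\varepsilon>0$ let $x_{\varepsilon}$ be the solution on $[t_{0},t_{1}]$ of $\dot x_{\varepsilon}=\widetilde F(t,x_{\varepsilon})+\varepsilon$ with $x_{\varepsilon}(t_{0})=x_{0}$. I claim $y(t)<x_{\varepsilon}(t)$ for every $t\in(t_{0},t_{1}]$. Set $g=x_{\varepsilon}-y$; then $g$ is $C^{1}$, $g(t_{0})=0$, and $g'(t_{0})=\widetilde F(t_{0},x_{0})+\varepsilon-\dot y(t_{0})\ge\varepsilon>0$, so $g>0$ on some interval $(t_{0},t_{0}+\delta)$. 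If $g$ were not positive throughout $(t_{0},t_{1}]$, put $\tau=\inf\{t\in(t_{0},t_{1}]:g(t)\le0\}$; this set is nonempty, so $\tau$ is well defined, $\tau\ge t_{0}+\delta>t_{0}$, and $g>0$ on $(t_{0},\tau)$ while $g(\tau)=0$ by continuity. Then $x_{\varepsilon}(\tau)=y(\tau)$, and since $g$ decreases to $0$ from the left at $\tau$ we get $g'(\tau)\le0$. But $g'(\tau)=\widetilde F(\tau,x_{\varepsilon}(\tau))+\varepsilon-\dot y(\tau)=\widetilde F(\tau,y(\tau))+\varepsilon-\dot y(\tau)\ge\varepsilon>0$, using $\dot y(\tau)\le F(\tau,y(\tau))=\widetilde F(\tau,y(\tau))$. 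This contradiction proves the claim.

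\textbf{Step 2: pass to the limit.} Let $L$ be a Lipschitz constant for $\widetilde F$ in the second variable. Then $|x_{\varepsilon}(t)-x(t)|\le\int_{t_{0}}^{t}\bigl(L\,|x_{\varepsilon}(s)-x(s)|+\varepsilon\bigr)\,ds$ on $[t_{0},t_{1}]$, so Gronwall's inequality gives $|x_{\varepsilon}(t)-x(t)|\le\frac{\varepsilon}{L}\bigl(e^{L(t_{1}-t_{0})}-1\bigr)$, whence $x_{\varepsilon}\to x$ uniformly on $[t_{0},t_{1}]$ as $\varepsilon\to0^{+}$. Combined with Step 1 this yields $y(t)\le x(t)$ on $[t_{0},t_{1}]$, proving (1); and (2) follows from the reduction above.

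\textbf{Expected main obstacle.} There is no genuine obstacle here — this is precisely why the statement is labelled ``well-known'' — but the one point that must be handled carefully rather than glossed over is the failure of a direct comparison at contact points $y(t)=x(t)$; the $\varepsilon$-steepening together with the first-contact argument of Step 1 is exactly what circumvents this, and the only bookkeeping is guaranteeing that $x_{\varepsilon}$ exists on the full interval $[t_{0},t_{1}]$, which the preliminary extension of $F$ settles.
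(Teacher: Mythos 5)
Your proof is correct: the clamping extension, the $\varepsilon$-steepened solution with the first-contact argument, and the Gronwall passage to the limit together give a complete and rigorous argument (the only cosmetic point is the harmless convention $L>0$ so the bound $\frac{\varepsilon}{L}(e^{L(t_{1}-t_{0})}-1)$ makes sense). The paper itself offers no proof of this statement, citing it as well known from Walter's textbook, and your argument is essentially the standard one found there, so there is nothing to reconcile.
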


Suppose $H = H_{1} \oplus H_{2}$ is a Hilbert space, $v = (v_{1},
v_{2}) \in H$, $v_{1} \neq 0$, and $\lambda =
\frac{\|v_{2}\|}{\|v_{1}\|}$. We call $\lambda$ the inclination of
$v$ with respect to $H_{1}$. Suppose $L$ is a closed subspace of
$H$, and $P: H \rightarrow H_{1}$ is the projection. If $P: L
\rightarrow P(L)$ is a topological linear isomorphism, then there
exists a bounded linear operator $A: P(L) \rightarrow H_{2}$ such
that $L$ is the graph of $A$, i.e., for any $v \in L$, we have $v =
(v_{1}, A v_{1})$, where $v_{1} \in P(L)$. We call the supremum of
the inclinations of all non-zero vectors in $L$ the inclination of
$L$ with respect to $H_{1}$. Clearly, the inclination of $L$ equals,
$\|A\|$, the norm of $A$.

Suppose $H$, $H_{1}$ and $H_{2}$ are Hilbert spaces as above.
Suppose $A_{0}$ and $A_{1}$ are bounded linear operators on $H_{1}$, and $B$
is a bounded linear operator on $H_{2}$. Suppose further there exist positive numbers
$\alpha_{0} > 0$, $\alpha_{1} > 0$ and $\beta > 0$ such that
\begin{equation}\label{first_operator}
  \alpha_{0} \langle w, w \rangle \leq \langle A_{i} w, w \rangle
  \leq \alpha_{1} \langle w, w \rangle \qquad (i=0,1),
\end{equation}
and
\begin{equation}\label{second_operator}
  \beta \langle w, w \rangle \leq \langle B w, w \rangle.
\end{equation}

Let $\rho$ be a smooth bump function on $(-\infty, +\infty)$ such
that $0 \leq \rho \leq 1$, $\rho(s) \equiv 1$ when $s \leq
\frac{1}{2}$, and $\rho(s) \equiv 0$ when $s \geq 1$. Define
$\rho_{r}(s) = \rho (\frac{s}{r})$ for $r > 0$. For convenience, we
denote $\rho_{r}(\|x_{i}\|)$ by $\rho_{r}(x_{i})$, where $x_{i} \in
H_{i}$.

Define a smooth vector field $X_{r}$ on $H$ by
\begin{equation}\label{equation_x_r}
  X_{r} (x_{1}, x_{2}) = (\rho_{r}(x_{1}) \rho_{r}(x_{2}) A_{0} x_{1} +
  [1- \rho_{r}(x_{1}) \rho_{r}(x_{2})] A_{1} x_{1}, - B x_{2}).
\end{equation}
Denote the flow generated by $X_{r}$ by $\phi_{t}(x_{1}, x_{2})$.
For a fixed $t$, $\phi_{t}$ is a diffeomorphism, thus $D \phi_{t}$
acts on the tangent vectors at each point $(x_{1}, x_{2})$, where $D
\phi_{t}$ is the differential of $\phi_{t}$ with respect to $x =
(x_{1},x_{2})$.

\begin{lemma}\label{lemma_inclination}
For any $\epsilon > 0$, there exists $\delta >0$ such that the
following holds. For any $r>0$ and $v \in H$, if the inclination of
$v$ with respect to $H_{1}$ is less than $\delta$, then we have the
inclination of $D \phi_{t} \cdot v$ with respect to $H_{1}$ is less
than $\epsilon$ for all $t \geq 0$. Here $\delta$ only depends on
$\alpha_{0}$, $\alpha_{1}$, $\beta$ and $\epsilon$, and $\delta$ is
independent of $r$.
\end{lemma}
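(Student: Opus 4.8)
The plan is to track the evolution of the inclination $\lambda(t) = \|(D\phi_t\cdot v)_2\|/\|(D\phi_t\cdot v)_1\|$ along the flow and derive a differential inequality showing it cannot grow past $\epsilon$. Write $v(t) = D\phi_t\cdot v = (u_1(t), u_2(t))$ with $u_i(t)\in H_i$. The variational equation is $\dot v(t) = DX_r(\phi_t(x))\cdot v(t)$. Because the $H_2$-component of $X_r$ is the linear field $-Bx_2$ (independent of $x_1$), the evolution of $u_2$ decouples: $\dot u_2 = -B u_2$. The $H_1$-component is more delicate because the coefficient $a(x) := \rho_r(x_1)\rho_r(x_2)A_0 + [1-\rho_r(x_1)\rho_r(x_2)]A_1$ depends on both $x_1$ and $x_2$, so $\dot u_1 = a(\phi_t(x))u_1 + [\text{terms from }D_2(\cdots)]u_2$. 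The key structural point is that this cross term is linear in $u_2$, so it contributes a term controlled by $\|u_2\|$ times a function of the trajectory.

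The central estimates come from (\ref{first_operator}) and (\ref{second_operator}). First I would bound $\|u_2\|$ from above and below: since $\dot u_2 = -Bu_2$, we get $\frac{d}{dt}\|u_2\|^2 = -2\langle Bu_2,u_2\rangle \le -2\beta\|u_2\|^2$, hence $\|u_2(t)\| \le e^{-\beta t}\|u_2(0)\|$ — in particular $u_2$ decays. Next, for $u_1$: using that $\langle a(x)w,w\rangle \ge \alpha_0\|w\|^2$ by convexity of the bound (both $A_0$ and $A_1$ satisfy the lower bound, so any convex combination does), I get $\frac{d}{dt}\|u_1\|^2 \ge 2\alpha_0\|u_1\|^2 - (\text{cross term}) $, where the cross term is bounded by $C\|u_1\|\|u_2\|$ for a constant $C$ depending only on $\alpha_1$, $\beta$ and on $\|D\rho\|_\infty$ — and crucially \emph{not} on $r$, because $\rho_r(s) = \rho(s/r)$ has derivative $\frac1r\rho'(s/r)$ but this is multiplied by $\|x_i\|$ which is of order $r$ on the support of the derivative, so the scaling cancels. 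This $r$-independence is exactly what makes $\delta$ independent of $r$. From these two inequalities I would derive, via the Comparison Theorem \ref{theorem_comparison} applied to $\lambda(t)^2$ (or to $\|u_2\|^2/\|u_1\|^2$ directly), a differential inequality of the form $\dot\lambda \le -2(\alpha_0+\beta)\lambda + C\lambda^2 + (\text{lower order in }\lambda)$, or more simply an inequality showing that the region $\{\lambda \le \epsilon\}$ is forward-invariant once $\delta$ is chosen small enough relative to $\epsilon$, $\alpha_0$, $\alpha_1$, $\beta$.

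Concretely, I would set $g(t) = \|u_2(t)\|^2$ and $q(t) = \|u_1(t)\|^2$, note $g(0)/q(0) < \delta^2$, and show that as long as $g/q \le \epsilon^2$ one has $\frac{d}{dt}(g/q) \le 0$ (or at least stays below $\epsilon^2$) — the point being that the "good" exponential rates $2\alpha_0$ (expansion of $u_1$) and $2\beta$ (contraction of $u_2$) dominate the cross term when $\lambda$ is already small. Then a continuity/bootstrap argument (if $g/q$ ever reached $\epsilon^2$ it would have to be non-decreasing there, contradiction with the inequality) finishes it. The main obstacle I anticipate is bookkeeping the cross term $D_2 a(x)\cdot(\cdot)$ carefully enough to see that its bound is genuinely uniform in $r$: this rests on the observation that $\|D\rho_r(x_i)\|\cdot\|x_i\| = \frac1r|\rho'(\|x_i\|/r)|\cdot\|x_i\|$ is bounded by $\sup_s |s\rho'(s)|$, a finite constant independent of $r$. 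Once that uniform bound is in hand, the differential-inequality argument is routine, and the explicit dependence $\delta = \delta(\alpha_0,\alpha_1,\beta,\epsilon)$ falls out of solving (or just estimating) the comparison ODE.
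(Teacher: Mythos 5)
Your proposal correctly identifies the two genuine structural facts that make the lemma work: the $H_2$-component decouples and decays, and the $r$-dependence cancels because $\|D\rho_r\|$ is $O(1/r)$ while the relevant $\|x_i\|$ are $O(r)$ on its support, so the error coefficients are uniform in $r$. However, the central step — a pointwise differential inequality that makes $\{\lambda\le\epsilon\}$ forward-invariant — does not close, and the gap is not a bookkeeping issue.

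Here is the problem. The variational equation for $u_1$ contains not just the cross term $D_2 X^1_r\cdot u_2$ but also the \emph{self-interaction} error $\bigl(D_1 a(x)\cdot u_1\bigr)x_1$, because $X^1_r = a(x)x_1$ and $a$ depends on $x_1$. This term is bounded by $C_1\|u_1\|$ with $C_1 \sim \sup_s|s\rho'(s)|\cdot\|A_0-A_1\|$, a fixed constant independent of $r$ but \emph{not} small, and in general $C_1 \ge \alpha_0+\beta$. Feeding this into your ratio computation gives
$\frac{d}{dt}\log\lambda \;\le\; -(\alpha_0+\beta) + C_1 + C_2\lambda,$
and the right-hand side can be positive already at $\lambda=0$. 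So the region $\{\lambda\le\epsilon\}$ is \emph{not} forward-invariant, and the bootstrap collapses. You cannot absorb $C_1$ by making $\lambda$ small (it is a $\lambda$-linear, not $\lambda$-quadratic, term), nor by a clever choice of $\rho$ (the transition of $\rho$ from $1$ to $0$ over a fixed relative interval forces $\sup_s|s\rho'(s)|\gtrsim 1$).

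The paper's proof sidesteps this by replacing the pointwise inequality with a finite-time estimate. It introduces the transition annulus $E(r)=\overline{\mathbb{D}_1(r)\times\mathbb{D}_2(r)}\setminus(\mathbb{D}_1(r/2)\times\mathbb{D}_2(r/2))$ and observes: (i) outside $E(r)$ the field is exactly linear, $D\phi_t=\mathrm{diag}(e^{A_i t},e^{-Bt})$, so the inclination is \emph{non-increasing} there; (ii) because $\|\phi^1_t\|$ grows at rate at least $\alpha_0$ and $\|\phi^2_t\|$ decays at rate at least $\beta$, the orbit enters $E(r)$ at most twice, each visit lasting at most $T=\frac{\ln 2}{\alpha_0}+\frac{\ln 2}{\beta}$. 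Inside $E(r)$ the paper does not try to make the bad terms small — it simply bounds $\|D_1\phi^1_t\cdot w\|\ge e^{-C_1 T}\|w\|$ and $\|D_2\phi^1_t\cdot w\|\le K_2\|w\|$ over that bounded time window, yielding $\lambda_1 \le \lambda_0/(K_1-K_2\lambda_0)$ per passage. This finite-time, two-passage structure is the essential idea your sketch is missing; without it the differential-inequality route cannot be made to work.
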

\begin{proof}
The flow $\phi_{t} = (\phi^{1}_{t}, \phi^{2}_{t})$ satisfies the
following ordinary differential equation
\[
   \left\{ \begin{array} {rcl}
             \dot{{\phi}_{t}}^{1}  &  =  &  \rho_{r}(\phi_{t}^{1}) \rho_{r}(\phi_{t}^{2}) A_{0} \phi_{t}^{1} +
                                    [1- \rho_{r}(\phi_{t}^{1}) \rho_{r}(\phi_{t}^{2})] A_{1} \phi_{t}^{1}, \\
             \dot{{\phi}_{t}}^{2}  &  =  &  - B \phi_{t}^{2}.
           \end{array}
   \right.
\]
Denote $\rho_{r}(\phi_{t}^{1}) \rho_{r}(\phi_{t}^{2}) A_{0} + [1- \rho_{r}(\phi_{t}^{1}) \rho_{r}(\phi_{t}^{2})] A_{1}$ by $A(\phi_{t}^{1}, \phi_{t}^{2})$. We have
\[
  \frac{d}{dt} \langle \phi_{t}^{1}, \phi_{t}^{1} \rangle = 2 \langle
  \dot{{\phi}_{t}}^{1}, \phi_{t}^{1} \rangle = 2 \langle A(\phi_{t}^{1},
\phi_{t}^{2}) \phi_{t}^{1}, \phi_{t}^{1} \rangle.
\]
By (\ref{first_operator}), we have
\[
  0 \leq 2 \alpha_{0} \langle \phi_{t}^{1}, \phi_{t}^{1} \rangle
  \leq \frac{d}{dt} \langle \phi_{t}^{1}, \phi_{t}^{1} \rangle
  \leq 2 \alpha_{1} \langle \phi_{t}^{1}, \phi_{t}^{1} \rangle.
\]
Thus $\| \phi_{t}^{1} \|$ is increasing, and by Theorem
\ref{theorem_comparison}, we have
\begin{equation}\label{lemma_inclination_1}
  e^{\alpha_{0} t } \| \phi^{1}_{0} \|  \leq  \| \phi^{1}_{t} \|
  \leq  e^{\alpha_{1} t } \| \phi^{1}_{0} \|.
\end{equation}
Similarly, $\| \phi_{t}^{2} \|$ is decreasing, $\phi^{2}_{t} = e^{-Bt} \phi^{2}_{0}$, and $\| \phi^{2}_{t} \|  \leq  e^{- \beta t } \| \phi^{2}_{0} \|$.

Let $\mathbb{D}_{1}(r) = \{ x_{1} \in H_{1} \mid \| x_{1} \| < r
\}$, and $\mathbb{D}_{2}(r) = \{ x_{2} \in H_{2} \mid \| x_{2} \| <
r \}$. Clearly, $A(x_{1}, x_{2})|_{H- (\mathbb{D}_{1}(r) \times
\mathbb{D}_{2}(r))} = A_{1}$, and $A(x_{1},
x_{2})|_{\overline{\mathbb{D}_{1}(\frac{r}{2}) \times
\mathbb{D}_{2}(\frac{r}{2})}} = A_{0}$. Denote
$\overline{\mathbb{D}_{1}(r) \times \mathbb{D}_{2}(r)} -
(\mathbb{D}_{1}(\frac{r}{2}) \times \mathbb{D}_{2}(\frac{r}{2}))$ by
$E(r)$. (This is illustrated by Figure \ref{figure_domain_E_r}. The shadowed part is $E(r)$. The arrows indicate the directions of flows.) When $\phi([0,t], x)$ is out of $E(r)$, we have $\phi(t,x) =
(e^{A_{i}t} x_{1}, e^{-Bt} x_{2})$, and
\[
D \phi_{t} = \left(
\begin{array}{cc}
e^{A_{i}t} & 0 \\
0 & e^{-Bt}
\end{array}
\right).
\]
Since $\| e^{A_{i}t} w \| \geq \|w\|$ and $\| e^{-Bt} w \| \leq
\|w\|$ for $t \geq 0$, we have that the inclination of $D \phi_{t}
\cdot v$ is decreasing when $t$ is increasing. Thus it suffices to
control the variation of the inclination when $\phi_{t}(x)$ passes
through $E(r)$.

\begin{figure}[!htbp]
\centering
\includegraphics[scale=0.4]{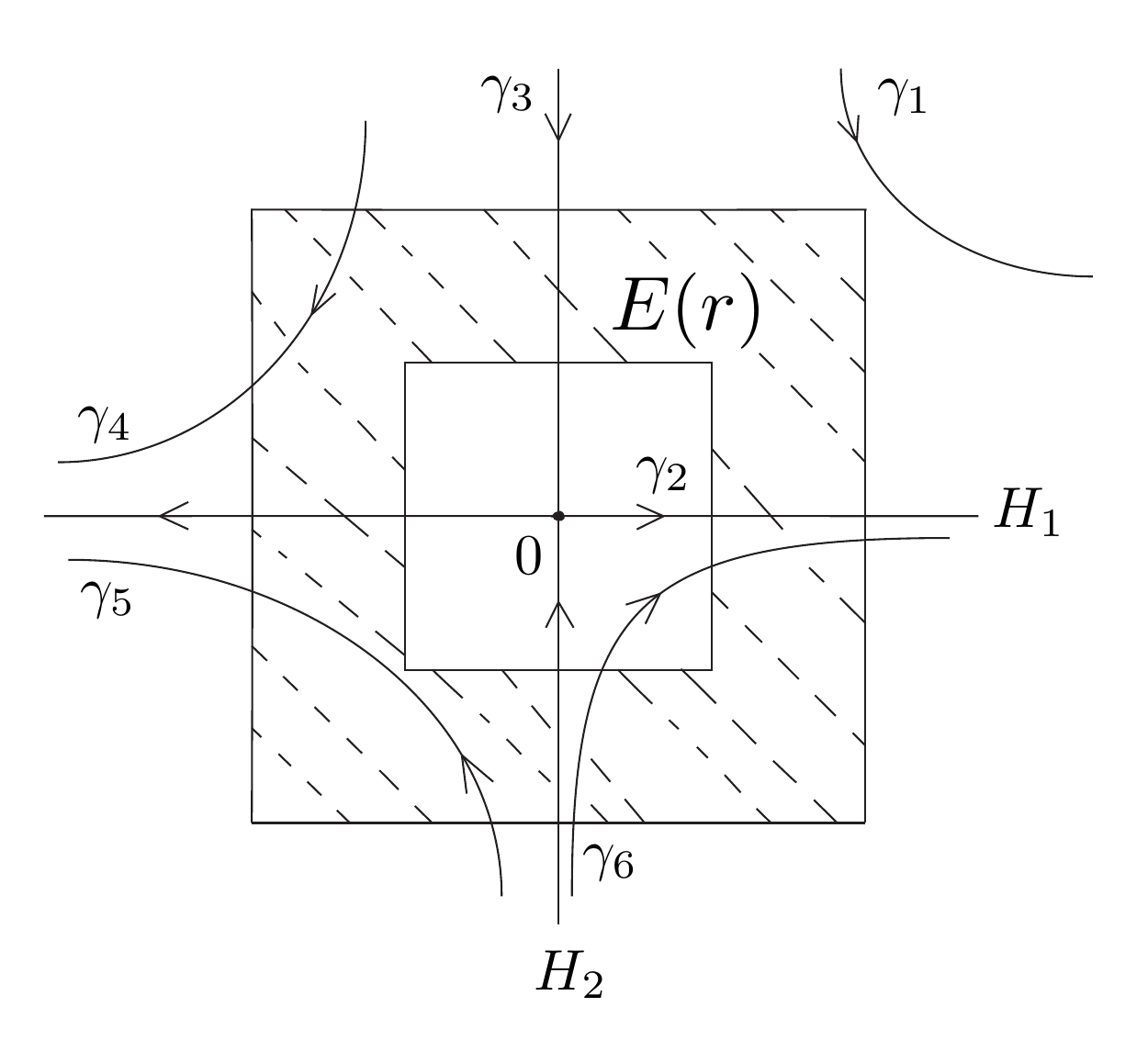} \caption{Domanin $E(r)$}
\label{figure_domain_E_r}
\end{figure}

Suppose $t \geq 0$ and $\| \phi^{1}_{t} \| = 2 \| \phi^{1}_{0} \|$,
then by (\ref{lemma_inclination_1}), we have $t \leq \frac{\ln
2}{\alpha_{0}}$. Similarly, if $\| \phi^{2}_{t} \| = \frac{1}{2} \|
\phi^{2}_{0} \|$, then $t \leq \frac{\ln 2}{\beta}$. Since $\|
\phi^{1}_{t} \|$ is increasing and $\| \phi^{2}_{t} \|$ is
decreasing, we infer that $\phi_{t}$ enters $E(r)$ at most twice,
and the time for it to stay in $E(r)$ is no more than
\begin{equation}\label{lemma_inclination_2}
  T = \frac{\ln 2}{\alpha_{0}} + \frac{\ln 2}{\beta}.
\end{equation}
(Figure \ref{figure_domain_E_r} illustrates this. The trajectory $\gamma_{1}$ and the constant trajectory at $0$ do not enter $E(r)$. The trajectory $\gamma_{2}$ is in $H_{1}$. Both $\gamma_{2}$ and $\gamma_{4}$ enter $E(r)$ once, and the time for them to stay in $E(r)$ is no more than $\frac{\ln 2}{\alpha_{0}}$. The trajectory $\gamma_{3}$ is in $H_{2}$, it enters $E(r)$ once and stays for no more than $\frac{\ln 2}{\beta}$. The trajectory $\gamma_{5}$ enters $E(r)$ once and stays for no more than $\frac{\ln 2}{\alpha_{0}} + \frac{\ln 2}{\beta}$. The trajectory $\gamma_{6}$ enters $E(r)$ twice, the first time for it to stay is no more than $\frac{\ln 2}{\beta}$, and the second time is no more than $\frac{\ln 2}{\alpha_{0}}$.)

Suppose $\phi([0,t],x) \subset E(r)$, we have $0 \leq t \leq T$.
Since $\phi^{2}_{t}(x) = e^{-Bt}x_{2}$, we have
\begin{equation}\label{lemma_inclination_3}
  D_{1} \phi^{2}_{t} = 0, \qquad D_{2} \phi^{2}_{t} = e^{-Bt},
  \qquad \text{and} \quad \| D_{2} \phi^{2}_{t} \cdot w \| \leq \| w \|.
\end{equation}

Since
\[
  \dot{{\phi}_{t}}^{1} = A(\phi_{t}^{1}, e^{-Bt}x_{2}) \phi_{t}^{1},
\]
we have
\[
  D_{1} \dot{{\phi}_{t}}^{1} \cdot w = A(\phi_{t}^{1}, \phi_{t}^{2}) (D_{1}
 \phi_{t}^{1} \cdot w) + D \rho_{r}(\phi_{t}^{1}) (D_{1} \phi_{t}^{1} \cdot w)
 \rho_{r}(\phi_{t}^{2}) (A_{0}-A_{1}) \phi_{t}^{1}.
\]
Thus
\begin{eqnarray*}
   \frac{d}{dt} \langle D_{1} \phi_{t}^{1} \cdot w, D_{1} \phi_{t}^{1} \cdot
   w \rangle & = & 2 \langle D_{1} \dot{{\phi}_{t}}^{1} \cdot w,
   D_{1} \phi_{t}^{1} \cdot w \rangle \\
   & = & 2 \langle A(\phi_{t}^{1}, \phi_{t}^{2}) (D_{1} \phi_{t}^{1} \cdot w), D_{1} \phi_{t}^{1} \cdot
   w \rangle \\
   &   & + 2 \langle D \rho_{r}(\phi_{t}^{1}) (D_{1} \phi_{t}^{1} \cdot w)
   \rho_{r}(\phi_{t}^{2}) (A_{0}-A_{1}) \phi_{t}^{1}, D_{1} \phi_{t}^{1} \cdot w \rangle .
\end{eqnarray*}
Clearly, $D \rho_{r}(\phi_{t}^{1}) = O(r^{-1})$, and $\| \phi_{t}^{1} \|
\leq r$ when $D \rho_{r}(\phi_{t}^{1}) \neq 0$. So there exists a
constant $C_{1} > 0$ which is independent of $r$ such that
\[
  | \langle D \rho_{r}(\phi_{t}^{1}) (D_{1} \phi_{t}^{1} \cdot w)
  \rho_{r}(\phi_{t}^{2}) (A_{0}-A_{1}) \phi_{t}^{1}, D_{1} \phi_{t}^{1} \cdot w \rangle
  | \leq C_{1} \| D_{1} \phi_{t}^{1} \cdot w \|^{2}.
\]
Combining the above inequality with (\ref{first_operator}), we get
\[
  -2 C_{1} \langle D_{1} \phi_{t}^{1} \cdot w, D_{1} \phi_{t}^{1} \cdot w
  \rangle \leq \frac{d}{dt} \langle D_{1} \phi_{t}^{1} \cdot w, D_{1} \phi_{t}^{1} \cdot
  w \rangle.
\]
Since $D_{1} \phi^{1}_{0} = I$ and $\| D_{1} \phi^{1}_{0} \cdot w \|
= \| w \|$, by Theorem \ref{theorem_comparison}, we have
\begin{equation}\label{lemma_inclination_4}
   \| D_{1} \phi^{1}_{t} \cdot w \| \geq e^{-C_{1} t} \|w\| \geq e^{-C_{1} T} \|w\|.
\end{equation}

Similarly, we have
\begin{eqnarray*}
   \frac{d}{dt} \langle D_{2} \phi_{t}^{1} \cdot w, D_{2} \phi_{t}^{1} \cdot w \rangle
   & = & 2 \langle A(\phi_{t}^{1}, \phi_{t}^{2}) (D_{2} \phi_{t}^{1} \cdot w), D_{2} \phi_{t}^{1} \cdot
   w \rangle \\
   &   & + 2 \langle D \rho_{r}(\phi_{t}^{1}) (D_{2} \phi_{t}^{1} \cdot w)
   \rho_{r}(\phi_{t}^{2}) (A_{0}-A_{1}) \phi_{t}^{1}, D_{2} \phi_{t}^{1} \cdot w \rangle \\
   &   & + 2 \langle \rho_{r}(\phi_{t}^{1}) D \rho_{r}(\phi_{t}^{2}) e^{-Bt} w (A_{0}-A_{1}) \phi_{t}^{1},
   D_{2} \phi_{t}^{1} \cdot w \rangle,
\end{eqnarray*}
and
\[
  | \langle D \rho_{r}(\phi_{t}^{1}) (D_{2} \phi_{t}^{1} \cdot w)
  \rho_{r}(\phi_{t}^{2}) (A_{0}-A_{1}) \phi_{t}^{1}, D_{2} \phi_{t}^{1} \cdot w \rangle
  | \leq C_{1} \| D_{2} \phi_{t}^{1} \cdot w \|^{2}.
\]
In addition, $\rho_{r}(\phi_{t}^{1}) D \rho_{r}(\phi_{t}^{2}) = O(r^{-1})$,
and $\| \phi_{t}^{1} \| \leq r$ when $\rho_{r}(\phi_{t}^{1}) D
\rho_{r}(\phi_{t}^{2}) \neq 0$. So there exists $C_{2} > 0$ which is
independent of $r$ such that
\begin{eqnarray*}
   &      & 2 | \langle \rho_{r}(\phi_{t}^{1}) D \rho_{r}(\phi_{t}^{2}) e^{-Bt} w (A_{0}-A_{1}) \phi_{t}^{1},
   D_{2} \phi_{t}^{1} \cdot w \rangle | \\
   & \leq &  2 C_{2} \| D_{2} \phi_{t}^{1} \cdot w \| \|w\| \leq C_{2} \| D_{2} \phi_{t}^{1} \cdot w
   \|^{2} + C_{2} \|w\|^{2}.
\end{eqnarray*}
Thus by (\ref{first_operator}), we infer
\[
   \frac{d}{dt} \langle D_{2} \phi_{t}^{1} \cdot w, D_{2} \phi_{t}^{1} \cdot w
   \rangle \leq   (2 \alpha_{1} + 2 C_{1} + C_{2}) \langle D_{2} \phi_{t}^{1} \cdot w, D_{2} \phi_{t}^{1} \cdot w
   \rangle + C_{2} \|w\|^{2}.
\]
Since $\| D_{2} \phi^{1}_{0} \cdot w \| = 0$, by Theorem
\ref{theorem_comparison} again, there exists a $C_{3} > 0$ which is
independent of $r$ such that
\begin{equation}\label{lemma_inclination_5}
   \| D_{2} \phi^{1}_{t} \cdot w \| \leq \left[ \frac{C_{2}}{C_{3}} (e^{C_{3} T} - 1)
   \right]^{\frac{1}{2}} \|w\|.
\end{equation}

By (\ref{lemma_inclination_2}), (\ref{lemma_inclination_4}) and
(\ref{lemma_inclination_5}), there exist $K_{1} > 0$ and $K_{2} >
0$, which are independent of $r$, such that
\begin{equation}\label{lemma_inclination_6}
  \| D_{1} \phi^{1}_{t} \cdot w \| \geq K_{1} \|w\|,  \qquad \text{and} \quad  \| D_{2} \phi^{1}_{t} \cdot w
  \| \leq K_{2} \|w\|.
\end{equation}

Suppose $v=(v_{1}, v_{2}) \in H_{1} \oplus H_{2}$, and its
inclination is $\lambda_{0} = \frac{\|v_{2}\|}{\|v_{1}\|}$. By
(\ref{lemma_inclination_3}) and (\ref{lemma_inclination_6}), we have
the inclination of $D \phi_{t} \cdot v$ is
\begin{eqnarray*}
   \lambda_{1}  & = &  \frac{\| D_{2} \phi^{2}_{t} \cdot v_{2} \|}
   { \| D_{1} \phi^{1}_{t} \cdot v_{1} + D_{2} \phi^{1}_{t} \cdot v_{2} \| }
   \leq \frac{\| D_{2} \phi^{2}_{t} \cdot v_{2} \|}
   { \| D_{1} \phi^{1}_{t} \cdot v_{1} \| - \| D_{2} \phi^{1}_{t} \cdot v_{2} \| } \\
   & \leq &  \frac{ \|v_{2}\| }{ K_{1} \|v_{1}\| - K_{2} \|v_{2}\| }
   = \frac{\lambda_{0}}{K_{1} - K_{2} \lambda_{0}}.
\end{eqnarray*}
Thus $\lambda_{1}$ tends to $0$ when $\lambda_{0}$ tends to $0$.

We know that $\phi_{t}(x)$ enters $E(r)$ at most twice, and $K_{1}$ and
$K_{2}$ are independent of $r$. Thus, for an $\epsilon$ in the statement of this lemma, we can find the desired $\delta$ which is independent of $r$.
\end{proof}

\begin{remark}
Our Lemma \ref{lemma_inclination} is similar to the classical $\lambda$-Lemma (\cite[chap.\ 2, lem.\ 7.1 \& 7.2]{palis_de}) as both are to control the inclinations of tangent vectors. However, there is one remarkable difference between them: Lemma \ref{lemma_inclination} deals with a family of vector fields (\ref{equation_x_r}) parameterized by $r$ and the desired $\delta$ is independent of $r$.
\end{remark}

The following definition of \textit{filtration} is a special case of
that in hyperbolic dynamical systems (see \cite[p.\
1029]{nitecki_shub}).

\begin{definition}
A compact submanifold $M_{1}$ with boundary inside $M$ is a
filtration for $X$ if $\dim (M_{1}) = \dim (M)$,
$\phi_{t}(M_{1}) \subseteq \mathrm{Int} M_{1}$ for $t>0$, and $X$ is
transverse to $\partial M_{1}$. Here $\mathrm{Int} M_{1}$ is the
interior of $M_{1}$, and $\phi_{t}$ is the flow generated by $X$.
\end{definition}

\begin{lemma}\label{lemma_filtration}
Suppose $X$ satisfies transversality. If $p$ and $q$ are critical
points such that $p \npreceq q$, then there exists a filtration
$M_{1}$ for $X$ such that $p \in M-M_{1}$ and $q \in \mathrm{Int} M_{1}$.
\end{lemma}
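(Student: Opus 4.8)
The plan is to realize $M_{1}$ as a sublevel set of a Lyapunov function for $X$ that has been ``rearranged'' so that $q$ lies strictly below $p$, and I would carry this out in three steps, the middle one being the substantive point. \emph{Step 1 (a compatible ordering).} Since $X$ satisfies transversality, $\preceq$ (Definition \ref{definition_partial_order}) is a partial order on the finite set of critical points, by \cite[p.\ 85, cor.\ 1]{palis_de}. The hypothesis $p \npreceq q$ forces $p \neq q$ (reflexivity) and forbids $p \prec q$, so $\preceq$ extends to a total order $\le^{*}$ on the critical set with $q <^{*} p$; equivalently, $S := \{\, c : c \preceq q \,\}$ is downward closed for $\preceq$, contains $q$, and omits $p$.

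\emph{Step 2 (an ordered Lyapunov--Morse function).} Starting from $f$, which is already a Lyapunov function for $X$, I would invoke the classical rearrangement of critical levels: if two critical points lie at the same (or, after a level-preserving reparametrization, adjacent) levels and no $X$-trajectory joins them, their heights may be interchanged by a modification of the function supported away from all critical points, preserving the negative gradient-like condition and leaving the function unchanged near every critical point. Since $c <^{*} c'$ rules out an $X$-trajectory from $c'$ to $c$, a finite chain of such swaps produces a Morse function $g$ on $M$ with the same critical points as $X$, for which $X$ remains a negative gradient-like field (Definition \ref{definition_gradient_like}), with $g(c) < g(c')$ whenever $c <^{*} c'$; in particular $g(q) < g(p)$ and $Xg < 0$ off the critical set. (This is the function-theoretic counterpart of an adapted filtration for a Morse decomposition; compare \cite{nitecki_shub} and \cite{palis_de}.)

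\emph{Step 3 (the sublevel set).} Choose a regular value $a$ of $g$ with $g(q) < a < g(p)$ (possible since $g$ has finitely many critical values) and put $M_{1} = g^{-1}((-\infty, a])$. As $M$ is compact and $a$ is regular, $M_{1}$ is a compact codimension-zero submanifold with $\partial M_{1} = g^{-1}(a)$ and $\dim M_{1} = \dim M$; since $a$ avoids every critical value, $\partial M_{1}$ contains no zero of $X$, and $dg(X) = Xg < 0$ along it, so $X$ is transverse to $\partial M_{1}$ and points inward. Writing $g(\phi_{t}(x)) = g(x) + \int_{0}^{t} Xg(\phi_{s}(x))\,ds$ and treating separately the case $x$ critical (then $\phi_{t}(x) = x$ and $g(x) < a$) and $x$ non-critical (then the integrand is negative for small $s$), one gets $\phi_{t}(M_{1}) \subseteq \text{Int}\, M_{1}$ for $t > 0$. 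Hence $M_{1}$ is a filtration, and $g(q) < a < g(p)$ yields $q \in \text{Int}\, M_{1}$ and $p \in M - M_{1}$.

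I expect Step 2 to be the only genuine obstacle: one must organize the swaps so that at each stage the two critical points being interchanged are really unjoined by a trajectory, and check that the supporting modification neither disturbs $X$ near the critical points nor destroys the gradient-like property elsewhere — exactly the content of the classical rearrangement lemma. Should one wish to bypass that machinery, $M_{1}$ can instead be assembled directly: beginning from $f^{-1}((-\infty,b])$ with $b < \min_{c \in S} f(c)$ and successively gluing on thin forward-invariant tubular neighborhoods of the descending manifolds $\mathcal{D}(c)$, $c \in S$, in order of increasing $f(c)$, where the downward closedness of $S$ is precisely what allows each attachment to avoid $p$.
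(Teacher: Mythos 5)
Your proposal is correct and follows essentially the same route as the paper: use the partial order to note that the set of critical points $\preceq q$ avoids $p$, rearrange the critical values via Milnor's rearrangement theorem (\cite[thm.\ 4.1]{milnor2}) so that the new Morse function $g$ keeps $X$ as a negative gradient-like field with $g(q) < g(p)$, and take a regular sublevel set between the two values as the filtration. The paper leaves your Step 3 implicit; your verification of it is the standard one and is fine.
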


Lemma \ref{lemma_filtration} can be proved as follows. The
transversality implies $``\preceq"$ is a partial order. We have $p
\npreceq q_{1}$ if $q_{1} \preceq q$. Using \cite[thm.\
4.1]{milnor2} repeatedly, we can modify $f$ to be a Morse function
$g$ such that $X$ is a negative gradient-like field for $g$ and
$g(q) < g(p)$. The proof is finished.

By Definition \ref{definition_gradient_like}, we have the following
obvious lemma.
\begin{lemma}\label{lemma_combine_field}
Suppose $X_{1}$ and $X_{2}$ are negative gradient-like fields of
$f$. Suppose $\sigma_{1}(x)$ and $\sigma_{2}(x)$ are nonnegative
smooth functions on $M$ such that $\sigma_{1} + \sigma_{2} > 0$.
Then $\sigma_{1} X_{1} + \sigma_{2} X_{2}$ is also a negative
gradient-like field for $f$.
\end{lemma}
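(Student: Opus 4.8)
The plan is simply to verify the two defining properties of Definition~\ref{definition_gradient_like} for the field $Z := \sigma_{1} X_{1} + \sigma_{2} X_{2}$.

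First I would handle the behaviour away from the critical points. For $x$ not a critical point, $Zf(x) = \sigma_{1}(x)\,X_{1}f(x) + \sigma_{2}(x)\,X_{2}f(x)$; each summand is $\leq 0$ since $\sigma_{i} \geq 0$ and $X_{i}f(x) < 0$, and since $\sigma_{1}(x) + \sigma_{2}(x) > 0$ at least one $\sigma_{i}(x)$ is strictly positive, forcing the corresponding summand to be strictly negative. Hence $Zf(x) < 0$.

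Second, near a critical point $p$. The key observation is that if a negative gradient-like field $X$ equals $-\nabla_{g}f$ near $p$ for some metric $g$, then in a fixed coordinate chart about $p$ one has $X = -A(x)^{-1}\nabla f(x)$, where $A(x)$ is the smoothly varying symmetric positive definite matrix of $g$ and $\nabla f$ is the Euclidean gradient in that chart; indeed the defining relation $g(\nabla_{g}f, v) = df(v) = \langle \nabla f, v \rangle$ gives $A\nabla_{g}f = \nabla f$. Applying this to $X_{1}$ and $X_{2}$ on a common neighborhood of $p$ produces smooth symmetric positive definite matrix functions $A_{1}, A_{2}$ with $X_{i} = -A_{i}^{-1}\nabla f$, so that
\[
  Z = -\bigl(\sigma_{1} A_{1}^{-1} + \sigma_{2} A_{2}^{-1}\bigr)\nabla f
\]
near $p$. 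For each $x$, the matrix $B(x) := \sigma_{1}(x) A_{1}(x)^{-1} + \sigma_{2}(x) A_{2}(x)^{-1}$ is a nonnegative linear combination of symmetric positive definite matrices with coefficients not both zero, hence symmetric positive definite, and it depends smoothly on $x$. Therefore $A := B^{-1}$ is again a smooth symmetric positive definite matrix function near $p$ --- i.e. the matrix of a metric $g$ defined near $p$ --- and $Z = -A^{-1}\nabla f = -\nabla_{g}f$ there. This gives the second property, so $Z$ is a negative gradient-like field for $f$.

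The only point that requires a moment's thought is to pass to the \emph{inverse} metric matrices (the ``sharp'' operators) before forming the convex combination: it is $A^{-1}$, not $A$, that enters linearly in the formula $\nabla_{g}f = A^{-1}\nabla f$, so averaging the $A_{i}$ directly would not yield the field $Z$. Once this is noticed, the rest is routine and uses only the elementary fact that a nonnegative combination of symmetric positive definite matrices with coefficients not both vanishing is symmetric positive definite.
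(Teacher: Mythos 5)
Your proof is correct and is exactly the direct verification that the paper has in mind when it calls the lemma ``obvious'' after referring back to Definition~\ref{definition_gradient_like}: you check $Zf<0$ off the critical set, and near each critical point you exhibit a metric whose negative gradient is $Z$ by combining the inverse metric matrices. The observation you emphasize --- that one must sum the $A_i^{-1}$ rather than the $A_i$ themselves, since $\nabla_g f = A^{-1}\nabla f$ --- is precisely the small step the paper leaves tacit, and your handling of it is right; the rest (positive definiteness of a nonnegative, not-both-zero combination, smoothness of $B^{-1}$) is routine and correct.
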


Let $p$ be a critical point. Suppose there exists a Morse chart near
$p$ (see (\ref{morse_chart})), and $X(x_{1}, x_{2}) = (A x_{1}, -B
x_{2})$, where $A$ and $B$ are symmetric positive definite linear
operators. Similarly to Lemma \ref{lemma_inclination}, define
\[
  \mathcal{Y}_{r} (x_{1}, x_{2}) = (\rho_{r}(x_{1}) \rho_{r}(x_{2}) x_{1} +
  [1- \rho_{r}(x_{1}) \rho_{r}(x_{2})] A x_{1}, - B x_{2})
\]
in this Morse chart and $\mathcal{Y}_{r} = X$ out of this Morse
chart. For $s \in [0,1]$, define
\[
  \mathcal{Y}_{r,s} = (1-s) X + s \mathcal{Y}_{r}.
\]

By Lemma \ref{lemma_combine_field}, for all $s \in [0,1]$,
$\mathcal{Y}_{r,s}$ is a negative gradient-like field for $f$.

\begin{lemma}\label{lemma_partial_transversal}
Suppose in addition $X$ satisfies transversality. Then when $r$ is small enough,
we have the following conclusion.

Suppose $q_{1}$ and $q_{2}$ are two critical points which are not of
the following two cases: (1) $q_{2} \prec p \prec q_{1}$; or (2)
$q_{1} \prec p \prec q_{2}$. Then we have that $q_{1}$ and $q_{2}$
are transversal with respect to $\mathcal{Y}_{r,s}$ for all $s \in
[0,1]$. Here $``\prec"$ is defined with respect to $X$.
\end{lemma}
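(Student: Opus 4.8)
The plan is to exploit that $\mathcal{Y}_{r,s}$ agrees with $X$ off the small box $B_r=\{(x_1,x_2):\|x_1\|<r,\ \|x_2\|<r\}$, which sits inside the given Morse chart $W$ of fixed radius $a$ (we always take $r<a/2$), while near $p$ one computes $\mathcal{Y}_{r,s}(x_1,x_2)=\bigl(((1-s)A+sI)x_1,\,-Bx_2\bigr)$. Hence $p$ is a hyperbolic zero of \emph{every} $\mathcal{Y}_{r,s}$, with the same index as for $X$ and with the \emph{same} local invariant manifolds $\{x_2=0\}$ and $\{x_1=0\}$; moreover in this chart $\mathcal{Y}_{r,s}$ is exactly a field of the form treated in Lemma~\ref{lemma_inclination} (with $A_0=(1-s)A+sI$, $A_1=A$), and $\alpha_0,\alpha_1,\beta$ can be chosen uniformly in $s\in[0,1]$. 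Thus Lemma~\ref{lemma_inclination} — and, by reversing time, its mirror image controlling inclinations with respect to $H_2$ — applies with $\delta$ independent of both $r$ and $s$.

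First I would dispose of the invariant manifolds that avoid $B_r$. Since $f$ is still strictly decreasing along $\mathcal{Y}_{r,s}$, the local manifolds at $p$ are $\{x_2=0,\|x_1\|\le a\}$ and $\{x_1=0,\|x_2\|\le a\}$, and everything else of $\mathcal{D}(p;\mathcal{Y}_{r,s})$ lies in $\{f<f(p)-a^{2}/4\}$, everything else of $\mathcal{A}(p;\mathcal{Y}_{r,s})$ in $\{f>f(p)+a^{2}/4\}$; both regions miss $B_r$, so $\mathcal{D}(p;\mathcal{Y}_{r,s})=\mathcal{D}(p;X)$ and $\mathcal{A}(p;\mathcal{Y}_{r,s})=\mathcal{A}(p;X)$ whenever $r<a/2$. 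Next, using Lemma~\ref{lemma_filtration} and the finiteness of the critical set: for each critical $q$ with $p\npreceq q$ choose an $X$-filtration $M_q$ with $q\in\text{Int}\,M_q$, $p\notin M_q$; for $r$ small $M_q$ misses $B_r$, is therefore a $\mathcal{Y}_{r,s}$-filtration, and contains $\mathcal{D}(q;\mathcal{Y}_{r,s})$, whence $\mathcal{D}(q;\mathcal{Y}_{r,s})=\mathcal{D}(q;X)$. Dually, if $q\npreceq p$ then $\mathcal{A}(q;\mathcal{Y}_{r,s})=\mathcal{A}(q;X)$.

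The delicate point, and the main obstacle, is the behaviour near $p$ of $\mathcal{D}(q;\mathcal{Y}_{r,s})$ when $p\prec q$ (and dually of $\mathcal{A}(q;\mathcal{Y}_{r,s})$ when $q\prec p$). Here I would track a sheet of $\mathcal{D}(q;X)$ as it descends through $W$: it is transverse to $\{x_1=0\}$, so by the classical $\lambda$-Lemma for the \emph{unmodified} $X$ (\cite{palis}) its tangent planes already have inclination $<\delta$ with respect to $H_1$ by the time it reaches $\{\|x_2\|\approx r\}$ — and up to there it still coincides with $\mathcal{D}(q;\mathcal{Y}_{r,s})$, since a backward orbit from that region stays in $\{f>\sup_{B_r}f\}$ and misses $B_r$. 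From that moment Lemma~\ref{lemma_inclination} keeps the inclination $<\epsilon$ forever; a chain application of the local behaviour at the finitely many critical points $\prec p$, together with continuous dependence, then shows that for $r$ small $\mathcal{D}(q;\mathcal{Y}_{r,s})$ coincides with $\mathcal{D}(q;X)$ off an $\eta$-neighbourhood of $\overline{\mathcal{D}(p;X)}$, lies in that $\eta$-neighbourhood otherwise, and is $C^{1}$-close to $\{x_2=0\}$ wherever it meets $W$ — all uniformly in $s$. (The hard work is precisely upgrading the one-vector estimate of Lemma~\ref{lemma_inclination} to this uniform $C^{1}$-control of a whole sheet and gluing it to the global picture.) The dual statement for $\mathcal{A}(q;\mathcal{Y}_{r,s})$, $q\prec p$, near $\{x_1=0\}$ and $\overline{\mathcal{A}(p;X)}$, uses the time-reversed Lemma~\ref{lemma_inclination}.

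Finally I would conclude by cases, checking via Lemma~\ref{lemma_local_transversal} (pointwise) that $\mathcal{D}(q_1;\mathcal{Y}_{r,s})$ is transverse to $\mathcal{A}(q_2;\mathcal{Y}_{r,s})$, the condition $\mathcal{A}(q_1)\pitchfork\mathcal{D}(q_2)$ being handled symmetrically. If $p\npreceq q_1$ then $\mathcal{D}(q_1;\mathcal{Y}_{r,s})=\mathcal{D}(q_1;X)$, and the order identities ``$\mathcal{D}(q'')\cap\mathcal{A}(q')\ne\emptyset\iff q'\preceq q''$'' force $\overline{\mathcal{D}(q_1;X)}\cap\overline{\mathcal{A}(p;X)}=\emptyset$ (else $p\preceq q_1$), so for $r$ small every point of $\mathcal{D}(q_1;X)\cap\mathcal{A}(q_2;\mathcal{Y}_{r,s})$ lies on the part of $\mathcal{A}(q_2;\mathcal{Y}_{r,s})$ equal to $\mathcal{A}(q_2;X)$, where transversality holds by $X$; symmetrically if $q_2\npreceq p$. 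The only remaining possibility is $q_1=p$ or $q_2=p$ — for if $q_1\ne p\ne q_2$ then $p\preceq q_1$ and $q_2\preceq p$ would give $q_2\prec p\prec q_1$, the excluded case (1). Say $q_1=p$: then $\mathcal{D}(p;\mathcal{Y}_{r,s})=\mathcal{D}(p;X)$ meets the $\eta$-neighbourhood of $\overline{\mathcal{A}(p;X)}$ only in $W$ near $p$ (since $\mathcal{D}(p;X)\cap\overline{\mathcal{A}(p;X)}=\{p\}$), so an intersection point with $\mathcal{A}(q_2;\mathcal{Y}_{r,s})$ is either on the $\mathcal{A}(q_2;X)$-part (transverse by $X$) or lies in $W$ near $p$, where $\mathcal{A}(q_2;\mathcal{Y}_{r,s})$ is $C^{1}$-close to $\{x_1=0\}$ and $\mathcal{D}(p;X)\cap W=\{x_2=0\}$; since $\text{ind}(p)+(\dim M-\text{ind}(q_2))>\dim M$, near-verticality yields transversality (case (2) is the one excluded for the symmetric condition). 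Every smallness requirement on $r$ came from $X$, $f$ and the chart alone, so a single $r$ works for all $s\in[0,1]$, which proves the lemma.
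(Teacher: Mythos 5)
The soft half of your argument (the identities $\mathcal{D}(p;\mathcal{Y}_{r,s})=\mathcal{D}(p;X)$ and $\mathcal{A}(p;\mathcal{Y}_{r,s})=\mathcal{A}(p;X)$, and the filtration argument giving $\mathcal{D}(q;\mathcal{Y}_{r,s})=\mathcal{D}(q;X)$ when $p\npreceq q$, dually for ascending manifolds) matches the paper. The gap is your third paragraph, on which the entire case analysis of the last paragraph depends: you assert, uniformly in $s$, that $\mathcal{D}(q;\mathcal{Y}_{r,s})$ coincides with $\mathcal{D}(q;X)$ off an $\eta$-neighbourhood of $\overline{\mathcal{D}(p;X)}$, lies inside that neighbourhood otherwise, and is $C^{1}$-close to $\{x_{2}=0\}$ wherever it meets the chart, and you explicitly leave the proof out (``the hard work is precisely upgrading \dots''). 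This is not a routine consequence of Lemma \ref{lemma_inclination}: that lemma controls single tangent vectors transported through the modified box, and passing from it to set-coincidence off an $\eta$-neighbourhood of $\overline{\mathcal{D}(p;X)}$ requires a broken-trajectory compactness argument, while the blanket statement ``$C^{1}$-close to $\{x_{2}=0\}$ wherever it meets $W$'' is false as stated (sheets of $\mathcal{D}(q)$ can cross the chart far from $\mathcal{A}(p)$, where no inclination control holds or is needed). A smaller flaw of the same nature: ``$\mathcal{D}(p;X)$ meets the $\eta$-neighbourhood of $\overline{\mathcal{A}(p;X)}$ only in $W$ near $p$'' does not follow from $\mathcal{D}(p;X)\cap\overline{\mathcal{A}(p;X)}=\{p\}$, since $\mathcal{D}(p;X)$ is not compact; it is true, but because $\overline{\mathcal{A}(p;X)}\subseteq\{f\ge f(p)\}$ while $\mathcal{D}(p;X)$ outside the chart lies in $\{f\le f(p)-c\}$.

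Moreover, none of this machinery is needed here; that is precisely what the excluded configurations buy. The paper's proof is entirely soft: place $p$ in a band $M^{a,b}$ containing no other critical point, with the support of the modification inside it; observe that $M^{a}$ is forward invariant and the field is unchanged there (similarly above $b$), so the restrictions of the relevant invariant manifolds to these regions agree with those of $X$; combine this with the filtration identities and with Lemma \ref{lemma_local_transversal}, which reduces global transversality of a pair $(q_{1},q_{2})$ to transversality in a single level set or region where everything coincides with $X$. Running the three positional cases (both $q_{i}$ in $M^{a}$, both outside, or one on each side, where the hypothesis guarantees $p\nprec q_{1}$ or $q_{2}\nprec p$) finishes the lemma with no $C^{1}$ estimates; the inclination Lemma \ref{lemma_inclination} is reserved for the genuinely hard case $q_{2}\prec p\prec q_{1}$, which is the content of Lemma \ref{lemma_transversl}, not of this statement. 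So as written your proposal both leaves its key technical step unproved and overshoots the statement; either prove the sheet-control claims (which essentially amounts to redoing, and exceeding, Lemma \ref{lemma_transversl}) or switch to the level-set localization via Lemma \ref{lemma_local_transversal}.
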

\begin{proof}
Clearly, $\mathcal{Y}_{r,s}$ differs from $X$ only in a neighborhood
$U_{r}$ of $p$. When $r$ tends to $0$, $U_{r}$ shrinks to $p$.

We may assume that $f(q) \neq f(p)$ for any critical point $q$ such
that $q \neq p$. If this is not true, perturb $f$ to be a Morse
function $\tilde{f}$ such that $X$ is a negative gradient-like field
for $\tilde{f}$, and $\tilde{f}(x) = f(x) + C$ in a neighborhood $U$
of $p$. Let $r$ be small enough such that $U_{r} \subseteq U$. Then
$\mathcal{Y}_{r,s}$ is also a negative gradient-like field for
$\tilde{f}$. For the rest of the proof we make the above assumption.

Suppose $U_{r} \subseteq M^{a,b}$ and $p$ is the unique singularity
in $M^{a,b}$. As in Definition \ref{definition_invariant_manifold},
we use notation $\mathcal{D}(*; *)$ and $\mathcal{A}(*; *)$ to
indicate the vector fields.

It's easy to see that $\mathcal{D}(p;\mathcal{Y}_{r,s}) \cap U_{r} =
\mathcal{D}(p;X) \cap U_{r}$. Since $\mathcal{Y}_{r,s}$ is identical to $X$ outside $U_{r}$, we have $\mathcal{D}(p;\mathcal{Y}_{r,s}) =
\mathcal{D}(p;X)$. Suppose that $q \in M^{a}$. Since
$\mathcal{Y}_{r,s}$ is identical to $X$ in $M-M^{a,b}$, we have
$\mathcal{A}(q;\mathcal{Y}_{r,s}) \cap M^{a} = \mathcal{A}(q;X) \cap
M^{a}$. Since $X$ satisfies transversality, we infer that $p$ and
$q$ are transversal in $M^{a}$ with respect to $\mathcal{Y}_{r,s}$.
By Lemma \ref{lemma_local_transversal}, $p$ and $q$ are transversal
globally. Similarly, if $q \in M-M^{a}$, $p$ and $q$ are also
transversal. As a result, $p$ and $q$ are transversal.

By the discussion above, it remains to check the case that $q_{1} \neq p$ and $q_{2} \neq p$.

If $p \nprec q$, by Lemma \ref{lemma_filtration}, there exists a
filtration $M_{1}$ such that $q \in \mathrm{Int}M_{1}$ and $p \in
M-M_{1}$. Let $r$ be small enough such that $U_{r} \subseteq
M-M_{1}$, then $\mathcal{Y}_{r,s}$ is identical to $X$ on $M_{1}$.
So $\mathcal{D}(q;\mathcal{Y}_{r,s}) = \mathcal{D}(q;X)$. Similarly,
if $q \nprec p$, we can get $\mathcal{A}(q;\mathcal{Y}_{r,s}) =
\mathcal{A}(q;X)$ when $r$ is small enough. Thus there exists $r_{0}
> 0$ such that the following holds. When $r < r_{0}$, we have, for
all $s \in [0,1]$, $\mathcal{D}(q;\mathcal{Y}_{r,s}) =
\mathcal{D}(q;X)$ if $p \nprec q$, and
$\mathcal{A}(q;\mathcal{Y}_{r,s}) = \mathcal{A}(q;X)$ if $q \nprec
p$.

In order to complete this proof, we only need to check the following
three cases.

(1). Case 1: $q_{1}$ and $q_{2}$ are in $M^{a}$.

Since $\mathcal{Y}_{r,s}$ is identical to $X$ on $M^{a}$ and $X$
satisfies transversality, we have $q_{1}$ and $q_{2}$ are
transversal in $M^{a}$. By Lemma \ref{lemma_local_transversal}, they
are transversal globally with respect to $\mathcal{Y}_{r,s}$.

(2). Case 2: $q_{1}$ and $q_{2}$ are in $M-M^{a}$.

By our assumption that $p$ is the unique critical point in $M^{a,b}$, we infer that $q_{1}$ and $q_{2}$ are actually in $M-M^{b}$. Similarly to Case (1), this case is also true.

(3). Case 3: one of $q_{1}$ and $q_{2}$ is in $M-M^{a}$ and the
other one is in $M^{a}$.

We may presume $q_{1} \in M-M^{a}$ and $q_{2} \in M^{a}$. Then actually $q_{1} \in M-M^{b}$. By the
assumption of this lemma, we have either $p \nprec q_{1}$ or $q_{2}
\nprec p$. Suppose $p \nprec q_{1}$. We have
$\mathcal{D}(q_{1};\mathcal{Y}_{r,s}) = \mathcal{D}(q_{1};X)$. Since $\mathcal{Y}_{r,s} = X$ on $M^{a}$ and
$X$ satisfies transversality, we have $q_{1}$ and $q_{2}$ are
transversal in $M^{a}$ with respect to $\mathcal{Y}_{r,s}$. By Lemma
\ref{lemma_local_transversal}, they are transversal globally.
Similarly, if $q_{2} \nprec p$, this is also true. Thus Case 3 is
also verified.

Since there are only finitely many critical points, we can find $r_{0} >0$ such that all $r < r_{0}$ satisfy the conclusion of this lemma.
\end{proof}

We shall strengthen Lemma \ref{lemma_partial_transversal} to get the
transversality of $\mathcal{Y}_{r,s}$. Recall a classical result on
transversality at first.

Suppose $U$ is a neighborhood of $p$ such that $U$ is identified
with a neighborhood of $0$ in $T_{p} M = H_{1} \oplus H_{2}$, and
$p$ is identified with $0$, where $H_{1} = T_{p} \mathcal{D}(p;X)$
and $H_{2} = T_{p} \mathcal{A}(p;X)$. Furthermore, suppose
$\mathcal{D}(p;X) \cap U \subseteq H_{1}$ and $\mathcal{A}(p;X) \cap
U \subseteq H_{2}$. Then we have the following crucial fact: When
$U$ is small enough, there exists $\Lambda > 0$ such that for any $q_{1}
\succeq p$ and any $x \in \mathcal{D}(q_{1};X) \cap U$, there exists
a linear space $V_{x}^{d} \subseteq T_{x} \mathcal{D}(q_{1};X)$ such
that $\dim (V_{x}^{d}) = \dim (H_{1})$ and the inclination
of $V_{x}^{d}$ with respect to $H_{1}$ is less than $\Lambda$.
Similarly, for any $q_{2} \preceq p$ and any $x \in
\mathcal{A}(q_{2};X) \cap U$, there exists $V_{x}^{a} \subseteq
T_{x} \mathcal{A}(q_{2};X)$ such that $\dim (V_{x}^{a}) =
\dim (H_{2})$ and the inclination of $V_{x}^{a}$ with respect
to $H_{2}$ is also less than $\Lambda$. In addition, $\Lambda$ tends
to $0$ when $U$ shrinks to $p$. This fact follows from the
transversality of $X$ and the estimate of the $\lambda$-Lemma.
(Note: the $\lambda$-Lemma is also named the Inclination Lemma.)
On the contrary, we assume this fact holds but do not assume the
transversality of $X$. If $\Lambda < 1$, then, for any $x \in
\mathcal{D}(q_{1};X) \cap \mathcal{A}(q_{2};X) \cap U$, we have
\[
  T_{x} M = H_{1} \oplus H_{2} = V_{x}^{d} \oplus V_{x}^{a} = T_{x}
  \mathcal{D}(q_{1};X) + T_{x} \mathcal{A}(q_{2};X).
\]
So we infer that $\mathcal{D}(q_{1};X)$ and $\mathcal{A}(q_{2};X)$
are transversal in $U$. The above argument is the key part of the
proof of that, for Morse-Smale dynamical systems, transversality is
preserved under small $C^{1}$ perturbations. All of these are
addressed in \cite[lem.\ 1.11 and thm.\ 3.5]{palis}. In the proof of
the following lemma, we shall apply a similar argument to large
$C^{1}$ perturbations of $X$.

\begin{lemma}\label{lemma_transversl}
Suppose $X$ in addition satisfies transversality. When $r$ is small enough, we
have $\mathcal{Y}_{r,s}$ satisfies transversality for all $s \in
[0,1]$.
\end{lemma}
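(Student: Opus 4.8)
The plan is to combine the local inclination estimate of Lemma~\ref{lemma_inclination} with the $\lambda$-Lemma argument sketched immediately before the statement, exactly as it is used to prove persistence of transversality under $C^1$-small perturbations. First I fix a Morse chart near $p$ in which $X$ is linear, $X(x_1,x_2)=(Ax_1,-Bx_2)$, and I fix a neighborhood $U$ of $p$ of the kind described above (identified with a piece of $T_pM=H_1\oplus H_2$, with $\mathcal D(p;X)\cap U\subseteq H_1$, $\mathcal A(p;X)\cap U\subseteq H_2$), small enough that the constant $\Lambda$ there is less than $1$. Since $\mathcal Y_{r,s}$ agrees with $X$ near $p$ up to the factor in the first slot, $\mathcal D(p;\mathcal Y_{r,s})=\mathcal D(p;X)$ and $\mathcal A(p;\mathcal Y_{r,s})=\mathcal A(p;X)$, and the operators $A_0=I$, $A_1=A$, $B$ appearing in $\mathcal Y_{r,s}$ (after scaling by $1-s$ and $s$, which only changes the constants $\alpha_0,\alpha_1,\beta$ uniformly in $s$) satisfy (\ref{first_operator}) and (\ref{second_operator}) with constants independent of $r$ and $s$. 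Hence Lemma~\ref{lemma_inclination} applies to the flow of $\mathcal Y_{r,s}$ inside the chart, giving: for any prescribed $\epsilon<1$ there is a $\delta>0$, independent of $r$ and $s$, so that any tangent vector entering the chart with inclination $<\delta$ leaves with inclination $<\epsilon$.

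Next I want to produce, for each pair $q_1\succeq p\succeq q_2$ (with respect to $X$) and each $x\in\mathcal D(q_1;\mathcal Y_{r,s})\cap\mathcal A(q_2;\mathcal Y_{r,s})\cap U$, subspaces $V_x^d\subseteq T_x\mathcal D(q_1;\mathcal Y_{r,s})$ of dimension $\dim H_1$ with inclination $<1$ with respect to $H_1$, and $V_x^a\subseteq T_x\mathcal A(q_2;\mathcal Y_{r,s})$ of dimension $\dim H_2$ with inclination $<1$ with respect to $H_2$; then $T_xM=V_x^d\oplus V_x^a\subseteq T_x\mathcal D(q_1;\mathcal Y_{r,s})+T_x\mathcal A(q_2;\mathcal Y_{r,s})$ gives transversality in $U$, and Lemma~\ref{lemma_local_transversal} (since $p\in f^{-1}((a,b))$) upgrades it to global transversality of $q_1$ and $q_2$. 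To get $V_x^d$: by Lemma~\ref{lemma_partial_transversal} the relevant $q_1$ satisfies $p\prec q_1$, so $q_1\succeq p$ and $\mathcal D(q_1;X)$ enters $U$; outside the small ball $U_r$ where $\mathcal Y_{r,s}=X$, the descending manifold $\mathcal D(q_1;\mathcal Y_{r,s})$ coincides with $\mathcal D(q_1;X)$ until the flow first reaches a level just above $p$, and there the classical $\lambda$-Lemma estimate for $X$ provides a subspace of $T\mathcal D(q_1;X)$ with inclination $<\delta$ with respect to $H_1$; flowing this forward by $\mathcal Y_{r,s}$ (which now differs from $X$, but only by the bounded factor analyzed in Lemma~\ref{lemma_inclination}) keeps the inclination $<\epsilon<1$ all the way down to $x$. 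Symmetrically one builds $V_x^a$ from the $\lambda$-Lemma for the backward flow; here one uses that $\mathcal Y_{r,s}$ and $X$ have the same $\mathcal A(p;\cdot)$ and that outside $U_r$ the backward flows agree, so the ascending manifold of $q_2$ reaches $U$ along the same route, and again Lemma~\ref{lemma_inclination} (applied to $-\mathcal Y_{r,s}$, i.e. with the roles of the contracting and expanding directions swapped, which is the symmetric statement) controls the inclination with respect to $H_2$.

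The step I expect to be the main obstacle is the bookkeeping that makes the passage ``from $X$'s $\lambda$-Lemma near $q_1$ down into $U$ via $\mathcal Y_{r,s}$'' uniform in $r$ and $s$: one must check that the finitely many critical points $q_1$ with $p\prec q_1$ are handled simultaneously, that a single $\epsilon<1$ (hence a single $\delta$) suffices for all of them, that $r$ can then be chosen small enough that $U_r$ lies below all these $q_1$ and above nothing problematic, and that the time the flow spends in the transition region between $q_1$ and $U$ — where $\mathcal Y_{r,s}=X$ — does not blow up as $r\to 0$ (it does not, because that region is bounded away from $p$ and independent of $r$). Once the constants are pinned down in the right order ($U$ and $\Lambda<1$ first, then $\epsilon$ and $\delta$ from Lemma~\ref{lemma_inclination}, then $r_0$ from Lemma~\ref{lemma_partial_transversal} and the filtration argument), every pair of critical points falls into one of the cases already treated in Lemma~\ref{lemma_partial_transversal} or into the case just handled, and transversality of $\mathcal Y_{r,s}$ follows for all $s\in[0,1]$ once $r<r_0$.
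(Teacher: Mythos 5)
Your overall strategy (reduce to the pairs $q_2\prec p\prec q_1$ via Lemma~\ref{lemma_partial_transversal}, transport a descending subspace through the perturbed region with Lemma~\ref{lemma_inclination}, and conclude by the direct-sum criterion from the $\lambda$-Lemma discussion) is the paper's strategy, but there is a genuine gap at the point where you build $V_x^a$. You propose to control the ascending directions by applying Lemma~\ref{lemma_inclination} to $-\mathcal{Y}_{r,s}$ ``with the roles of the contracting and expanding directions swapped, which is the symmetric statement.'' It is not symmetric: the hypotheses of Lemma~\ref{lemma_inclination} require the bump-function interpolation to sit in the \emph{expanding} component while the contracting component is the unperturbed linear field $-Bx_2$, and this is exactly the structure of $\mathcal{Y}_{r,s}$ for the forward flow. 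For the time-reversed field the interpolated, nonlinear part sits in the \emph{contracting} component, and the mirrored estimate breaks down: the cross-derivative term (the analogue of $D_{2}\phi^{1}$, which in the proof of Lemma~\ref{lemma_inclination} is only bounded by a constant $K_{2}$ that is not small) now multiplies the dominant component $v_2$ and lands in the numerator of the inclination, while the denominator only grows by a bounded factor during the bounded passage time $T$. So smallness of the initial inclination with respect to $H_2$ does not force the final inclination to be less than $1$, and no statement in the paper supplies such a backward estimate. Since inside the perturbed region $T_x\mathcal{A}(q_2;\mathcal{Y}_{r,s})$ genuinely differs from $T_x\mathcal{A}(q_2;X)$ (the perturbation is $C^1$-large), you cannot get $V_x^a$ from the $\lambda$-Lemma for $X$ either; this step is where your argument fails.

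The paper's proof is organized precisely to avoid needing any backward estimate: it verifies transversality not at interior points of $U$ but on the exit level $f^{-1}(f(p)-\epsilon)$, inside a flow-saturated neighborhood $U_0$ of $p$. Below that level $\mathcal{Y}_{r,s}=X$, so $T_y\mathcal{A}(q_2;\mathcal{Y}_{r,s})=T_y\mathcal{A}(q_2;X)$ there and the $\lambda$-Lemma for $X$ hands you $V_y^a$ with inclination less than $1$ for free; only the descending subspace $V^d$ (entering at the top level with inclination less than $\delta$, again by the $\lambda$-Lemma for $X$, after shrinking $U_0$) is transported forward through the perturbed region, which is exactly what Lemma~\ref{lemma_inclination} covers with $\delta$ independent of $r$ and $s$. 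Transversality of $\mathcal{D}(q_1;\mathcal{Y}_{r,s})$ and $\mathcal{A}(q_2;\mathcal{Y}_{r,s})$ is then established on the whole level $f^{-1}(f(p)-\epsilon)$ (in $U_0$ by the inclination argument, outside $U_1$ because both invariant manifolds coincide with those of $X$), and globalized by the \emph{second} clause of Lemma~\ref{lemma_local_transversal}. Your appeal to that lemma ``since $p\in f^{-1}((a,b))$'' is also a misreading: the first clause requires one of the two critical points being tested, i.e.\ $q_1$ or $q_2$, to lie in the band, which is false here, and transversality only on $U$ does not reach flow lines from $q_1$ to $q_2$ that cross the band outside $U$. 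Both defects are repaired by adopting the paper's choice of where to test transversality; as written, your construction of $V_x^a$ does not go through.
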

\begin{proof}
By Lemma \ref{lemma_partial_transversal}, it suffices to prove that
$\mathcal{D}(q_{1}; \mathcal{Y}_{r,s})$ is transverse to
$\mathcal{A}(q_{2}; \mathcal{Y}_{r,s})$ if $q_{2} \prec p \prec
q_{1}$.

Similarly to the proof of Lemma \ref{lemma_partial_transversal}, we
assume that $p$ is the unique critical point in
$M^{f(p)-\epsilon,f(p)+\epsilon}$. Let $U$ be the neighborhood of
$p$ in the argument before this lemma. Let $D$ be an open subset of
$f^{-1}(f(p)+\epsilon) \cap U$ such that $D \supseteq
f^{-1}(f(p)+\epsilon) \cap \mathcal{A}(p;X)$. Let $U_{0} =
[\phi([0,+\infty),D) \cup \mathcal{D}(p;X)] \cap
M^{f(p)-\epsilon,f(p)+\epsilon}$. Then $U_{0}$ is a neighborhood of
$p$ and is relatively open in $M^{f(p)-\epsilon,f(p)+\epsilon}$.
When $\epsilon$ tends to $0$ and $D$ shrinks, $U_{0}$ shrinks to
$p$. (In Figure \ref{figure_neighborhood}, the shadowed part is
$U_{0}$, the arrows indicate the the directions of the flows.)
Denote the flow generated by $\mathcal{Y}_{r,s}$ by
$\phi^{r,s}_{t}$.

\begin{figure}[!htbp]
\centering
\includegraphics[scale=0.4]{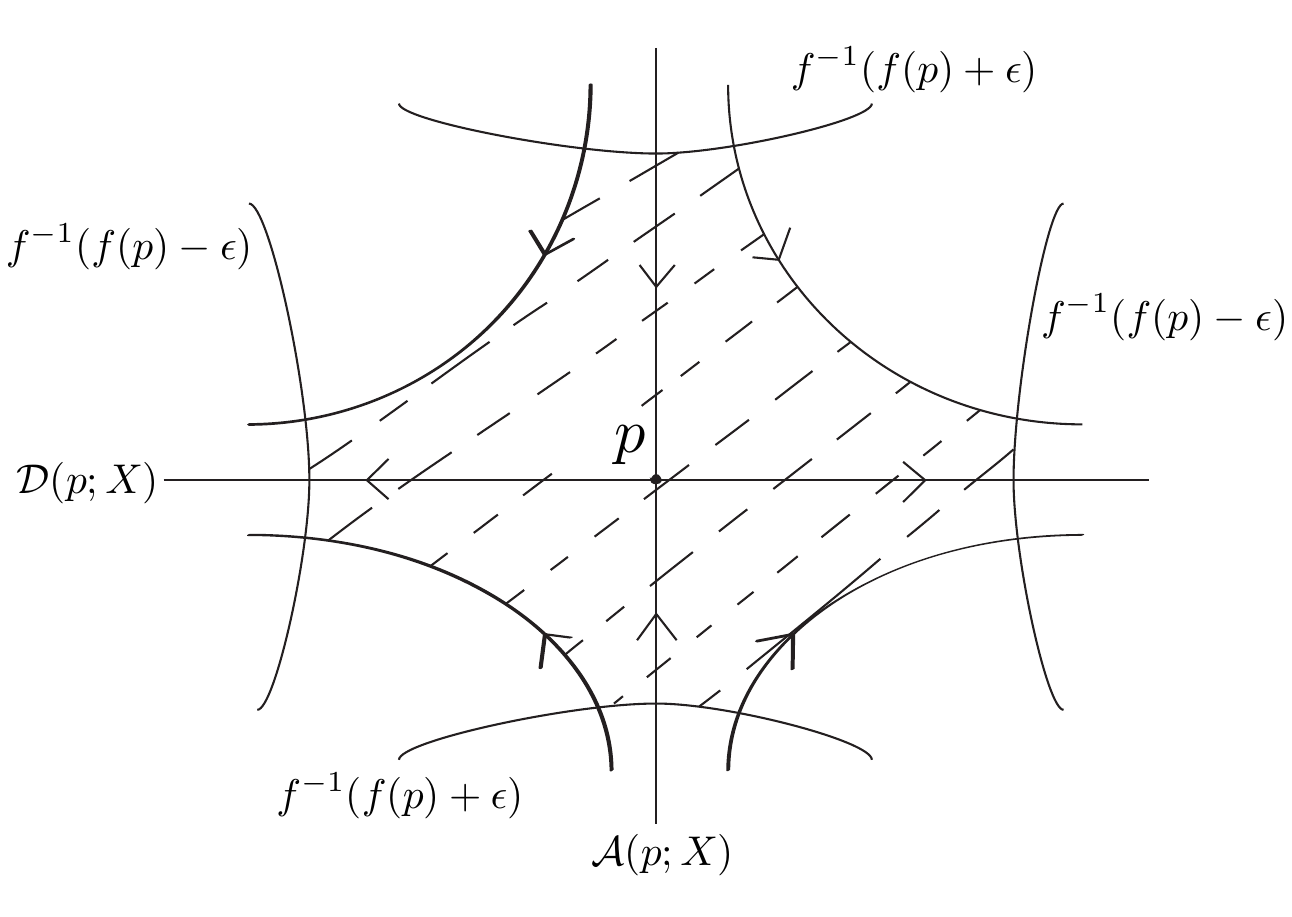} \caption{Neighborhood $U_{0}$}
\label{figure_neighborhood}
\end{figure}

By the construction of $U_{0}$, we know that, for any $z \in M^{f(p) - \epsilon, f(p) + \epsilon} - U_{0}$, the orbit of $\phi_{t}(x)$ has no intersection with $U_{0}$. Similarly, we can construct $U_{1} \subseteq U_{0}$ such that $U_{1}$ is a closed neighborhood of $p$, and, for any $z \in M^{f(p) - \epsilon, f(p) + \epsilon} - U_{1}$, the orbit of $\phi_{t}(x)$ has no intersection with $U_{1}$. Let $r$ be small enough such that $\mathcal{Y}_{r,s}$ is identical to $X$ out of $U_{1}$. We infer that, for any $z \in M^{f(p) - \epsilon, f(p) + \epsilon} - U_{i}$, the orbit of $\phi_{t}^{r,s}(z)$ also has no intersection with $U_{i}$, and $\phi_{t}^{r,s}(z) = \phi_{t}(z)$, where $i=0,1$. Then, for any $x \in f^{-1}(f(p) + \epsilon) \cap U_{0} - \mathcal{A}(p;X)$, we have $\phi_{t}^{r,s}(x) \in f^{-1}(f(p) - \epsilon)$ for some $t>0$ and $\phi^{r,s}([0,t],x) \subset U_{0}$. Therefore, Lemma \ref{lemma_inclination} is applicable to the flow segment $\phi^{r,s}([0,t],x)$. Now we apply that Lemma.

We know that
\[
  \mathcal{Y}_{r,s}(x_{1},x_{2}) = (\rho_{r}(x_{1}) \rho_{r}(x_{2}) (sI + (1-s)A) x_{1} +
  [1- \rho_{r}(x_{1}) \rho_{r}(x_{2})] A x_{1}, - B x_{2}),
\]
and there exist $\alpha_{0} > 0$, $\alpha_{1} > 0$ and $\beta > 0$
such that, for any $s \in [0,1]$, we have
\[
  \alpha_{0} I \leq sI + (1-s)A \leq \alpha_{1} I, \qquad \alpha_{0} I \leq A \leq \alpha_{1}
  I, \qquad \text{and} \quad   \beta I \leq B.
\]
By Lemma \ref{lemma_inclination}, there exists $\delta > 0$ such
that the following holds. Suppose $x \in \mathcal{D}(q_{1};X) \cap
f^{-1}(f(p)+\epsilon) \cap U_{0}$, and $V_{x}^{d} \subseteq T_{x}
\mathcal{D}(q_{1};X)$ is the space described before this lemma. If
the inclination of $V_{x}^{d}$ with respect to $H_{1}$ is less than
$\delta$, then, in $U_{0}$, the inclination of
$D \phi^{r,s}_{t}(V_{x}^{d})$ with respect to $H_{1}$ is less than
$1$. It's necessary to point out that $\delta$ is independent of $r$
and $s$.

Clearly, $\mathcal{D}(q_{1};X) \cap f^{-1}([f(p) + \epsilon,
+\infty)) = \mathcal{D}(q_{1};\mathcal{Y}_{r,s}) \cap f^{-1}([f(p) +
\epsilon, +\infty))$ and $\mathcal{A}(q_{2}; X) \cap
M^{f(p)-\epsilon} = \mathcal{A}(q_{2}; \mathcal{Y}_{r,s}) \cap
M^{f(p)-\epsilon}$. Since $X$ satisfies transversality, by the
argument before this lemma, we can choose $U_{0}$ be small enough
such that the following holds. For any $x \in \mathcal{D}(q_{1};X)
\cap f^{-1}(f(p)+\epsilon) \cap U_{0}$, the inclination of
$V_{x}^{d}$ with respect to $H_{1}$ is less than $\delta$, and, for
any $y \in \mathcal{A}(q_{2};X) \cap f^{-1}(f(p)-\epsilon) \cap
U_{0}$, the inclination of $V_{y}^{a}$ with respect to $H_{2}$ is
less than $1$. Here $V_{x}^{d} \subseteq T_{x} \mathcal{D}(q_{1};X)
= T_{x} \mathcal{D}(q_{1};\mathcal{Y}_{r,s})$ and $V_{y}^{a}
\subseteq T_{y} \mathcal{A}(q_{2}; X) = T_{y} \mathcal{A}(q_{2};
\mathcal{Y}_{r,s})$. Thus, if $\phi_{t}^{r,s}(x) = y$, then the
inclination of $V_{y}^{d} = D \phi_{t}^{r,s} \cdot V_{x}^{d}$ with
respect to $H_{1}$ is less than $1$. Here $V_{y}^{d} \subseteq T_{y}
\mathcal{D}(q_{1};\mathcal{Y}_{r,s})$. By the argument before this
lemma again, we have $T_{y} M = V_{y}^{d} \oplus V_{y}^{a}$. So
$\mathcal{D}(q_{1};\mathcal{Y}_{r,s})$ and $\mathcal{A}(q_{2};
\mathcal{Y}_{r,s})$ are transversal in $f^{-1}(f(p)-\epsilon) \cap
U_{0}$.

Furthermore, $\mathcal{D}(q_{1};X) \cap
(M^{f(p)-\epsilon,f(p)+\epsilon} - U_{1})=
\mathcal{D}(q_{1};\mathcal{Y}_{r,s}) \cap
(M^{f(p)-\epsilon,f(p)+\epsilon} - U_{1})$ and $\mathcal{A}(q_{2};X)
\cap (M^{f(p)-\epsilon,f(p)+\epsilon} - U_{1})=
\mathcal{A}(q_{2};\mathcal{Y}_{r,s}) \cap
(M^{f(p)-\epsilon,f(p)+\epsilon} - U_{1})$. Thus
$\mathcal{D}(q_{1};\mathcal{Y}_{r,s})$ and
$\mathcal{A}(q_{2};\mathcal{Y}_{r,s})$ are transversal in
$M^{f(p)-\epsilon,f(p)+\epsilon} - U_{1}$.

In summary, $\mathcal{D}(q_{1};\mathcal{Y}_{r,s})$ and
$\mathcal{A}(q_{2};\mathcal{Y}_{r,s})$ are transversal in
$f^{-1}(f(p) - \epsilon)$. By Lemma \ref{lemma_local_transversal},
they are transversal globally.

Since there are only finitely many critical points, we can find $r_{0} >0$ such that all $r < r_{0}$ satisfy the conclusion of this lemma.
\end{proof}

\begin{proof}[Proof of Theorem \ref{theorem_regular_path}]
First, we construct the regular path. Since the number of critical points is finite, it suffices to prove that, for
any critical point $p$, we can construct a regular path
$\mathcal{Y}$ such that $\mathcal{Y}_{0} = X$ and $\mathcal{Y}_{1}$
is locally trivial at $p$.

By Theorem \ref{theorem_Morse}, there exists a coordinate chart $U$
near $p$ such that $p$ has coordinate $(0,0)$,
\begin{equation}\label{theorem_regular_path_2}
  f(x_{1}, x_{2}) = f(p) - \frac{1}{2} \langle x_{1}, x_{1} \rangle + \frac{1}{2} \langle
  x_{2}, x_{2} \rangle,
\end{equation}
$\mathcal{D}(p;X) \cap U = \{(x_{1},0)\}$ and $\mathcal{A}(p;X) \cap
U = \{(0,x_{2})\}$. We immediately see that $DX(p)$ is a diagonal
\begin{equation}\label{theorem_regular_path_1}
DX(p) =
\begin{bmatrix}
A & 0 \\
0 & -B
\end{bmatrix},
\end{equation}
where $A$ and $B$ are symmetric and positive definite. By (\ref{theorem_regular_path_1}), the vector field $(A x_{1}, -B
x_{2})$ is the linearization of $X$ at $p$. By (\ref{theorem_regular_path_2}), it is also negative gradient-like for $f$ near $p$.

Let $\rho_{r}$ be the bump function defined before. For convenience,
for all $x = (x_{1}, x_{2})$, let $\rho_{r}(x)$ denote $\rho_{r}(\|x\|)$. Let $R(x) = X(x) - (A x_{1}, -B x_{2})$. Then we have
$\| \rho_{r}(x) R(x) \|$ and $\| D [\rho_{r}(x) R(x)] \|$ tend to
$0$ when $r$ tends to $0$. Since the transversality of $X$ is
preserved under small $C^{1}$ perturbations, we have
$\mathcal{Z}_{s} = X - s \rho_{r} R$ is a regular path when $r$ is
small enough and $s \in [0,1]$. By Lemma \ref{lemma_combine_field}, $\mathcal{Z}_{1}$ and then each $\mathcal{Z}_{s} = (1-s) X + s \mathcal{Z}_{1}$ are negative gradient-like for $f$. Since $\mathcal{Z}_{1}(x) = (A
x_{1}, -B x_{2})$ near $p$, by Lemma \ref{lemma_transversl}, we can
construct a regular path $\mathcal{Z}_{[1,2]}$ such that
$\mathcal{Z}_{2}(x) = (x_{1}, -B x_{2})$ near $p$. Since
$-\mathcal{Z}_{2}$ is a negative gradient-like field for $-f$, using
Lemma \ref{lemma_transversl} again, we can construct a regular path
$-\mathcal{Z}_{[2,3]}$ for $-f$ such that $-\mathcal{Z}_{3}(x) = (-x_{1},
x_{2})$ near $p$. We get the desired path by defining
$\mathcal{Y}_{s} = \mathcal{Z}_{3s}$.

Second, we prove the existence of the conjugacy $h$.

By the proof in \cite[thm.\ 5.2]{palis_smale}, we know that, for
each $\mathcal{Y}_{s_{0}}$, there is a topological equivalence
$h_{s_{0}}$ between $\mathcal{Y}_{s_{0}}$ and $\mathcal{Y}_{s}$ such
that $h_{s_{0}}(p) = p$ for all critical points $p$ when $s$ is
close to $s_{0}$ enough. In addition, since the flow generated by
$\mathcal{Y}_{s_{0}}$ has no closed orbits, by the comment in
\cite[p.\ 231]{palis_smale}, we know that $h_{s_{0}}$ is actually a
conjugacy. Thus it's easy to get the desired conjugacy $h$.
\end{proof}

\begin{remark}\label{remark_morse_path}
In order to guarantee that the path $\mathcal{Y}$ in Theorem \ref{theorem_regular_path} is negative gradient-like for $f$, we need to find a Morse chart satisfying both (\ref{theorem_regular_path_2}) and (\ref{theorem_regular_path_1}). Theorem \ref{theorem_Morse} trivially yields this chart. (Actually, Theorem \ref{theorem_Morse} provides us more than what we actually need.) It's necessary to point out that a general Morse chart does \textit{not} necessarily satisfy (\ref{theorem_regular_path_1}) because $DX(p)$ depends on the metric. Thus the usual Morse Lemma is not sufficient for us. This special Morse chart may be constructed without using Theorem \ref{theorem_Morse}. Nevertheless, we present this theorem because it may be of independent interest.
\end{remark}

\begin{remark}\label{remark_path}
The regular path in \cite[lem.\ 2]{newhouse_peixoto} consists of the
Morse-Smale vector fields without closed orbits. In this case, $D
X(p) = \text{diag} (A, -B)$ for singularities $p$, where $A$ and $B$ are linear
isomorphisms whose eigenvalues have positive real parts. The paper
\cite{newhouse_peixoto} claims that there exists a regular path
connecting $X$ with $Y$ such that $Y(x_{1}, x_{2}) = (2x_{1}, -
2x_{2})$ near each singularity. Thus, in the setting of dynamical
systems, this result is more general than Theorem
\ref{theorem_regular_path}. However, Theorem
\ref{theorem_regular_path} has the advantage that its vector fields
are negative gradient-like for $f$. Furthermore, the argument in this paper
can also be used to verify the result in \cite{newhouse_peixoto}.
This is because we can choose a metric near each critical point, for
example, by the real Jordan canonical form, such that the above
operators $A$ and $B$ satisfy (\ref{first_operator}) and
(\ref{second_operator}).
\end{remark}

%--------------------------------------------------------------------------------------------------------------------
%--------------------------------------------------------------------------------------------------------------------
\section{A Reduction Lemma}\label{section_reduction_lemma}
In this paper, we shall prove theorems for noncompact manifolds with
proper Morse functions. However, the manifold in Theorem
\ref{theorem_regular_path} is required to be compact. (As we have seen, the proof of Theorem \ref{theorem_regular_path} heavily relies on the compactness of the manifold.) The following
lemma reduces the proper case to the compact case.

\begin{lemma}\label{lemma_reduction}
Suppose $M$ is a compact manifold with boundary $\partial M = M_{1}
\sqcup M_{2}$. Here $M_{i}$ ($i=1,2$) may be empty. Suppose $f$ is a
Morse function on $M$ such that $f|_{M_{1}} \equiv a$, $f|_{M_{2}}
\equiv b$, $a$ and $b$ are regular values of $f$, and $a < b$.
Suppose $X$ is a negative gradient-like vector field for $f$, and
$X$ satisfies transversality. Then there exist a compact manifold
$\widetilde{M}$ without boundary and a smooth embedding $i: M
\hookrightarrow \widetilde{M}$ such that the following holds. There
exist a Morse function $\widetilde{f}$ and its negative
gradient-like vector field $\widetilde{X}$ on $\widetilde{M}$.
They are extensions of $f$ and $X$ respectively, and $\widetilde{X}$
satisfies transversality. For any critical points $p$ and $q$ in
$M$, we have $\mathcal{D}(p; \widetilde{X}) \cap \mathcal{A}(q;
\widetilde{X}) = \mathcal{D}(p;X) \cap \mathcal{A}(q;X)$.
Furthermore, $\mathcal{D}(p; \widetilde{X}) = \mathcal{D}(p;X)$ and
$\widetilde{f}|_{\widetilde{M} - M} > b$ if $M_{1} = \emptyset$; and
$\mathcal{A}(p; \widetilde{X}) = \mathcal{A}(p;X)$ and
$\widetilde{f}|_{\widetilde{M} - M} < a$ if $M_{2} = \emptyset$.
\end{lemma}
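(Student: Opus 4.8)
The plan is to take $\widetilde M$ to be the double of $M$ along its whole boundary, $\widetilde M=M\cup_{\partial M}M'$ with $M'$ a second copy of $M$; this is always a closed smooth manifold containing $M$. On $M$ we keep $\widetilde f=f$ and $\widetilde X=X$. On $M'$ we arrange $\widetilde f$ so that, crossing $M_1$, its value drops strictly \emph{below} $a$ and, crossing $M_2$, it rises strictly \emph{above} $b$. This is in fact forced: $a$ and $b$ are regular values and each $M_i$ is a level set, so $f$ increases off $M_1$ and decreases off $M_2$ into $\operatorname{int}M$, and for $\widetilde f$ to be $C^{1}$ across $\partial M$ (equivalently, for $\widetilde X$ to be a smooth negative gradient-like field there) it must behave oppositely on the $M'$ side. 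Hence $\widetilde f|_{M'}$ must have a ``dip'' to a minimum of value $<a$ under $M_1$ and a ``bump'' to a maximum of value $>b$ over $M_2$.

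I would carry this out in four steps. (i) Choose collars $M_1\times[0,\varepsilon)\hookrightarrow M$ and $M_2\times(-\varepsilon,0]\hookrightarrow M$ in which $f$ is the linear coordinate; double them and extend $\widetilde f$ by the \emph{same} linear coordinate and $\widetilde X$ smoothly across $\partial M$. This gives a smooth $\widetilde f$ and a smooth negative gradient-like $\widetilde X$ near $\partial M$ in $\widetilde M$, restricting to $f,X$ on the $M$ side, with no critical points near $\partial M$ and with the dip/bump germ above. (ii) Extend $\widetilde f$ over the rest of $M'$ to a Morse function with this prescribed germ along $\partial M'$; the germ is non-generic, but a $y$-dependent perturbation localized in the transition collars makes all critical points nondegenerate and isolated, producing in particular a minimum of value $<a$ and a maximum of value $>b$ in $M'$. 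Take any negative gradient-like $\widetilde X$ for this $\widetilde f$ on $M'$ agreeing with the collar field near $\partial M'$. (iii) Perturb $\widetilde X$ on $\operatorname{int}M'$, away from a collar of $\partial M'$, so that $\widetilde X$ satisfies transversality on all of $\widetilde M$. (iv) Verify the flow-line assertions.

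Step (iv) is a uniform ``trapping'' argument. For critical points $p,q$ of $f$ in $M$, a point $x\in\mathcal D(p;\widetilde X)\cap\mathcal A(q;\widetilde X)$ cannot lie in $M'$: in the bump region the backward $\widetilde X$-orbit of $x$ is trapped climbing to the maximum of $\widetilde f|_{M'}$ and never reaches $p$, while in the dip region its forward orbit is trapped descending to the minimum and never reaches $q$; and it cannot lie on $\partial M$ for the same reasons. So $x\in\operatorname{int}M$, and there its whole $\widetilde X$-orbit stays in $M$ (going backward it moves away from $M_1$ in the value of $\widetilde f$, and if it approached $M_2$ it would pass into the bump and could not still converge to $p$; dually forward), so $x$ lies on an honest $X$-orbit from $p$ to $q$, i.e.\ $x\in\mathcal D(p;X)\cap\mathcal A(q;X)$. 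The reverse inclusion is immediate since $\widetilde X|_M=X$ and such an orbit stays in $f^{-1}((a,b))\subseteq\operatorname{int}M$. If $M_1=\varnothing$ there is no dip region and we may in addition take $\widetilde f|_{M'}>b$ off $M_2'$ (any Morse function on $M'$ that equals $b$ on $\partial M'$, is $>b$ inside, and has the prescribed inward germ will do); the same argument then gives $\mathcal D(p;\widetilde X)=\mathcal D(p;X)$ for every critical $p$, together with $\widetilde f|_{\widetilde M\setminus M}>b$. The case $M_2=\varnothing$ is dual.

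The main obstacle is step (iii). We are \emph{not} free to perturb $X$ on $M$, so transversality on $\widetilde M$ must be obtained by perturbing $\widetilde X$ only on $\operatorname{int}M'$ while keeping it negative gradient-like for $\widetilde f$ and unchanged near $\partial M'$. What makes this routine is that the invariant manifolds of the critical points of $f$ in $M$ meet $M'$ only inside the dip and bump regions, where near $\partial M'$ they are cut out by flowing the \emph{fixed} traces on $M_1$ and $M_2$; hence a generic choice of $\widetilde X$ on $\operatorname{int}M'$ makes these transverse to the invariant manifolds of the new critical points in $M'$ and makes the $M'$-pairs mutually transverse. The existence of such generic choices is the standard genericity of transversality for negative gradient-like fields (cf.\ \cite{palis_de}, \cite{qin}), and the fact that $\widetilde X$ is pinned down on $M$ and near $\partial M'$ — regions disjoint from the critical points that must be perturbed — does not obstruct it. A minor technical point is the construction in step (ii), handled by the usual localized perturbation (cf.\ \cite[thm.\ 4.1]{milnor2}).
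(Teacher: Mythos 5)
Your overall strategy---pass to the double of $M$, extend $f$ and $X$, repair transversality by modifying $\widetilde{X}$ only off $M$, and then use monotonicity of $\widetilde{f}$ along orbits to trap all connecting orbits between critical points of $M$ inside $M$---is the same as the paper's, and your steps (i), (ii), (iv) and the treatment of the cases $M_{1}=\emptyset$, $M_{2}=\emptyset$ essentially reproduce its proof. The problem is step (iii), which is where the entire content of the lemma lies, and which you dispose of by appealing to ``standard genericity of transversality'' for fields that are pinned on $M$ and near $\partial M'$. That is not an off-the-shelf theorem, and as stated it hides the two points that actually have to be proved.

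First, there are pairs for which no perturbation supported in $\text{Int}\,M'$ can produce transversality in the usual way: your extension does not exclude new critical points $\widetilde{q}\in M'$ with $\widetilde{f}(\widetilde{q})\in(a,b)$ (indeed, when $M_{1}$ and $M_{2}$ are both nonempty, $\widetilde{f}|_{M'}$ must pass through the band of values $[a,b]$), and for such $\widetilde{q}$ paired with a critical point $p\in M$, in general every regular level of $\widetilde{f}$ separating the two meets $M$, so the standard ``isotope the descending sphere in an intermediate level'' perturbation has no admissible support. Transversality for these pairs holds only because the intersections $\mathcal{D}(\widetilde{q};\widetilde{X})\cap\mathcal{A}(p;\widetilde{X})$ and $\mathcal{A}(\widetilde{q};\widetilde{X})\cap\mathcal{D}(p;\widetilde{X})$ are in fact empty, which must be proved (the paper does this in Step 3 of its proof, via $\mathcal{D}(\widetilde{q};\widetilde{X})\subseteq\widetilde{M}^{b}-M$ and $\mathcal{A}(p;\widetilde{X})\cap\widetilde{M}^{a,b}\subseteq M$); your blanket genericity claim silently covers this case without an argument. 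Second, for the pairs that genuinely need a perturbation---a new critical point below level $a$ against a critical point of $M$, a new one above level $b$ against one of $M$, or two new ones whose connecting orbits travel through $M$---the admissible supports are forced into $\{\widetilde{f}<a\}$ or $\{\widetilde{f}>b\}$, and the modifications must be organized so that later ones do not destroy transversality already achieved, while never touching $M$. This is exactly why the paper runs Milnor's sliding argument (\cite[thm.\ 5.2]{milnor2}) in a specific combination and order: ascending manifolds are slid for the critical points below level $a$ (the modification band lies just above such a point, hence below $a$ and off $M$), descending manifolds for those above, processed according to their critical values, with Lemma \ref{lemma_local_transversal} guaranteeing that what was already arranged survives. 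Your genericity claim can be converted into a proof, but only by supplying precisely this case analysis and ordering; as written, step (iii) is a gap.
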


The proof of Lemma \ref{lemma_reduction} is based on Milnor's sliding invariant (descending or
ascending) manifolds in \cite[thm.\ 5.2]{milnor2}. Suppose $g$ is a Morse function on a compact manifold and $\xi$ is a negative gradient-like field for $g$. Basically, there are two ways of modifying $\xi$ to get
transversality. Method 1 is sliding the descending manifolds one by
one with the order from critical points with lower values to those
with higher values. In this case, one repeats using \cite[thm.\ 5.2]{milnor2} to slide each $\mathcal{D}(p)$ such that $\mathcal{D}(p)$ becomes transverse to all $\mathcal{A}(q)$ if $g(q) < g(p)$. On the contrary, Method 2 is sliding the
ascending manifolds one by one with the order from critical points
with higher values to those with lower values. A key point is that one only needs to change the filed $\xi$ in an arbitrarily small neighborhood of $p$ when sliding the invariant manifolds of $p$. Our method is a
combination of the above two methods.

\begin{proof}[Proof of Lemma \ref{lemma_reduction}]
If $\partial M = \emptyset$, let $\widetilde{M} = M$, the proof is
finished. Now we assume $\partial M \neq \emptyset$.

Let $\widetilde{M}$ be the double of $M$. Extend $f$ to be a Morse function
$\widetilde{f}$ such that $a$ and $b$ are regular values of $\widetilde{f}$, and
extend $X$ to be $\widetilde{X}$ which is a negative gradient-like
field for $\widetilde{f}$. (Figure \ref{figure_tilde_m} illustrates
the manifold $\widetilde{M}$, where the Morse function is the height
function and the shadowed part is $M$.) We shall modify
$\widetilde{X}$ such that it satisfies transversality.

\begin{figure}[!htbp]
\centering
\includegraphics[scale=0.4]{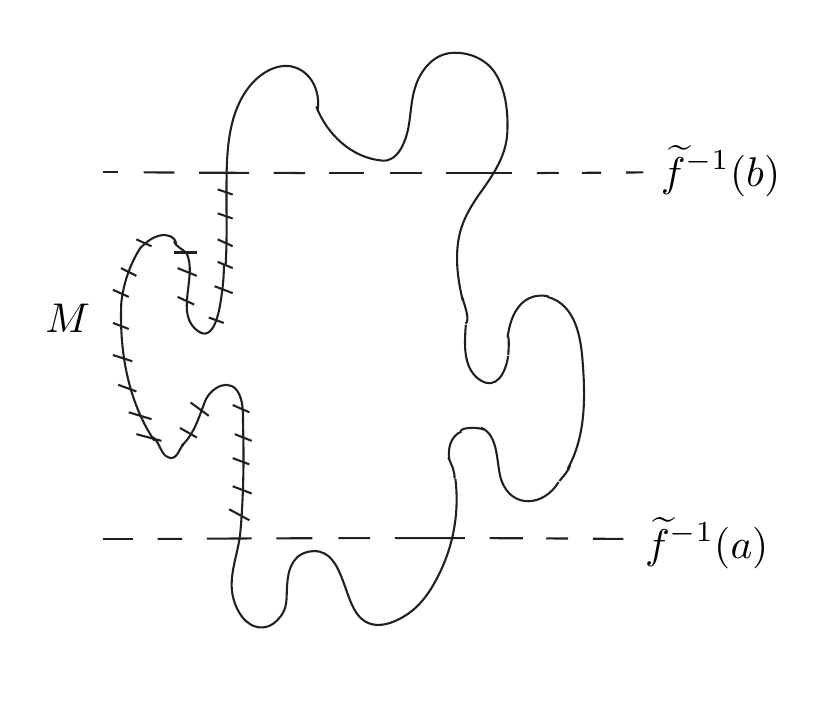} \caption{Manifold $\widetilde{M}$}
\label{figure_tilde_m}
\end{figure}

In this proof, we say two critical points $\widetilde{p}$ and
$\widetilde{q}$ of $\widetilde{f}$ are transversal if they are
transversal with respect to $\widetilde{X}$.

Step 1: we show the transversality between $p \in M$ and $q \in M$.
Since $\mathcal{D}(p; \widetilde{X}) \subseteq M \cup \mathrm{Int}
\widetilde{M}^{a}$, we have $\mathcal{D}(p; \widetilde{X}) \cap
\widetilde{M}^{a,b} = \mathcal{D}(p; \widetilde{X}) \cap M =
\mathcal{D}(p; X)$. Similarly, $\mathcal{A}(p; \widetilde{X}) \cap
\widetilde{M}^{a,b} = \mathcal{A}(p; X)$. Since $X$ satisfies
transversality, $p$ and $q$ are transversal in
$\widetilde{M}^{a,b}$. By Lemma \ref{lemma_local_transversal}, they
are transversal globally. This shows the transversality between $p$
and $q$ does not depend on the extension of $X$. So, no matter how
$\widetilde{X}$ is changed outside of $M$, $p$ and $q$ are always
transversal if they are in $M$.

Step 2: we modify $\widetilde{X}$ in $\widetilde{M}^{a}$. We make
modifications near each critical point $\widetilde{p}$ in
$\widetilde{M}^{a}$ with the order from critical points with higher
values to those with lower values. Slide $\mathcal{A}(\widetilde{p};
\widetilde{X})$ for each $\widetilde{p} \in \widetilde{M}^{a}$ such
that $\widetilde{p}$ is transverse to each $\widetilde{q} \in M \cup
\widetilde{M}^{a}$ with $\widetilde{f}(\widetilde{q}) >
\widetilde{f}(\widetilde{p})$. (Here, for all $\widetilde{q} \in M$,
we have $\widetilde{f}(\widetilde{q}) >
\widetilde{f}(\widetilde{p})$.) Thus, for all $\widetilde{p}$ and
$\widetilde{q}$ in $M \cup \widetilde{M}^{a}$, they are transversal
globally after these modifications. By Lemma
\ref{lemma_local_transversal} and Step 1, no matter how
$\widetilde{X}$ is changed outside of $M \cup \widetilde{M}^{a}$,
$\widetilde{p}$ and $\widetilde{q}$ are still transversal globally
because they are still transversal in $\widetilde{M}^{a}$.

Step 3: we modify $\widetilde{X}$ in $\widetilde{M}^{b} - [M \cup
\widetilde{M}^{a}]$. To do this, we slide
the descending manifolds with the order from critical points with
lower values to those with higher values. More precisely, slide
$\mathcal{D}(\widetilde{p}; \widetilde{X})$ for each $\widetilde{p}
\in \widetilde{M}^{b} - [M \cup \widetilde{M}^{a}]$ such that
$\widetilde{p}$ is transverse to all $\widetilde{q} \in
\widetilde{M}^{b} - M$ with $\widetilde{f}(\widetilde{q}) <
\widetilde{f}(\widetilde{p})$. (Here, for all $\widetilde{q} \in
\widetilde{M}^{a}$, we have $\widetilde{f}(\widetilde{q}) <
\widetilde{f}(\widetilde{p})$.) We claim that, for all
$\widetilde{p}$ and $\widetilde{q}$ in $\widetilde{M}^{b}$, they are
transversal. By what we have already proved, it remains to show that, for each $p \in M$ and
$\widetilde{q} \in \widetilde{M}^{a,b} - M$, we have $p$ and
$\widetilde{q}$ are transversal. Clearly,
$\mathcal{D}(\widetilde{q}; \widetilde{X}) \subseteq
\widetilde{M}^{b} - M$, thus $\mathcal{D}(\widetilde{q};
\widetilde{X}) \cap \widetilde{M}^{a,b} \subseteq
\widetilde{M}^{a,b} - M$. Since $\mathcal{A}(p; \widetilde{X}) \cap
\widetilde{M}^{a,b} \subseteq M$, we get $\mathcal{D}(\widetilde{q};
\widetilde{X}) \cap \mathcal{A}(p; \widetilde{X}) \cap
\widetilde{M}^{a,b} = \emptyset$. So $\mathcal{D}(\widetilde{q};
\widetilde{X}) \cap \mathcal{A}(p; \widetilde{X}) = \emptyset$.
Similarly, $\mathcal{A}(\widetilde{q}; \widetilde{X}) \cap
\mathcal{D}(p; \widetilde{X}) = \emptyset$. We infer that $p$ and
$\widetilde{q}$ are transversal. The above claim is proved. By Lemma
\ref{lemma_local_transversal} again, no matter how
$\widetilde{X}$ is changed outside of $\widetilde{M}^{b}$, all critical points in
$\widetilde{M}^{b}$ are still mutually transverse.

Step 4: we modify $\widetilde{X}$ on $\widetilde{M} -
\widetilde{M}^{b}$. Slide the descending manifolds with the order
from critical points with lower values to those with higher values.
We eventually get that $\widetilde{X}$ satisfies transversality.

By the above argument, for all $p$ and $q$ in $M$, we have
$\mathcal{D}(p; \widetilde{X}) \subseteq M \cup
\widetilde{f}^{-1}((-\infty,a))$, $\mathcal{A}(q; \widetilde{X})
\subseteq M \cup \widetilde{f}^{-1}((b,+\infty))$, $\mathcal{D}(p;
\widetilde{X}) \cap M = \mathcal{D}(p; X)$ and $\mathcal{A}(q;
\widetilde{X}) \cap M = \mathcal{A}(q; X)$. Thus
\[
  \mathcal{D}(p; \widetilde{X}) \cap \mathcal{A}(q;
  \widetilde{X}) =(\mathcal{D}(p; \widetilde{X}) \cap M) \cap (\mathcal{A}(q;
  \widetilde{X}) \cap M) = \mathcal{D}(p; X) \cap \mathcal{A}(q;
  X).
\]

Suppose $M_{1} = \emptyset$. Clearly, we can construct
$\widetilde{f}$ such that $\widetilde{f}|_{\widetilde{M} - M} > b$.
Thus, for any $p \in M$, we have $\mathcal{D}(p; \widetilde{X})
\subseteq M$ and $\mathcal{D}(p; \widetilde{X}) = \mathcal{D}(p;
X)$. Similarly, the conclusion is true in the case of $M_{2} =
\emptyset$.
\end{proof}

%--------------------------------------------------------------------------------------------------------------------
%--------------------------------------------------------------------------------------------------------------------
\section{Moduli Spaces and Topological
Equivalence}\label{section_moduli_spaces}
In this section, we shall review the definitions of moduli spaces
and their compactifications. These definitions are standard in the
literature (see e.g. \cite{latour}, \cite{burghelea_haller},
\cite{schwarz} and \cite{qin}). There are several ways to define the
topology of these spaces. The definitions in this paper follow those in \cite[thms. 3.3, 3.4
and 3.5]{qin}.

The paper \cite{qin} focuses on the negative \textit{gradient}
vector fields. This paper deals with the negative
\textit{gradient-like} vector fields. By Lemma \ref{lemma_gradient},
there is no difference.

After this review, we shall prove Theorem
\ref{theorem_topological_equivalence}. This theorem shows that
topologically equivalent negative gradient-like fields have
homeomorphic compactified moduli spaces. In other words, the
compactified moduli spaces are invariants of topological
equivalence. In this paper, the application of topological
equivalence to Morse theory is based on this theorem.

Let $M$ be a finite dimensional manifold. Let $f$ be a proper Morse
function on $M$ and $X$ be a negative gradient-like vector field for
$f$. Assume $X$ satisfies transversality. Denote by $\phi_{t}(x)$
the flow generated by $X$ with initial value $x$ and time $t$. Define an
equivalence relation on $M$ by
\[
  x \thicksim y \quad \Leftrightarrow \quad \text{$y = \phi_{t}(x)$ for some $t \in (-\infty,
  +\infty)$}.
\]
Then $x \thicksim y$ if and only if $x$ and $y$ lie on the same flow
line. Suppose $p$ and $q$ are critical points of $f$. Define
$\mathcal{W}(p,q) = \mathcal{D}(p) \cap \mathcal{A}(q)$. Then
$\mathcal{W}(p,q)$ is a smoothly embedded submanifold of $M$. Define
$\mathcal{M}(p,q) = \mathcal{W}(p,q) / \thicksim$. We define the
smooth structure of $\mathcal{M}(p,q)$ as follows. Choose a regular
value $a \in (f(q), f(p))$. Then each flow line in
$\mathcal{W}(p,q)$ intersects $f^{-1}(a)$ exactly at one point. This
identifies $\mathcal{M}(p,q)$ with $\mathcal{W}(p,q) \cap f^{-1}(a)$
naturally. We transfer the smooth structure of $\mathcal{W}(p,q)
\cap f^{-1}(a)$ to $\mathcal{M}(p,q)$ by this identification.
Clearly, this definition does not depend on the choice of $a$.
Furthermore, the natural projection from $\mathcal{W}(p,q)$ to
$\mathcal{M}(p,q)$ is a smooth submersion.

It's well known that $\dim (\mathcal{W}(p,q)) = \mathrm{ind}(p) -
\mathrm{ind}(q)$ and $\dim (\mathcal{M}(p,q)) = \mathrm{ind}(p) -
\mathrm{ind}(q) - 1$ if $p \succ q$.

We shall generalize the concept of flow lines. Suppose $\gamma$ is a
flow line. If it passes through a singularity, it is a constant flow
line. Otherwise, it is nonconstant. The following definitions follow
\cite[sec.\ 2]{qin}

\begin{definition}\label{generalized_flow_line}
An ordered sequence of flow lines $\Gamma = (\gamma_{1},\cdots,
\gamma_{n})$, $n \geq 1$, is a generalized flow line if
$\gamma_{i}(+\infty) = \gamma_{i+1}(-\infty)$ and $\gamma_{i}$ are
constant or nonconstant alternatively according to the order of
their places in the sequence. We call $\gamma_{i}$ a component of
$\Gamma$.
\end{definition}

\begin{definition}\label{generalized_flow_line_points}
Suppose $x$ and $y$ are two points in $M$. A generalized flow line
$(\gamma_{1},\cdots, \gamma_{n})$ connects $x$ with $y$ if there
exist $t_{1}, t_{2} \in (-\infty, +\infty)$ such that
$\gamma_{1}(t_{1}) = x$ and $\gamma_{n}(t_{2}) = y$. A point $z$ is
a point on $(\gamma_{1},\cdots, \gamma_{n})$ if there exists
$\gamma_{i}$ and $t \in (-\infty, +\infty)$ such that $\gamma_{i}(t)
= z$.
\end{definition}

As mentioned before, $``\succeq"$ is a partial order
because of transversality.
\begin{definition}\label{critical_sequence}
An ordered set $I = \{ r_{0}, r_{1}, \cdots, r_{k+1} \}$ is a
critical sequence if $r_{i}$ ($i=0, \cdots, k+1$) are critical
points and $r_{0} \succ r_{1} \succ \cdots \succ r_{k+1}$. We call
$r_{0}$ the head of $I$, and $r_{k+1}$ the tail of $I$. The length
of $I$ is $|I|=k$. In particular, if $I = \{ r_{0} \}$, then $|I|=-1$.
\end{definition}

Suppose $I = \{ r_{0}, r_{1}, \cdots, r_{k+1} \}$ is a critical
sequence. If $k+1 >0$, define
\[
  \mathcal{M}_{I} = \prod_{i=0}^{k} \mathcal{M}(r_{i}, r_{i+1}).
\]
On the contrary, if $I = \{ r_{0} \}$, then define $\mathcal{M}_{I}$ as the one point set $\{ \beta(r_{0}) \}$, where $\beta(r_{0})$ is the constant flow line passing through $r_{0}$.

For $p \succ q$, define a space $\overline{\mathcal{M}(p,q)}$ as
\begin{equation}\label{equation_m(p,q)}
\overline{\mathcal{M}(p,q)} = \bigsqcup_{I} \mathcal{M}_{I},
\end{equation}
where the disjoint union is over all critical sequence with head $p$
and tail $q$.

We can give $\overline{\mathcal{M}(p,q)}$ another equivalent
definition which is sometimes more convenient. If $\alpha \in
\mathcal{M}_{I} \subseteq \overline{\mathcal{M}(p,q)}$, then $\alpha
= (\gamma_{0}, \cdots, \gamma_{k})$, where $\gamma_{i} \in
\mathcal{M}(r_{i}, r_{i+1})$, $r_{0}=p$ and $r_{k+1}=q$. Let $\beta(r_{i})$ denote the
constant flow line passing through $r_{i}$. We can
identify $\alpha$ with the generalized flow line $(\beta(r_{0}),
\gamma_{0}, \beta(r_{1}), \cdots, \gamma_{k},$ $\beta(r_{k+1}))$
connecting $p$ with $q$. Thus we get
\[
\overline{\mathcal{M}(p,q)}  = \{ \Gamma \mid \Gamma \text{ is a
generalized flow line connecting $p$ with $q$} \}.
\]

Suppose the critical values of $f$ divide $[f(q), f(p)]$ into $l+1$
intervals $[c_{i+1}, c_{i}]$ ($i=0, \cdots, l$), where $c_{0} =
f(p)$ and $c_{l+1} = f(q)$. Choose a regular value $a_{i} \in
(c_{i+1}, c_{i})$. The generalized flow line $\Gamma \in
\overline{\mathcal{M}(p,q)}$ intersects with $f^{-1}(a_{i})$ at
exactly one point $x_{i}(\Gamma)$. There is an evaluation map $E:
\overline{\mathcal{M}(p,q)} \rightarrow \prod_{i=0}^{l}
f^{-1}(a_{i})$ which is injective and is defined as
\begin{equation}\label{evalution_m(p,q)}
E(\Gamma) = (x_{0}(\Gamma), \cdots, x_{l}(\Gamma)).
\end{equation}

\begin{definition}\label{definition_m(p,q)}
For $p \succ q$, define the set $\overline{\mathcal{M}(p,q)}$ as
(\ref{equation_m(p,q)}). Equip $\overline{\mathcal{M}(p,q)}$ with
the unique topology such that the evaluation map $E:
\overline{\mathcal{M}(p,q)} \rightarrow \prod_{i=0}^{l}
f^{-1}(a_{i})$ in (\ref{evalution_m(p,q)}) is a topological
embedding. We call $\overline{\mathcal{M}(p,q)}$ the compactified
moduli space of $\mathcal{M}(p,q)$.
\end{definition}

It's easy to see that the definition of the topology of
$\overline{\mathcal{M}(p,q)}$ does not depend on the choice of
$a_{i}$.

For $p \succ q$, we compactify $\mathcal{W}(p,q)$ to be $\overline{\mathcal{W}(p,q)}$
as follows.

Suppose $I_{1} = (p, r_{1}, \cdots, r_{s})$ and $I_{2} = (r_{s+1},
\cdots, r_{k},$ $q)$ are critical sequences such that $r_{s} \succeq
r_{s+1}$. Let $(I, s) = (p, r_{1}, \cdots, q)$. Note that $(I,s)$ is not necessarily a critical sequence because $r_{s}$ may equal $r_{s+1}$. Define
\[
\mathcal{W}_{I, s} = \mathcal{M}_{I_{1}} \times \mathcal{W}(r_{s},r_{s+1}) \times \mathcal{M}_{I_{2}}
\]
Here, if $r_{s} = r_{s+1}$, then by definition $\mathcal{W}(r_{s},r_{s+1}) = \{ r_{s} \}$. Furthermore, if $I_{1} = \{ p \}$, then $(I,s) = (I,0)$ and
\[
\mathcal{W}_{I,s} = \{ \beta(p) \} \times \mathcal{W}(r_{s},r_{s+1}) \times
\mathcal{M}_{I_{2}},
\]
we naturally identify $\mathcal{W}_{I,s}$ with $\mathcal{W}(r_{s},r_{s+1}) \times \mathcal{M}_{I_{2}}$. Similarly, if $I_{2} = \{ q \}$, we identify $\mathcal{W}_{I,s}$ with $\mathcal{M}_{I_{1}} \times \mathcal{W}(r_{s},r_{s+1})$. Most particularly, if $I_{1} = \{ p \}$ and $I_{2} = \{ q \}$, we identify $\mathcal{W}_{I,s} = \mathcal{W}_{I,0}$ with $\mathcal{W}(r_{s},r_{s+1}) = \mathcal{W} (p,q)$.

Define a space $\overline{\mathcal{W}(p,q)}$ as
\begin{equation}\label{equation_w(p,q)}
\overline{\mathcal{W}(p,q)} = \bigsqcup_{(I,s)} \mathcal{W}_{I, s},
\end{equation}
where the disjoint union is over all $(I,s) = (p, r_{1}, \cdots,
r_{k}, q)$ such that $p \succ r_{1} \succ \cdots \succ r_{s} \succeq
r_{s+1} \succ \cdots \succ r_{k} \succ q$.

Suppose $(\alpha_{1}, x, \alpha_{2}) \in \mathcal{M}_{I_{1}} \times
\mathcal{W}(r_{s},r_{s+1}) \times \mathcal{M}_{I_{2}} =
\mathcal{W}_{I, s}$. Then $x$ is on the unique generalized flow line
$\Gamma \in \overline{\mathcal{M}(p,q)}$ whose components include those of $\alpha_{1}$ and $\alpha_{2}$ and, in addition, the flow line through $x$. Thus, identify
$(\alpha_{1}, x, \alpha_{2})$ with $(\Gamma, x)$, we get
\[
\overline{\mathcal{W}(p,q)}  =  \{ (\Gamma, x) \in
\overline{\mathcal{M}(p,q)} \times M \mid \text{$x$ is on $\Gamma$} \}.
\]

\begin{definition}\label{definition_w(p,q)}
For $p \succ q$, define the set $\overline{\mathcal{W}(p,q)}$ as
(\ref{equation_w(p,q)}). Define the topology of
$\overline{\mathcal{W}(p,q)}$ as the restriction of that of
$\overline{\mathcal{M}(p,q)} \times M$. We call
$\overline{\mathcal{W}(p,q)}$ the compactified space of
$\mathcal{W}(p,q)$.
\end{definition}

Clearly, the map $\widetilde{E}: \overline{\mathcal{W}(p,q)}
\rightarrow \prod_{i=0}^{l} f^{-1}(a_{i}) \times M$ is a topological
embedding, where
\begin{equation}\label{embed_w(p,q)}
  \widetilde{E}(\Gamma, x) = (E(\Gamma), x).
\end{equation}
Thus the topology of $\overline{\mathcal{W}(p,q)}$ in this paper is
equivalent to that of \cite[thm. 3.5]{qin}.

Finally, we define the compactified space
$\overline{\mathcal{D}(p)}$ of $\mathcal{D}(p)$. Suppose $f$ is
bounded below.

Suppose $I = \{ p, r_{1}, \cdots, r_{k} \}$ is a critical sequence.
Define
\[
\mathcal{D}_{I} = \mathcal{M}_{I} \times \mathcal{D}(r_{k}).
\]
In particular, if $I = \{ p \}$, we naturally identify $\mathcal{D}_{I}$ with $\mathcal{D} (p)$.

Define a space $\overline{\mathcal{D}(p)}$ as
\begin{equation}\label{equation_d(p)}
\overline{\mathcal{D}(p)} = \bigsqcup_{I} \mathcal{D}_{I},
\end{equation}
where the disjoint union is over all critical sequences with head
$p$.

Suppose $(\alpha, x) \in \mathcal{M}_{I} \times \mathcal{D}(r_{k}) =
\mathcal{D}_{I}$. We can identify $\alpha$ with a generalized flow
line connecting $p$ with $r_{k}$. Adding the flow line passing
through $x$ to the above generalized flow line, we get a generalized
flow line connecting $p$ with $x$. Thus we get
\[
\overline{\mathcal{D}(p)}  =  \{ (\Gamma, x) \mid \Gamma \text{ is a
generalized flow line connecting $p$ with $x$} \}.
\]

The definition of the topology of $\overline{\mathcal{D}(p)}$ is
slightly complicated.

Suppose the critical values in $(-\infty, f(p)]$ are exactly $c_{l}
< \cdots < c_{0} = f(p)$. Define $U(i) \subseteq
\overline{\mathcal{D}(p)}$ ($i = 0, \cdots, l$) as
\begin{equation}\label{equation_u(i)}
  U(i) = \{ (\Gamma, x) \mid c_{i+1} < f(x) < c_{i-1} \},
\end{equation}
where $c_{l+1} = -\infty$ and $c_{-1} = +\infty$. Clearly,
$\overline{\mathcal{D}(p)} = \bigcup_{i} U(i)$. We have the
following injection $E_{i}: U(i) \rightarrow \prod_{j=0}^{i-1}
f^{-1}(a_{j}) \times M$ such that
\[
  E_{i}(\Gamma, x) = (x_{0}(\Gamma), \cdots, x_{i-1}(\Gamma), x),
\]
where $x_{j}(\Gamma)$ is the unique intersection point between
$\Gamma$ and $f^{-1}(a_{j})$. Equip $U(i)$ with the unique topology such
that $E_{i}$ is a topological embedding. The paper \cite[thm.
3.4]{qin} shows that these $U(i)$ have compatible smooth structures
under the assumption of the local triviality of the vector field.
Follow that argument, we can prove that the topologies of these
$U(i)$ are compatible even if we drop the local triviality. Here compatibility
means that $U(i)$ and $U(j)$ share the same topology on $U(i) \cap
U(j)$.

\begin{definition}\label{definition_d(p)}
Define the set $\overline{\mathcal{D}(p)}$ as (\ref{equation_d(p)}).
Define the topology of $\overline{\mathcal{D}(p)} = \bigcup_{i}
U(i)$ as the coherent topology such that each $U(i)$ is an open
subspace of $\overline{\mathcal{D}(p)}$ (see (\ref{equation_u(i)})).
We call $\overline{\mathcal{D}(p)}$ the compactified space of
$\mathcal{D}(p)$.
\end{definition}

For the convenience of the reader, we include here an example in \cite{qin}.

\begin{example}\label{example_d(p)_compactify}
Figure \ref{figure_d(p)_compactify} shows a standard example on a torus $T^{2} =
S^{1} \times S^{1}$, where the arrows indicate the directions of flows. Consider $S^{1}$ as the unit circle on the
complex plane. Define a Morse function on $T^{2}$ by $f(z_{1},
z_{2}) = \textrm{Re}(z_{1}) + \textrm{Re}(z_{2})$. $f$ has $4$
critical points $p$, $r$, $s$ and $q$. Their indices are $2$, $1$,
$1$ and $0$ respectively. Equip $T^{2}$ with the standard metric.
The left part of Figure \ref{figure_d(p)_compactify} shows the flow on $T^{2}$,
where the opposite sides of the square are identified with each
other. The right part is $\overline{\mathcal{D}(p)}$
which is an octagon. Here $\mathcal{M}(p,r) \times
\mathcal{D}(r)$ (or $\mathcal{M}(p,s) \times \mathcal{D}(s)$)
consists of open edges containing $r_{i}$ (or $s_{i}$), where
$i=1,2$; $\mathcal{M}(p,q) \times \mathcal{D}(q)$ consists of the
other $4$ open edges; and $(\mathcal{M}(p,r) \times \mathcal{M}(r,q)
\times \mathcal{D}(q)) \cup (\mathcal{M}(p,s) \times
\mathcal{M}(s,q) \times \mathcal{D}(q))$ consists of the $8$
vertices.
\end{example}

\begin{figure}[!htbp]
\centering
\includegraphics[scale=0.3]{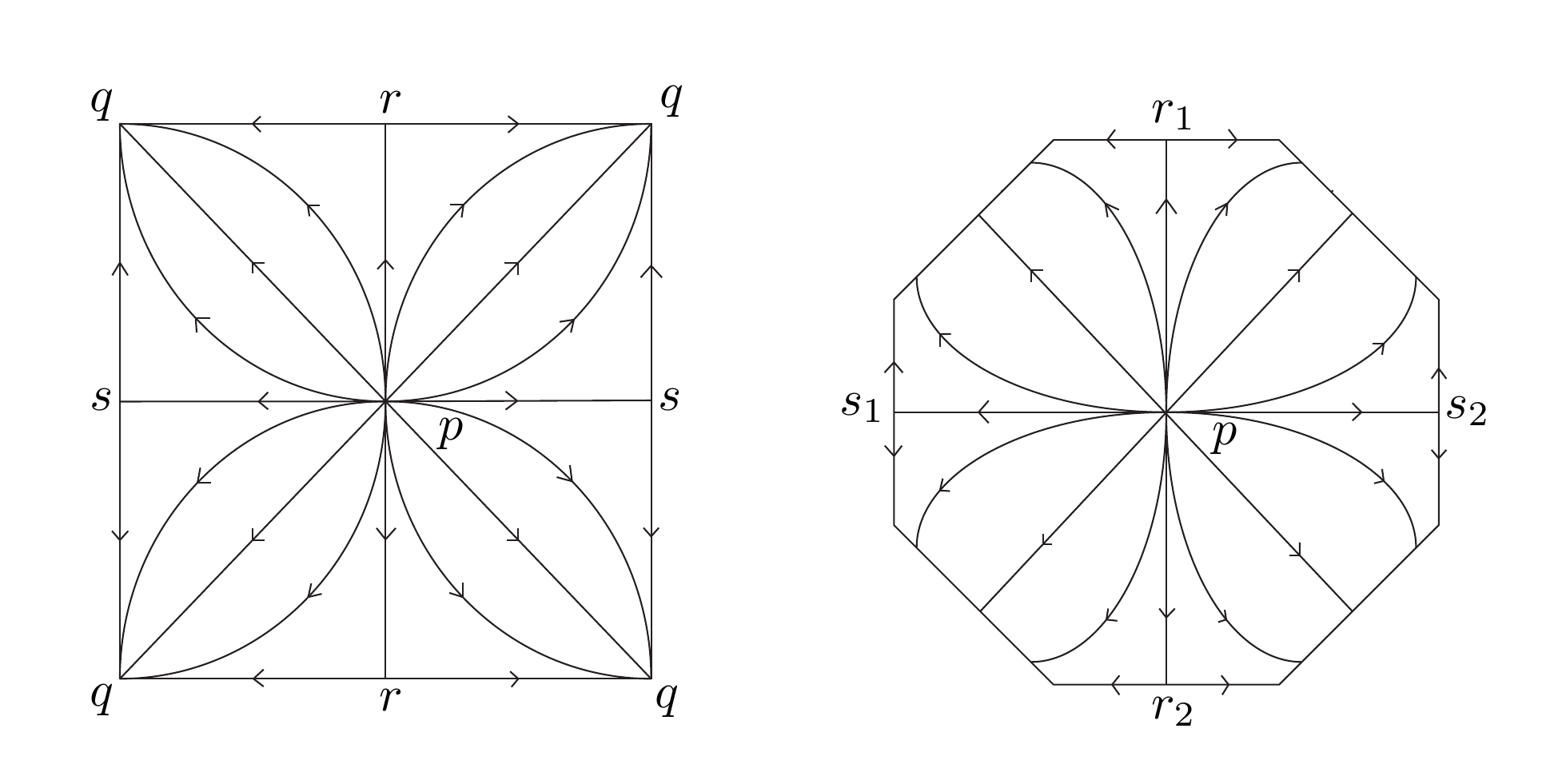} \caption{Compactification of the Descending Manifolds}
\label{figure_d(p)_compactify}
\end{figure}

Suppose $f_{1}$ and $f_{2}$ are Morse functions on $M_{1}$ and
$M_{2}$. Suppose $X_{i}$ is a negative gradient-like field for
$f_{i}$, and $X_{i}$ satisfies transversality. Suppose $h: M_{1}
\rightarrow M_{2}$ is a topological equivalence between $X_{1}$ and
$X_{2}$. If $p$ is a critical point of $f_{1}$, then $h(p)$ is a
critical point of $f_{2}$. Furthermore, $h(\mathcal{D}(p)) =
\mathcal{D}(h(p))$, $h(\mathcal{A}(p)) = \mathcal{A}(h(p))$, and
$h(\mathcal{W}(p,q)) = \mathcal{W}(h(p), h(q))$. Thus $h$ naturally
induces maps $h_{*}: \overline{\mathcal{M}(p,q)} \rightarrow
\overline{\mathcal{M}(h(p), h(q))}$, $h_{*}:
\overline{\mathcal{W}(p,q)} \rightarrow \overline{\mathcal{W}(h(p),
h(q))}$, and $h_{*}: \overline{\mathcal{D}(p)} \rightarrow
\overline{\mathcal{D}(h(p))}$. Here, if $\Gamma \in
\overline{\mathcal{M}(h(p), h(q))}$, then $h_{*}(\Gamma) =
h(\Gamma)$; if $(\Gamma, x) \in \overline{\mathcal{W}(p,q)}$ (or
$\overline{\mathcal{D}(p)}$), then $h_{*}(\Gamma, x) = (h(\Gamma),
h(x))$. Clearly, $h_{*}$ is a bijection and $(h_{*})^{-1} =
(h^{-1})_{*}$.

\begin{theorem}\label{theorem_topological_equivalence}
The maps $h_{*}: \overline{\mathcal{M}(p,q)} \rightarrow
\overline{\mathcal{M}(h(p), h(q))}$, $h_{*}:
\overline{\mathcal{D}(p)} \rightarrow \overline{\mathcal{D}(h(p))}$,
and $h_{*}: \overline{\mathcal{W}(p,q)} \rightarrow
\overline{\mathcal{W}(h(p), h(q))}$ are homeomorphisms.
\end{theorem}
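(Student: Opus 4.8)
\emph{Strategy.} The plan is to show that each of the three maps $h_{*}$ is continuous; since $h^{-1}$ is also a topological equivalence and $(h_{*})^{-1}=(h^{-1})_{*}$, applying the same statement to $h^{-1}$ then shows $h_{*}$ is open, hence a homeomorphism. I would treat $\overline{\mathcal{M}(p,q)}$ first and reduce the other two cases to it. Choose, as in Definition \ref{definition_m(p,q)}, regular values $a_{0}>\dots>a_{l}$ of $f_{1}$ and $b_{0}>\dots>b_{m}$ of $f_{2}$, giving the evaluation embeddings $E_{1}\colon\overline{\mathcal{M}(p,q)}\hookrightarrow\prod_{i}f_{1}^{-1}(a_{i})$ and $E_{2}\colon\overline{\mathcal{M}(h(p),h(q))}\hookrightarrow\prod_{j}f_{2}^{-1}(b_{j})$. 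Because $E_{2}$ is a topological embedding, it suffices to prove that each coordinate $\Gamma\mapsto y_{j}(h_{*}\Gamma)$ of $E_{2}\circ h_{*}$ is continuous on $\overline{\mathcal{M}(p,q)}$, where $y_{j}(h_{*}\Gamma)$ denotes the unique point of the generalized flow line $h(\Gamma)$ lying on $f_{2}^{-1}(b_{j})$.

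\emph{The core step.} Fix $\Gamma_{0}$ and $j$. As $b_{j}$ is a regular value of $f_{2}$, the point $y_{j}(h_{*}\Gamma_{0})$ is not a critical point, so it equals $h(z_{0})$ for a unique non-critical $z_{0}$ lying in the interior of some nonconstant component $\gamma$ of $\Gamma_{0}$; say $\gamma$ joins the critical points $r\succ r'$. Pick the regular value $a_{i}$ of $f_{1}$ in the gap just below $f_{1}(r)$, so $a_{i}\in(f_{1}(r'),f_{1}(r))$; then $\Gamma_{0}$ meets $f_{1}^{-1}(a_{i})$ at a point $x_{i}(\Gamma_{0})$ lying on $\gamma$ (the nonconstant components of $\Gamma_{0}$ sweep out disjoint open intervals of $f_{1}$-values), and the orbit arc of $\gamma$ from $x_{i}(\Gamma_{0})$ to $z_{0}$ is compact and disjoint from the critical set. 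Now $\Gamma\mapsto x_{i}(\Gamma)$ is continuous on $\overline{\mathcal{M}(p,q)}$, being a coordinate of $E_{1}$, hence so is $\Gamma\mapsto h(x_{i}(\Gamma))$; moreover $x_{i}(\Gamma)$ is non-critical, so it lies in the interior of some nonconstant component $\gamma_{\Gamma}$ of $\Gamma$, and $h(\gamma_{\Gamma})$, the $X_{2}$-orbit of $h(x_{i}(\Gamma))$, is a component of $h(\Gamma)$. Since $X_{2}$ is transverse to $f_{2}^{-1}(b_{j})$ (we have $X_{2}f_{2}<0$ there), the operation of flowing a point along $X_{2}$ in the direction that carried $h(x_{i}(\Gamma_{0}))$ to $y_{j}(h_{*}\Gamma_{0})$, until it first meets $f_{2}^{-1}(b_{j})$, is well defined and continuous on a neighborhood of $h(x_{i}(\Gamma_{0}))$ (namely, a neighborhood of the compact orbit arc joining those two points, which avoids the critical set). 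On that neighborhood the resulting point lies on $h(\Gamma)\cap f_{2}^{-1}(b_{j})$, which is the single point $y_{j}(h_{*}\Gamma)$ by the choice of $b_{j}$. Composing the three continuous maps shows $\Gamma\mapsto y_{j}(h_{*}\Gamma)$ is continuous near $\Gamma_{0}$; hence $E_{2}\circ h_{*}$, and therefore $h_{*}$, is continuous.

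\emph{The other two spaces.} For $\overline{\mathcal{W}(p,q)}$ the conclusion is immediate: by (\ref{embed_w(p,q)}) its topology is the subspace topology from $\overline{\mathcal{M}(p,q)}\times M_{1}$, the map $h_{*}\times h$ is a homeomorphism of $\overline{\mathcal{M}(p,q)}\times M_{1}$ onto $\overline{\mathcal{M}(h(p),h(q))}\times M_{2}$, and it carries $\overline{\mathcal{W}(p,q)}=\{(\Gamma,x)\mid x\ \text{on}\ \Gamma\}$ onto $\overline{\mathcal{W}(h(p),h(q))}$ because $h$ maps $\Gamma$ bijectively onto $h(\Gamma)$. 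For $\overline{\mathcal{D}(p)}$ one works through the open cover $\{U(i)\}$ of (\ref{equation_u(i)}): the projection $(\Gamma,x)\mapsto x$ is continuous on $\overline{\mathcal{D}(p)}$ (it is the last coordinate of each $E_{i}$ and the $U(i)$ are open), so each $h_{*}^{-1}(U_{2}(j))$ is the preimage of an open subset of $M_{1}$, hence open; and on each piece $U_{1}(i)\cap h_{*}^{-1}(U_{2}(j))$ the composite $E_{j}\circ h_{*}$ has $M_{2}$-coordinate $x\mapsto h(x)$ and remaining coordinates of the form $\Gamma\mapsto(\text{the point where }h(\Gamma)\text{ meets a regular level set of }f_{2})$, each continuous by the holonomy argument above. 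Since the $U_{2}(j)$ are open and these pieces cover $\overline{\mathcal{D}(p)}$, continuity of $h_{*}$ follows.

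\emph{Main obstacle.} The heart of the matter is the continuity claim of the second paragraph, which combines two inputs: first, that flowing a point to its first meeting with a transverse regular level set is a continuous operation on its natural open domain; second, that this ``holonomy'' for $X_{2}$ is genuinely intertwined with $h_{*}$ even though $h$ respects neither the time parametrization of orbits nor the filtration of $M$ by level sets of $f$. The device that makes the matching uniform over the broken trajectory $\Gamma_{0}$ is the choice of the cross-section $f_{1}^{-1}(a_{i})$ on the \emph{same} nonconstant component that carries $z_{0}$, together with the compactness of the connecting orbit arc, which forces the relevant orbit arcs of $h(\Gamma)$ to stay off the critical set for all $\Gamma$ near $\Gamma_{0}$. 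The rest --- reconciling the index sets $\{a_{i}\}$ and $\{b_{j}\}$, and the bookkeeping needed to express $E_{j}\circ h_{*}$ in the coordinates available on each $U(i)$, along the lines of \cite[thm.\ 3.4]{qin} --- is routine, and I would not belabor it.
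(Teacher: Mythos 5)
Your proof is correct and follows essentially the same route as the paper's: reduce via $(h_{*})^{-1}=(h^{-1})_{*}$ to continuity alone, then verify continuity coordinate-by-coordinate through the evaluation embeddings by exploiting that $h$ carries $X_{1}$-orbits to $X_{2}$-orbits together with the continuity of the flow holonomy between transverse regular level sets, and finally reduce $\overline{\mathcal{W}(p,q)}$ and $\overline{\mathcal{D}(p)}$ to the $\overline{\mathcal{M}(p,q)}$ case using the subspace and $U(i)$-cover descriptions of their topologies. The only difference is bookkeeping: the paper anchors at $x_{0}(\Gamma)$ and proceeds by induction over the remaining coordinates, while you anchor separately for each target $b_{j}$ at the cross-section $f_{1}^{-1}(a_{i})$ in the gap just below $f_{1}(r)$, which sidesteps the induction without changing the substance.
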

\begin{proof}
It suffices to prove that $h_{*}$ is continuous because this implies
$h_{*}^{-1}$ is also continuous.

(1). We consider the case of $h_{*}: \overline{\mathcal{M}(p,q)}
\rightarrow \overline{\mathcal{M}(h(p), h(q))}$.

By the definition, $\overline{\mathcal{M}(p,q)}$ is identified with
a topological subspace of $\prod_{i=0}^{l} f_{1}^{-1}(a_{i})$ and
$\overline{\mathcal{M}(h(p), h(q))}$ is identified with a
topological subspace of $\prod_{i=0}^{k} f_{2}^{-1}(b_{i})$. By this
identification, for any $\Gamma \in \overline{\mathcal{M}(p,q)}$, we
have $\Gamma = (x_{0}(\Gamma), \cdots, x_{l}(\Gamma))$ and
$h_{*}(\Gamma) = (y_{0}(h(\Gamma)), \cdots, y_{k}(h(\Gamma)))$.
Suppose $x_{0}(\Gamma_{0})$ is on $\gamma \in \mathcal{M}(p,r)$ and
$\gamma$ is a component of $\Gamma_{0}$, then $h(x_{0}(\Gamma_{0}))$
is on $h(\gamma) \in \mathcal{M}(h(p), h(r))$. Suppose the regular
values in $[f_{2}(h(r)), f_{2}(h(p))]$ are $b_{0}, \cdots, b_{s}$.
Then $h(\gamma)$ intersects with $f_{2}^{-1}(b_{i})$ ($0 \leq i \leq
s$) at $y_{i}(h(\Gamma_{0}))$. When $\Gamma \in \overline{\mathcal{M}(p,q)}$ converges to
$\Gamma_{0}$, we have $x_{0}(\Gamma)$ converges to
$x_{0}(\Gamma_{0})$, then $h(x_{0}(\Gamma))$ converges to
$h(x_{0}(\Gamma_{0}))$. Thus, by Lemma \ref{lemma_flow_intersection}, when $\Gamma$ is close to $\Gamma_{0}$
enough, the flow line passing through $h(x_{0}(\Gamma))$  intersects
with $f_{2}^{-1}(b_{i})$ ($0 \leq i \leq s$) at $y_{i}(h(\Gamma))$
and $y_{i}(h(\Gamma))$ is continuous with respect to $\Gamma$.

By an induction, we can prove that, for all $0 \leq i \leq k$,
$y_{i}(h(\Gamma))$ is continuous with respect to $\Gamma$. Thus
$h_{*}$ is continuous.

(2). Since $\overline{\mathcal{W}(p,q)}$ is a topological subspace
of $\overline{\mathcal{M}(p,q)} \times M_{1}$, by (1), we infer that
$h_{*}$ is continuous on $\overline{\mathcal{W}(p,q)}$.

(3). We consider the case of $h_{*}: \overline{\mathcal{D}(p)}
\rightarrow \overline{\mathcal{D}(h(p))}$.

It suffices to check the continuity of $h_{*}$ on each $U(i)$.
Suppose $(\Gamma_{0}, z_{0}) \in U(i)$ and $\widetilde{c}_{s+1} <
f_{2}(h(z_{0})) < \widetilde{c}_{s-1}$, where $\widetilde{c}_{j}$
are critical values of $f_{2}$. Then $h_{*}(\Gamma_{0}, z_{0}) \in
\widetilde{U}(s)$, where $\widetilde{U}(s) \subseteq
\overline{\mathcal{D}(h(p))}$ is defined similarly to $U(i)$. Thus,
when $(\Gamma, z)$ is close to $(\Gamma_{0}, z_{0})$ enough, we have
$h_{*}(\Gamma, z) \in \widetilde{U}(s)$. Identify $\widetilde{U}(s)$
with a topological subspace of $\prod_{j=0}^{s-1} f_{2}^{-1}(b_{j})
\times M_{2}$, we have $h_{*}(\Gamma, z) = (y_{0}(h(\Gamma)),
\cdots, y_{s-1}(h(\Gamma)), h(z))$. By an argument similar to that
in (1), we can prove that $y_{j}(h(\Gamma))$ is continuous with
respect to $\Gamma$. Since $h(z)$ is continuous with respect to $z$,
we infer $h_{*}$ is continuous.
\end{proof}

%--------------------------------------------------------------------------------------------------------------------
%--------------------------------------------------------------------------------------------------------------------
\section{Properties of Moduli
Spaces}\label{section_property_moduli_spaces}
In this section, we establish the relevant properties of the
compactified moduli spaces. Particularly, the manifold structures of
these spaces will be emphasized.

When the metric is locally trivial, similar results can be found in
the literature (see e.g. \cite{latour}, \cite{burghelea_haller} and
\cite{qin}). Our results are extensions of those results to the case
of a general metric provided that the Morse function $f$ is proper.
In this case, every negative gradient-like vector field $X$ for $f$
satisfies the \textsl{CF} condition in \cite[def.\ 2.6]{qin}. This
extension needs Theorem \ref{theorem_regular_path}, Lemma
\ref{lemma_reduction}, and Theorem
\ref{theorem_topological_equivalence}.

We introduce the concepts of manifolds with corners or faces. Our
terminology follows that in \cite[p.\ 2]{douady}, \cite[sec.\
1.1]{janich} and \cite{qin}.

\begin{definition}\label{manifold_with_corner}
A smooth manifold with corners is a space defined in the same way as
a smooth manifold except that its atlases are open subsets of $[0, +
\infty)^{n}$.
\end{definition}

If $L$ is a smooth manifold with corners, $x \in L$, a neighborhood
of $x$ is diffeomorphic to $(0, \epsilon)^{n-k} \times [0,
\epsilon)^{k}$, then define $c(x) = k$. Clearly, $c(x)$ does not
depend on the choice of atlas.

\begin{definition}\label{definition_k_stratum}
Suppose $L$ is a smooth manifold. We call $\{ x \in L \mid c(x) = k
\}$ the $k$-stratum of $L$. Denote it by $\partial^{k} L$.
\end{definition}

Clearly, $\partial^{k} L$ is a submanifold \textit{without} corners
inside $L$, its codimension is $k$.

\begin{definition}\label{definition_manifold_with_face}
A smooth manifold $L$ with faces is a smooth manifold with corners
such that each $x$ belongs to the closures of $c(x)$ different
components of $\partial^{1} L$.
\end{definition}

Consider firstly the special case when $M$ is compact. By Theorem
\ref{theorem_regular_path}, we can construct a negative
gradient-like field $Y$ for $f$ such that $Y$ is locally trivial and
satisfies transversality. In addition, there exists a topological
equivalence between $X$ and $Y$ such that $h(p)=p$ for each critical
point $p$. Thus, by Theorem \ref{theorem_topological_equivalence},
$X$ and $Y$ have isomorphic compactified moduli spaces. Since the
properties of these spaces for $Y$ are proved in \cite{qin}, we
deduce certain properties of these spaces for $X$.

More generally, suppose that $f$ is proper but $M$ is not
necessarily compact. For any pair of critical points $(p,q)$, choose
regular values $a$ and $b$ such that $M^{a,b}$ is compact and
contains $p$ and $q$. By Lemma \ref{lemma_reduction}, we can embed
$M^{a,b}$ into $\widetilde{M}$, extend $f|_{M^{a,b}}$ to be
$\widetilde{f}$ on $\widetilde{M}$, and extend $X|_{M^{a,b}}$ to be
$\widetilde{X}$ on $\widetilde{M}$. Furthermore, $\mathcal{W}(p,q;X)
= \mathcal{W}(p,q;\widetilde{X})$. Thus we get
$\overline{\mathcal{M}(p,q;X)} =
\overline{\mathcal{M}(p,q;\widetilde{X})}$ and
$\overline{\mathcal{W}(p,q;X)} =
\overline{\mathcal{W}(p,q;\widetilde{X})}$. If $f$ is bounded below,
we choose $M^{a}$ such that $p \in M^{a}$. Do the above extension
again to get $\overline{\mathcal{D}(p;X)} =
\overline{\mathcal{D}(p;\widetilde{X})}$. Thus Lemma
\ref{lemma_reduction} reduces the proper case to the compact case.

Before formulating the property of $\overline{\mathcal{M}(p,q)}$, we
introduce a map. Suppose $\Gamma_{1} \in
\overline{\mathcal{M}(p,r)}$ is a generalized flow line connecting
$p$ with $r$ and $\Gamma_{2} \in \overline{\mathcal{M}(r,q)}$ is a
generalized flow line connecting $r$ with $q$. Thus the combination
of $\Gamma_{1}$ and $\Gamma_{2}$ gives a generalized flow line
$\Gamma$ connecting $p$ with $q$. So we have the natural inclusion
$i_{(p,r,q)}: \overline{\mathcal{M}(p,r)} \times
\overline{\mathcal{M}(r,q)} \rightarrow
\overline{\mathcal{M}(p,q)}$.

\begin{theorem}\label{theorem_m(p,q)}
Suppose $f$ is proper and $X$ satisfies transversality. Then, for
each critical sequence $\{ p, q \}$, the topological space
$\overline{\mathcal{M}(p,q)} = \bigsqcup \mathcal{M}_{I}$ is defined
by Definition \ref{definition_m(p,q)}. It has the following
properties.

(1). It is a compact topological manifold with boundary. Its
interior is $\mathcal{M}(p,q)$.

(2). Its topology is compatible with that of each $\mathcal{M}_{I}$, and
the map $i_{(p,r,q)}: \overline{\mathcal{M}(p,r)} \times
\overline{\mathcal{M}(r,q)} \rightarrow \overline{\mathcal{M}(p,q)}$
is a topological embedding.

(3). The evaluation map $E: \overline{\mathcal{M}(p,q)} \rightarrow
\prod_{i=0}^{l} f^{-1}(a_{i})$ is a topological embedding, where $E$
is defined in (\ref{evalution_m(p,q)}).

(4). There exists a topological embedding $\iota:
\overline{\mathcal{M}(p,q)} \rightarrow \prod_{i=0}^{l}
f^{-1}(a_{i})$ such that $\iota(\overline{\mathcal{M}(p,q)})$ is a
smoothly embedded submanifold with faces inside $\prod_{i=0}^{l}
f^{-1}(a_{i})$ and the $k$-stratum of
$\iota(\overline{\mathcal{M}(p,q)})$ is $\bigsqcup_{|I|=k}
\iota(\mathcal{M}_{I})$.

In particular, if $M$ is compact, then there exist homeomorphisms
$h_{i}: f^{-1}(a_{i}) \rightarrow f^{-1}(a_{i})$ such that $\iota =
(\prod_{i=0}^{l} h_{i}) \circ E$ in (4).
\end{theorem}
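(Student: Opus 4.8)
The plan is to reduce everything to the locally trivial case, where all four properties are already established in \cite{qin}, and then transport them via topological equivalence. First, by the reduction argument preceding the theorem statement (Lemma \ref{lemma_reduction} together with the observation that $\overline{\mathcal{M}(p,q;X)} = \overline{\mathcal{M}(p,q;\widetilde{X})}$), I may assume without loss of generality that $M$ is compact; the properties (1)--(4) only concern the spaces $\overline{\mathcal{M}(p,q)}$, which are unchanged under this extension. So assume $M$ compact. By Theorem \ref{theorem_regular_path}, there is a negative gradient-like field $Y$ for $f$ which is locally trivial, satisfies transversality, and admits a topological conjugacy (in particular a topological equivalence) $h$ with $X$ fixing every critical point. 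By Theorem \ref{theorem_topological_equivalence}, $h$ induces a homeomorphism $h_{*}: \overline{\mathcal{M}(p,q;X)} \to \overline{\mathcal{M}(h(p), h(q); Y)} = \overline{\mathcal{M}(p,q;Y)}$, and moreover $h_{*}$ restricts to a homeomorphism $\mathcal{M}_{I}(X) \to \mathcal{M}_{I}(Y)$ on each stratum (since $h$ maps orbits to orbits and fixes critical points, it preserves the decomposition by critical sequences).

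Now I transport each property. For (1): $\overline{\mathcal{M}(p,q;Y)}$ is a compact topological manifold with boundary whose interior is $\mathcal{M}(p,q;Y)$ by \cite{qin}; since $h_{*}$ is a homeomorphism carrying $\mathcal{M}(p,q;X)$ onto $\mathcal{M}(p,q;Y)$, the same holds for $X$. For (2): the compatibility of the topology of $\overline{\mathcal{M}(p,q;X)}$ with those of the $\mathcal{M}_{I}(X)$ follows because $h_{*}$ is a stratum-preserving homeomorphism and the corresponding statement holds for $Y$; the embedding statement for $i_{(p,r,q)}$ follows from naturality of $h_{*}$ with respect to concatenation of generalized flow lines, which reduces it to the $Y$ case. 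For (3): the evaluation map $E$ is a topological embedding by construction (Definition \ref{definition_m(p,q)}); this is intrinsic and needs no transport. For (4): here one must produce the smooth submanifold-with-faces structure. The field $Y$ being locally trivial, \cite[thm.\ 3.3]{qin} gives a smooth embedding $\iota_{Y}: \overline{\mathcal{M}(p,q;Y)} \hookrightarrow \prod_{i} f^{-1}(a_{i})$ realizing it as a submanifold with faces with the correct stratification. Define $\iota = \iota_{Y} \circ h_{*}$. This is a topological embedding, and its image is exactly $\iota_{Y}(\overline{\mathcal{M}(p,q;Y)})$, a smoothly embedded submanifold with faces whose $k$-stratum is $\bigsqcup_{|I|=k} \iota_{Y}(\mathcal{M}_{I}(Y)) = \bigsqcup_{|I|=k} \iota(\mathcal{M}_{I}(X))$ since $h_{*}$ is stratum-preserving.

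It remains to prove the final sentence: when $M$ is compact, there exist homeomorphisms $h_{i}: f^{-1}(a_{i}) \to f^{-1}(a_{i})$ with $\iota = (\prod_{i} h_{i}) \circ E$. The idea is that both $E$ and $\iota$ embed $\overline{\mathcal{M}(p,q)}$ into the same product $\prod_{i} f^{-1}(a_{i})$, and on the open dense stratum $\mathcal{M}(p,q)$ they are related by the flow of $X$ versus the flow of $Y$ evaluated at the level sets; one builds $h_{i}$ level by level using the topological equivalence $h$, which by definition sends $X$-orbits to $Y$-orbits preserving direction, hence sends the intersection of an $X$-orbit with $f^{-1}(a_{i})$ to the intersection of the corresponding $Y$-orbit with $f^{-1}(a_{i})$. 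Concretely, define $h_{i}$ on $f^{-1}(a_{i})$ by following the $X$-flow line through a point $x$ forward or backward until it meets $\mathcal{D}(p)$-or-beyond, applying $h$, and recording where the resulting $Y$-flow line crosses $f^{-1}(a_{i})$; since every point of $f^{-1}(a_{i})$ lies on a unique flow line and $h$ is an orbit-preserving homeomorphism fixing critical points, this is a well-defined homeomorphism of $f^{-1}(a_{i})$, and by construction $h_{i}(x_{i}(\Gamma)) = y_{i}(h(\Gamma)) = $ the $i$-th coordinate of $\iota(\Gamma)$. I expect the main obstacle to be this last point: verifying that the level-wise maps $h_{i}$ are genuinely well-defined homeomorphisms of the whole level set $f^{-1}(a_{i})$ (not just on the part swept out by flow lines from $p$ to $q$), which requires extending $h$-compatibility off $\overline{\mathcal{W}(p,q)}$ — here one uses that $h$ is a global topological equivalence on $M$ (Theorem \ref{theorem_regular_path}) and that each level set of a proper Morse function without critical points at that level is transverse to the flow, so "flow to the level set" is a homeomorphism between any two regular level sets in the same band.
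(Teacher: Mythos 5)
Your proposal follows the paper's own proof almost exactly: reduce to the compact case via Lemma \ref{lemma_reduction}, produce the locally trivial field $Y$ and the equivalence $h$ via Theorem \ref{theorem_regular_path}, transport (1) and (2) through the stratum-preserving homeomorphism $h_{*}$ of Theorem \ref{theorem_topological_equivalence}, observe that (3) is definitional, set $\iota = E_{Y}\circ h_{*}$ for (4), and obtain the $h_{i}$ by composing $h$ with the map that pushes $h(f^{-1}(a_{i}))$ back to $f^{-1}(a_{i})$ along the $Y$-flow (the paper's $\psi_{i}$, using that the forward and backward critical limits of each orbit through $f^{-1}(a_{i})$ are fixed by $h$ and lie on opposite sides of the level $a_{i}$).

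The one point you gloss is the ``without loss of generality $M$ is compact'' step as it affects (4). After replacing $M^{a,b}$ by the double $\widetilde{M}$, the smooth embedding furnished by \cite[thm.\ 3.3]{qin} for $Y$ a priori lands in $\prod_{i}\widetilde{f}^{-1}(a_{i})$, not in $\prod_{i} f^{-1}(a_{i})$, so the statement you prove is not literally the stated one for the original $(M,f)$; your claim that properties (1)--(4) are ``unchanged under this extension'' is not automatic for (4). The paper closes this by checking that $\mathcal{W}(r_{1},r_{2};Y)\subseteq M^{a,b}$ for all critical points $r_{1},r_{2}\in M^{a,b}$ (a $Y$-flow line leaving $M^{a,b}$ must take $\widetilde{f}$-values outside $[a,b]$, since $Y$ is negative gradient-like for $\widetilde{f}$), whence $\mathrm{Im}(E_{Y})\subseteq \prod_{i} f^{-1}(a_{i})$; combined with the fact that $f^{-1}(a_{i})$ is open and closed in $\widetilde{f}^{-1}(a_{i})$, this gives (4) exactly as stated. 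This is a small, easily repaired omission rather than a wrong turn; the rest of your argument, including the verification that each $h_{i}$ is a genuine homeomorphism of $f^{-1}(a_{i})$, coincides with the paper's proof.
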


\begin{theorem}\label{theorem_compact_m(p,q)}
Under the assumption of Theorem \ref{theorem_m(p,q)}, each
$\overline{\mathcal{M}(p,q)}$ carries a smooth structure compatible
with its topology such that $\overline{\mathcal{M}(p,q)}$ is a
compact smooth manifold with faces and $\partial^{k}
\overline{\mathcal{M}(p,q)} = \bigsqcup_{|I|=k} \mathcal{M}_{I}$. In
particular, suppose $M$ is compact, then $i_{(p,r,q)}:
\overline{\mathcal{M}(p,r)} \times \overline{\mathcal{M}(r,q)}
\rightarrow \overline{\mathcal{M}(p,q)}$ is a smooth embedding.
\end{theorem}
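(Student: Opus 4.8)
The plan is to deduce Theorem~\ref{theorem_compact_m(p,q)} from Theorem~\ref{theorem_m(p,q)}, the results of \cite{qin} in the locally trivial case, and Theorem~\ref{theorem_topological_equivalence}, by transporting the smooth structure through a topological equivalence rather than constructing an atlas by hand.

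First I would reduce to the case that $M$ is compact. Given $(p,q)$, choose regular values $a<b$ with $p,q \in M^{a,b}$ and $M^{a,b}$ compact; by Lemma~\ref{lemma_reduction} embed $M^{a,b}$ in a closed manifold $\widetilde M$ and extend $f,X$ to $\widetilde f, \widetilde X$ with $\widetilde X$ transverse and $\mathcal W(p,q;X)=\mathcal W(p,q;\widetilde X)$. As noted before Theorem~\ref{theorem_m(p,q)}, this gives $\overline{\mathcal M(p,q;X)}=\overline{\mathcal M(p,q;\widetilde X)}$ as topological spaces with the same decomposition $\bigsqcup_I \mathcal M_I$, and for a triple $(p,r,q)$ one runs the reduction on a single $M^{a,b}$ containing all three; since a generalized flow line joining $p$ to $q$ stays in $M^{a,b}$, both the stratification and the map $i_{(p,r,q)}$ are unaffected. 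So it suffices to treat compact $M$.

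Now assume $M$ compact. Apply Theorem~\ref{theorem_regular_path} to get a locally trivial negative gradient-like field $Y$ for $f$ satisfying transversality, together with a topological equivalence $h\colon M\to M$ between $X$ and $Y$ with $h(p)=p$ for every critical point $p$. By the results of \cite{qin} established for locally trivial vector fields, $\overline{\mathcal M(p,q;Y)}$ carries a smooth structure compatible with its topology making it a compact smooth manifold with faces, with $\partial^k\overline{\mathcal M(p,q;Y)}=\bigsqcup_{|I|=k}\mathcal M_I(Y)$, and with $i_{(p,r,q)}^Y$ a smooth embedding. By Theorem~\ref{theorem_topological_equivalence}, $h_*\colon\overline{\mathcal M(p,q;X)}\to\overline{\mathcal M(p,q;Y)}$ is a homeomorphism; I would pull back the smooth structure along $h_*$ and take this to be the structure on $\overline{\mathcal M(p,q;X)}$. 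Compatibility with the topology is then automatic, and $\overline{\mathcal M(p,q;X)}$ is diffeomorphic to $\overline{\mathcal M(p,q;Y)}$, hence a compact manifold with faces. Since $h$ fixes each critical point and carries flow lines to flow lines preserving direction, it sends a generalized flow line with critical sequence $I$ to one with the same critical sequence $I$, so $h_*(\mathcal M_I(X))=\mathcal M_I(Y)$; thus $h_*$ matches strata with strata and $\partial^k\overline{\mathcal M(p,q;X)}=\bigsqcup_{|I|=k}\mathcal M_I(X)$. For the last assertion, the same observation gives the identity $h_*\circ i_{(p,r,q)}^X = i_{(p,r,q)}^Y\circ(h_*\times h_*)$, i.e.\ $i_{(p,r,q)}^X = h_*^{-1}\circ i_{(p,r,q)}^Y\circ(h_*\times h_*)$; with the pulled-back structures $h_*$ and $h_*\times h_*$ are diffeomorphisms and $i_{(p,r,q)}^Y$ is a smooth embedding, so $i_{(p,r,q)}^X$ is a smooth embedding as well (consistent with its being a topological embedding by Theorem~\ref{theorem_m(p,q)}(2)).

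The step I expect to be the main obstacle is verifying that this transported structure is the \emph{right} one, namely that $h_*$ respects the decomposition $\bigsqcup_I\mathcal M_I$ cell by cell so that the realized stratification is the one asserted; this rests squarely on $h$ fixing the critical points, which is supplied by Theorem~\ref{theorem_regular_path} and ultimately by the Strengthened Morse Lemma (Theorem~\ref{theorem_Morse}). A secondary point worth emphasizing — flagged already in the Introduction — is that, unlike the locally trivial case, this smooth structure is in general \emph{not} compatible with a smooth embedding into products of the level sets $f^{-1}(a_i)$ themselves; compatibility is recovered only after composing the evaluation map $E$ with suitable homeomorphisms $h_i$ of the level sets, exactly as in Theorem~\ref{theorem_m(p,q)}(4), so one should not attempt to build the atlas directly from $E$.
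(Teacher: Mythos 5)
Your proof is correct and follows essentially the same route the paper takes: reduce to the compact case via Lemma~\ref{lemma_reduction}, invoke Theorem~\ref{theorem_regular_path} to produce a locally trivial $Y$ and a topological equivalence $h$ fixing critical points, and transport the smooth structure (the paper does this implicitly through the embedding $\iota = E_Y\circ h_*$ of Theorem~\ref{theorem_m(p,q)}(4), you do it directly through $h_*$, but since $E_Y$ is a smooth embedding these yield the same structure), then deduce the smoothness of $i_{(p,r,q)}$ from the conjugation identity and the locally trivial case of \cite{qin}. The paper compresses all of this into one short paragraph, so your version is just a more explicit unpacking of the same argument.
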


\begin{remark}
The (1) of Theorem \ref{theorem_m(p,q)} shows that we can add a
boundary to $\mathcal{M}(p,q)$ such that it becomes a compact
manifold with boundary. The following theorems show that this is
also true for $\mathcal{W}(p,q)$ and $\mathcal{D}(p)$. Thus moduli
spaces are \textit{special} open manifolds (if they are open)
because there exist obstructions of adding a boundary to a
general open manifold (see \cite{siebenmann}).
\end{remark}

\begin{remark}\label{remark_embed_m(p,q)}
The paper \cite[example 3.1]{qin} shows that, if the metric is not
locally trivial, then $E(\overline{\mathcal{M}(p,q)})$ usually is
even not a $C^{1}$ embedded submanifold of $\prod_{i=0}^{l}
f^{-1}(a_{i})$. Here $E$ is the evaluation map in the (3) of Theorem
\ref{theorem_m(p,q)}. However, the (4) of Theorem
\ref{theorem_m(p,q)} shows that a suitable embedding $\iota$ makes
the image good.
\end{remark}

\begin{proof}[Proof of Theorem \ref{theorem_m(p,q)}]
Choose regular values $a$ and $b$ such that $M^{a,b}$ is compact and
contains $p$ and $q$. As described in the above, construct
$\widetilde{M}$, $\widetilde{f}$ and $\widetilde{X}$. We have
$\overline{\mathcal{M}(p,q;X)} =
\overline{\mathcal{M}(p,q;\widetilde{X})}$ and
$\mathcal{M}_{I}(\widetilde{X}) = \mathcal{M}_{I}(X)$ for all
critical sequences $I$ with head $p$ and tail $q$. There exists a negative gradient-like vector field $Y$ for $\widetilde{f}$ on $\widetilde{M}$ and a topological equivalence $h: \widetilde{M} \rightarrow \widetilde{M}$
which maps the orbits of $\widetilde{X}$ to those of $Y$, where $Y$
is locally trivial.

(1). By \cite[thm.\ 3.3]{qin}, we know that
$\overline{\mathcal{M}(p,q;Y)}$ is a compact smooth manifold with
faces whose $k$-stratum is $\bigsqcup_{|I|=k} \mathcal{M}_{I}(Y)$.
Thus $\overline{\mathcal{M}(p,q;Y)}$ is a compact topological
manifold with boundary, and its interior is $\mathcal{M}(p,q;Y)$. By
Theorem \ref{theorem_topological_equivalence}, we know that $h$
induces a homeomorphism $h_{*}:
\overline{\mathcal{M}(p,q;\widetilde{X})} \rightarrow
\overline{\mathcal{M}(p,q;Y)}$ such that
$h_{*}(\mathcal{M}_{I}(\widetilde{X})) = \mathcal{M}_{I}(Y)$. This
completes the proof of (1).

(2). The proof is easy and even does not need the comparison among
$\overline{\mathcal{M}(p,q;X)}$,
$\overline{\mathcal{M}(p,q;\widetilde{X})}$ and
$\overline{\mathcal{M}(p,q;Y)}$. Similar details is also included in
the proof of \cite[thm.\ 3.3]{qin}.

(3). This is the definition of the topology of
$\overline{\mathcal{M}(p,q;X)}$.

(4). Let $E_{Y}: \overline{\mathcal{M}(p,q;Y)} \rightarrow
\prod_{i=0}^{l} \widetilde{f}^{-1}(a_{i})$ be the evaluation map. By
\cite[thm. 3.3]{qin}, we know $E_{Y}$ is a smooth embedding.

Suppose $r_{1}$ and $r_{2}$ are in $M^{a,b}$. It's easy to see that $\mathcal{D}(r_{1}; Y) \subseteq M^{a,b} \cup \widetilde{f}^{-1}((- \infty, a))$ and $\mathcal{A}(r_{2}; Y) \subseteq M^{a,b} \cup \widetilde{f}^{-1}((b, + \infty))$. We infer that $\mathcal{W}(r_{1}, r_{2}; Y) \subseteq M^{a,b}$. Thus $\text{Im}(E_{Y}) \subseteq \prod_{i=0}^{l} f^{-1}(a_{i})$ and
$\iota = E_{Y} \circ h_{*}$ is the desired map.

Finally, we consider the special case when $M$ is compact. We construct $Y$ on $M$. The topological equivalence $h: M
\rightarrow M$ induces the homeomorphism $h_{*}:
\overline{\mathcal{M}(p,q;X)} \rightarrow
\overline{\mathcal{M}(p,q;Y)}$. We consider the relation between
$h(f^{-1}(a_{i}))$ and $f^{-1}(a_{i})$. Denote by $\phi_{t}^{X}$ the
flow generated by $X$ and by $\phi_{t}^{Y}$ the flow generated by
$Y$. For any $x \in f^{-1}(a_{i})$, we have $\phi^{X}(-\infty, x) =
r_{1}$ for some $r_{1} \in M - M^{a_{i}}$ and $\phi^{X}(+\infty, x)
= r_{2}$ for some $r_{2} \in M^{a_{i}}$. Since $h$ is a topological
equivalence fixing $r_{1}$ and $r_{2}$, we know that
$\phi^{Y}(-\infty, h(x)) = r_{1}$ and $\phi^{Y}(+\infty, h(x)) =
r_{2}$. Thus, $\phi^{Y}(t(x), h(x)) \in f^{-1}(a_{i})$ for some $t(x) \in
(-\infty, +\infty)$. By Lemma \ref{lemma_flow_intersection}, an isotopy along the flows generated by $Y$
gives a homeomorphism $\psi_{i}: h(f^{-1}(a_{i})) \rightarrow
f^{-1}(a_{i})$. We complete the proof by defining $h_{i} = \psi_{i}
\circ h$.
\end{proof}

\begin{proof}[Proof of Theorem \ref{theorem_compact_m(p,q)}]
The first half part of Theorem \ref{theorem_compact_m(p,q)} is a
corollary of Theorem \ref{theorem_m(p,q)}. It remains to prove that, when $M$ is compact, there exists a suitable smooth structure of $\overline{\mathcal{M}(r_{1}, r_{2}; X)}$ for each pair of critical points $r_{1}$ and $r_{2}$ such that $i_{(p,r,q)}$ is a smooth embedding.

Construct a locally trivial field $Y$ as we did in the proof of Theorem \ref{theorem_m(p,q)}. We get a topological equivalence $h: M \rightarrow M$ which induces homeomorphisms $h_{*}^{r_{1}, r_{2}}: \overline{\mathcal{M}(r_{1}, r_{2}; X)} \rightarrow \overline{\mathcal{M}(r_{1}, r_{2}; Y)}$. Here we use the notation $h_{*}^{r_{1}, r_{2}}$ instead of $h_{*}$ to indicate the critical points. By \cite[thm.\ 3.3]{qin}, each $\overline{\mathcal{M}(r_{1}, r_{2}; Y)}$ has a natural smooth structure. Define the smooth structure of $\overline{\mathcal{M}(r_{1}, r_{2}; X)}$ such that $h_{*}^{r_{1}, r_{2}}$ is a diffeomorphism. We have the following commutative diagram:
\[
\xymatrix{
  \overline{\mathcal{M}(p,r;X)} \times \overline{\mathcal{M}(r,q;X)} \ar[d]_{h_{*}^{p,r} \times h_{*}^{r,q}} \ar[r]^-{i_{(p,r,q)}} & \overline{\mathcal{M}(p,q;X)} \ar[d]^{h_{*}^{p,q}} \\
  \overline{\mathcal{M}(p,r;Y)} \times \overline{\mathcal{M}(r,q;Y)} \ar[r]^-{i'_{(p,r,q)}} & \overline{\mathcal{M}(p,q;Y)} \, , }
\]
where $i'_{(p,r,q)}$ is the natural inclusion similar to $i_{(p,r,q)}$. By \cite[thm.\ 3.3]{qin}, we know that $i'_{(p,r,q)}$ is a smooth embedding. Since the vertical maps of this diagram are diffeomorphisms, we infer that $i_{(p,r,q)}$ is a smooth embedding.
\end{proof}

Since we define $\overline{\mathcal{W}(p,q)}$ as a subspace
of $\overline{\mathcal{M}(p,q)} \times M$, we have the inclusion $i:
\overline{\mathcal{W}(p,q)} \rightarrow \overline{\mathcal{M}(p,q)}
\times M$. Suppose $p \succ r \succ q$. If $(\Gamma_{1}, x) \in \overline{\mathcal{W}(p,r)}$ and
$\Gamma_{2} \in \overline{\mathcal{M}(r,q)}$, then the combination
of $\Gamma_{1}$ and $\Gamma_{2}$ gives an element in
$\overline{\mathcal{M}(p,q)}$ and $x$ is on it. This defines a
natural inclusion $i_{(p,r,q)}^{1}: \overline{\mathcal{W}(p,r)}
\times \overline{\mathcal{M}(r,q)} \rightarrow
\overline{\mathcal{W}(p,q)}$. Similarly, we can define a natural
inclusion $i_{(p,r,q)}^{2}: \overline{\mathcal{M}(p,r)} \times
\overline{\mathcal{W}(r,q)} \rightarrow
\overline{\mathcal{W}(p,q)}$.

Suppose $f$ is bounded below. We define the evaluation map $e:
\overline{\mathcal{D}(p)} \rightarrow M$ as
\begin{equation}
e(\Gamma, x) = x.
\end{equation}
Clearly, the restriction of $e$ to $\mathcal{D}_{I} =
\mathcal{M}_{I} \times \mathcal{D}(r_{k})$ is the coordinate
projection onto $ \mathcal{D}(r_{k}) \subseteq M$.

\begin{example}
In Example \ref{example_d(p)_compactify}, the space $\overline{\mathcal{D}(p)}$ is illustrated by the right part of Figure \ref{figure_d(p)_compactify}. The evaluation map $e$ maps the interior of $\overline{\mathcal{D}(p)}$ to $\mathcal{D}(p)$. On the boundary of $\overline{\mathcal{D}(p)}$, it maps the horizontal open line segment through $r_{i}$ to $\mathcal{D}(r)$, maps the vertical open line segment through $s_{i}$ to $\mathcal{D}(s)$, ($i=1,2$), and maps the remaining part to $q$. Furthermore, it maps $r_{i}$ to $r$ and maps $s_{i}$ to $s$.
\end{example}

Suppose $p \succ r$. If $\Gamma_{1} \in \overline{\mathcal{M}(p,r)}$ and $(\Gamma_{2}, x) \in
\overline{\mathcal{D}(r)}$, then the combination of $\Gamma_{1}$ and
$\Gamma_{2}$ is a generalized flow line connecting $p$ with $x$.
This defines a natural inclusion $i_{(p,r)}:
\overline{\mathcal{M}(p,r)} \times \overline{\mathcal{D}(r)}
\rightarrow \overline{\mathcal{D}(p)}$.

By \cite[thms.\ 3.4, 3.5 and 3.7]{qin}, using an argument similar to
the proof of Theorem \ref{theorem_m(p,q)}, we can get the following
results. The proof of Theorem \ref{theorem_w(p,q)} needs the fact
that the map $\widetilde{E}$ defined in (\ref{embed_w(p,q)}) is a
smooth embedding when the vector field $X$ is locally trivial.
Although this fact is not stated in \cite{qin}, its easy to see that
it is true from the proof of \cite[thm.\ 3.5]{qin}.

\begin{theorem}\label{theorem_w(p,q)}
Suppose $f$ is proper and $X$ satisfies transversality. Then, for
each critical sequence $\{ p,q \}$, the topological space
$\overline{\mathcal{W}(p,q)} = \bigsqcup_{(I,s)} \mathcal{W}_{I,s}$
is defined by Definition \ref{definition_w(p,q)}. It has the following
properties.

(1). It is a compact topological manifold with boundary. Its
interior is $\mathcal{W}(p,q)$.

(2). Its topology is compatible with that of each $\mathcal{W}_{I,s}$.
The maps $i_{(p,r,q)}^{1}: \overline{\mathcal{W}(p,r)} \times
\overline{\mathcal{M}(r,q)} \rightarrow \overline{\mathcal{W}(p,q)}$
and $i_{(p,r,q)}^{2}: \overline{\mathcal{M}(p,r)} \times
\overline{\mathcal{W}(r,q)} \rightarrow \overline{\mathcal{W}(p,q)}$
are topological embeddings.

(3). The inclusion $i: \overline{\mathcal{W}(p,q)} \rightarrow
\overline{\mathcal{M}(p,q)} \times M$ and the map $\widetilde{E}:
\overline{\mathcal{W}(p,q)} \rightarrow \prod_{i=0}^{l}
f^{-1}(a_{i}) \times M$ are topological embeddings, where
$\widetilde{E}$ is defined in (\ref{embed_w(p,q)}).

(4). There exists a topological embedding $\iota:
\overline{\mathcal{W}(p,q)} \rightarrow \prod_{i=0}^{l}
f^{-1}(a_{i}) \times M$ such that
$\iota(\overline{\mathcal{W}(p,q)})$ is a smoothly embedded
submanifold with faces inside $\prod_{i=0}^{l} f^{-1}(a_{i}) \times
M$ and the $k$-stratum of $\iota(\overline{\mathcal{W}(p,q)})$ is
$\bigsqcup_{(I,s)} \iota(\mathcal{W}_{I,s})$, where $(I,s)$ contains
$k+2$ components.

In particular, if $M$ is compact, then there exist homeomorphisms
$h_{i}: f^{-1}(a_{i}) \rightarrow f^{-1}(a_{i})$ such that $\iota =
[(\prod_{i=0}^{l} h_{i}) \times h] \circ \widetilde{E}$ in (4).
\end{theorem}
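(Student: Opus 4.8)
The plan is to follow the pattern of the proof of Theorem \ref{theorem_m(p,q)}. First I would use Lemma \ref{lemma_reduction} to reduce the proper (possibly noncompact) case to the compact case: choose regular values $a<b$ with $M^{a,b}$ compact and containing $p$ and $q$, embed $M^{a,b}$ into a closed manifold $\widetilde M$ carrying extensions $\widetilde f$ and $\widetilde X$ with $\widetilde X$ transverse, and note that $\mathcal{W}(p,q;X)=\mathcal{W}(p,q;\widetilde X)$, so $\overline{\mathcal{W}(p,q;X)}=\overline{\mathcal{W}(p,q;\widetilde X)}$ with the same pieces $\mathcal{W}_{I,s}$; thus one may assume $M$ is compact. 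Then Theorem \ref{theorem_regular_path} yields a locally trivial negative gradient-like field $Y$ for $f$ satisfying transversality, together with a topological equivalence $h\colon M\to M$ with $h(p')=p'$ for every critical point $p'$, and Theorem \ref{theorem_topological_equivalence} gives an induced homeomorphism $h_*\colon\overline{\mathcal{W}(p,q;X)}\to\overline{\mathcal{W}(p,q;Y)}$ mapping each $\mathcal{W}_{I,s}(X)$ onto $\mathcal{W}_{I,s}(Y)$ and $\mathcal{W}(p,q;X)$ onto $\mathcal{W}(p,q;Y)$.

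With these tools in place, statement (1) follows because \cite[thm.\ 3.5]{qin} tells us $\overline{\mathcal{W}(p,q;Y)}$ is a compact smooth manifold with faces whose interior (its $0$-stratum) is $\mathcal{W}(p,q;Y)$; in particular it is a compact topological manifold with boundary, and $h_*$ carries this structure, and the interior, back to $X$. For (2), the compatibility of the topology of $\overline{\mathcal{W}(p,q)}$ with those of the $\mathcal{W}_{I,s}$ is an elementary point-set argument identical to the one in the proof of \cite[thm.\ 3.5]{qin} and needs no comparison with $Y$; the embeddedness of $i^1_{(p,r,q)}$ and $i^2_{(p,r,q)}$ is inherited from the locally trivial case because these inclusions are natural and commute with the $h_*$'s. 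Statement (3) is essentially the definition of the topologies involved (Definition \ref{definition_w(p,q)} and the discussion of $\widetilde E$ after (\ref{embed_w(p,q)})). For (4), I would take the evaluation embedding $\widetilde E_Y\colon\overline{\mathcal{W}(p,q;Y)}\to\prod_{i=0}^{l}\widetilde f^{-1}(a_i)\times\widetilde M$, which is a \emph{smooth} embedding onto a submanifold with faces in the locally trivial case (this is contained in the proof of \cite[thm.\ 3.5]{qin}, as remarked before the statement), observe that its image lies in $\prod_{i=0}^{l}f^{-1}(a_i)\times M$ — exactly as in the proof of Theorem \ref{theorem_m(p,q)}, any $Y$-flow line leaving $M^{a,b}$ attains a $\widetilde f$-value either above $b$ or below $a$, hence $\mathcal{W}(r_1,r_2;Y)\subseteq M^{a,b}$ for critical points $r_1,r_2\in M^{a,b}$ — and then set $\iota=\widetilde E_Y\circ h_*$; the claim that the $k$-stratum of $\iota(\overline{\mathcal{W}(p,q)})$ is $\bigsqcup\iota(\mathcal{W}_{I,s})$ over $(I,s)$ with $k+2$ components is just the face structure of $\overline{\mathcal{W}(p,q;Y)}$ transported by $h_*$. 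For the last assertion, when $M$ is itself compact one takes $Y$ on $M$, reuses the homeomorphisms $h_i\colon f^{-1}(a_i)\to f^{-1}(a_i)$ built in the proof of Theorem \ref{theorem_m(p,q)} (by isotopy along the flow of $Y$), and combines them with $h$ on the $M$-factor to obtain $\iota=[(\prod_{i=0}^{l}h_i)\times h]\circ\widetilde E$.

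The genuinely routine parts are the point-set topology in (2) and (3). The one input that goes beyond the three transfer tools is the fact that $\widetilde E$ is a smooth embedding for a locally trivial vector field — the $\overline{\mathcal{W}(p,q)}$-analogue of \cite[thm.\ 3.3]{qin}; this I would extract from the proof of \cite[thm.\ 3.5]{qin} rather than reprove. The remaining subtlety, and the one place where care is needed, is the image-containment step $\mathrm{Im}(\widetilde E_Y)\subseteq\prod_{i=0}^{l}f^{-1}(a_i)\times M$, i.e.\ $\mathcal{W}(r_1,r_2;Y)\subseteq M^{a,b}$, which is settled by the same monotonicity-of-$\widetilde f$-along-flow-lines argument used for $\overline{\mathcal{M}(p,q)}$.
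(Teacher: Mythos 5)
Your proposal matches the paper's intended argument exactly: the paper itself does not spell out a separate proof of this theorem but states that it follows from \cite[thms.\ 3.4, 3.5 and 3.7]{qin} by an argument parallel to that of Theorem \ref{theorem_m(p,q)}, and it flags precisely the one extra ingredient you single out, namely that $\widetilde E$ is a smooth embedding for a locally trivial vector field, which must be extracted from the proof of \cite[thm.\ 3.5]{qin}. Your reduction via Lemma \ref{lemma_reduction}, the transfer by $h_*$ from Theorem \ref{theorem_topological_equivalence}, the image-containment step, and the construction of $\iota$ and the $h_i$'s in the compact case are all exactly what the paper has in mind.
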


\begin{corollary}
Under the assumption of Theorem \ref{theorem_w(p,q)},
$\overline{\mathcal{W}(p,q)}$ carries a smooth structure compatible
with its topology such that $\overline{\mathcal{W}(p,q)}$ is a
compact smooth manifold with faces and $\partial^{k}
\overline{\mathcal{W}(p,q)} = \bigsqcup_{(I,s)} \mathcal{W}_{I,s}$,
where $(I,s)$ contains $k+2$ components.
\end{corollary}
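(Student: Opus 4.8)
The plan is to transport the smooth structure across the embedding furnished by part~(4) of Theorem~\ref{theorem_w(p,q)}, in exact parallel with the way Theorem~\ref{theorem_compact_m(p,q)} is deduced from Theorem~\ref{theorem_m(p,q)}. Write $P = \prod_{i=0}^{l} f^{-1}(a_{i}) \times M$ for the ambient smooth manifold. By Theorem~\ref{theorem_w(p,q)}(4) there is a topological embedding $\iota\colon \overline{\mathcal{W}(p,q)} \to P$ such that $N := \iota(\overline{\mathcal{W}(p,q)})$ is a smoothly embedded submanifold with faces of $P$, and such that the $k$-stratum of $N$ is $\bigsqcup_{(I,s)} \iota(\mathcal{W}_{I,s})$ with $(I,s)$ ranging over the tuples with $k+2$ components. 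I would then \emph{define} the smooth structure on $\overline{\mathcal{W}(p,q)}$ to be the pullback along $\iota$ of the manifold-with-faces structure of $N$, i.e.\ the unique smooth structure making $\iota$ a diffeomorphism onto $N$. Since $\iota$ is a topological embedding, the underlying topology of this smooth structure coincides with the topology of $\overline{\mathcal{W}(p,q)}$ from Definition~\ref{definition_w(p,q)}, so it is compatible with that topology.

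It then remains only to read off the three assertions. Compactness: by Theorem~\ref{theorem_w(p,q)}(1), $\overline{\mathcal{W}(p,q)}$ is a compact topological space, hence $N$ is compact and $\overline{\mathcal{W}(p,q)}$, being diffeomorphic to $N$, is a compact smooth manifold with faces. The stratification: the local invariant $c(\cdot)$ from Definition~\ref{definition_k_stratum} is a diffeomorphism invariant, so $\partial^{k}\overline{\mathcal{W}(p,q)} = \iota^{-1}(\partial^{k} N) = \iota^{-1}\bigl(\bigsqcup_{(I,s)} \iota(\mathcal{W}_{I,s})\bigr) = \bigsqcup_{(I,s)} \mathcal{W}_{I,s}$, the union being over $(I,s)$ with $k+2$ components, which is precisely the claimed description.

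The one point deserving care — and the place where all the real work has in fact already been done upstream — is that the smooth structure so obtained restricts on each open stratum $\mathcal{W}_{I,s} = \mathcal{M}_{I_{1}} \times \mathcal{W}(r_{s},r_{s+1}) \times \mathcal{M}_{I_{2}}$ to its intrinsic product smooth structure. This is not verified directly here; it is built into~(4), which is proved by reducing to the locally trivial case (where \cite[thm.\ 3.5]{qin} gives that $\widetilde{E}$ is a smooth embedding and the strata carry their product structures) via Lemma~\ref{lemma_reduction} and the topological equivalence of Theorem~\ref{theorem_topological_equivalence}. Granting~(4), no further argument is needed and the corollary follows formally; moreover, exactly as in the passage after Theorem~\ref{theorem_compact_m(p,q)}, in the special case that $M$ is compact one may take $\iota = [(\prod_{i=0}^{l} h_{i}) \times h]\circ\widetilde{E}$, which additionally exhibits $\widetilde{E}$ and the inclusions $i^{1}_{(p,r,q)}$, $i^{2}_{(p,r,q)}$ as smooth embeddings.
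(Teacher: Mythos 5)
Your proposal is correct and follows essentially the same route the paper intends: deducing the corollary formally by transporting the smooth manifold-with-faces structure on $\iota(\overline{\mathcal{W}(p,q)})$ from Theorem~\ref{theorem_w(p,q)}(4) back along the topological embedding $\iota$, exactly as Theorem~\ref{theorem_compact_m(p,q)} is deduced from Theorem~\ref{theorem_m(p,q)}(4). Your flagging of the point that the induced structure restricts to the intrinsic product structure on each stratum, and that this is already absorbed into the proof of~(4) via \cite[thm.\ 3.5]{qin}, Lemma~\ref{lemma_reduction}, and Theorem~\ref{theorem_topological_equivalence}, matches the paper's stance.
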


\begin{theorem}\label{theorem_d(p)}
Suppose $f$ is proper and bounded below. Suppose $X$ satisfies
transversality. Then, for each critical point $p$, the topological space
$\overline{\mathcal{D}(p)} = \bigsqcup \mathcal{D}_{I}$ is defined
by Definition \ref{definition_d(p)}. It has the following
properties.

(1). It is homeomorphic to a closed disc. Its interior is
$\mathcal{D}(p)$.

(2). Its topology is compatible with that of each $\mathcal{D}_{I}$. The
map $i_{(p,r)}: \overline{\mathcal{M}(p,r)} \times
\overline{\mathcal{D}(r)} \rightarrow \overline{\mathcal{D}(p)}$ is
a topological embedding.

(3). The evaluation map $e: \overline{\mathcal{D}(p)} \rightarrow M$
is continuous. Here, for a critical sequence $I$ with tail $r_{k}$, the restriction of $e$ to $\mathcal{D}_{I} = \mathcal{M}_{I} \times \mathcal{D}(r_{k})$ is given by
\begin{eqnarray}\label{theorem_d(p)_1}
e|_{\mathcal{D}_{I}}: \mathcal{M}_{I} \times \mathcal{D}(r_{k}) & \rightarrow & \mathcal{D}(r_{k}) \subseteq M \\
e(\alpha, x) & = & x,  \nonumber
\end{eqnarray}
i.e. $e|_{\mathcal{D}_{I}}$ is the coordinate projection onto $ \mathcal{D}(r_{k})$. In particular, $e|_{\mathcal{D}(p)}$ is the identity inclusion.

(4). It carries a smooth structure compatible with its topology such
that it is a compact smooth manifold with faces and $\partial^{k}
\overline{\mathcal{D}(p)} = \bigsqcup_{|I|=k-1} \mathcal{D}_{I}$.
\end{theorem}

%--------------------------------------------------------------------------------------------------------------------
%--------------------------------------------------------------------------------------------------------------------
\section{Orientation Formulas}\label{section_orientation_formulas}
In this section, we shall prove the following orientation formulas.

\begin{theorem}[Orientation Formulas]\label{theorem_orientation}
Suppose $f$ is proper and $X$ satisfies transversality. As oriented
topological manifolds, we have

(1). $\displaystyle \partial^{1} \overline{\mathcal{M}(p,q)} =
\bigsqcup_{p \succ r \succ q} (-1)^{\mathrm{ind}(p) -
\mathrm{ind}(r)} \mathcal{M}(p,r) \times \mathcal{M}(r,q)$;

(2). $\displaystyle \partial^{1} \overline{\mathcal{D}(p)} =
\bigsqcup_{p \succ r} \mathcal{M}(p,r) \times \mathcal{D}(r)$, where
$f$ is bounded below;

(3). $\displaystyle \partial^{1} \overline{\mathcal{W}(p,q)} =
\bigsqcup_{p \succeq r \succ q} (-1)^{\mathrm{ind}(p) -
\mathrm{ind}(r) + 1} \mathcal{W}(p,r) \times \mathcal{M}(r,q) \sqcup
\bigsqcup_{p \succ r \succeq q} \mathcal{M}(p,r) \times
\mathcal{W}(r,q)$.

In the above, $\partial^{1} \Box$ are equipped with boundary
orientations, $\Box \times \Box$ are equipped with product
orientations, and $\mathrm{ind}(*)$ is the Morse index of $*$.
\end{theorem}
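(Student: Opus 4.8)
The plan is to deduce (1)--(3) from the corresponding formulas for a locally trivial negative gradient-like field, which are established in \cite{qin}, by transporting them along the homeomorphisms $h_*$ of Theorem \ref{theorem_topological_equivalence}. First I would reduce to the case that $M$ is compact. Given a pair $(p,q)$, choose regular values $a<b$ with $p,q\in M^{a,b}$ and $M^{a,b}$ compact, and apply Lemma \ref{lemma_reduction} to embed $M^{a,b}$ in a closed manifold $\widetilde{M}$ carrying an extended Morse function $\widetilde{f}$ and an extended transverse negative gradient-like field $\widetilde{X}$. Since $\mathcal{W}(p,q;X)=\mathcal{W}(p,q;\widetilde{X})$ (and, when $f$ is bounded below, $\mathcal{D}(p;X)=\mathcal{D}(p;\widetilde{X})$ after the analogous extension), the compactified spaces $\overline{\mathcal{M}(p,q)}$, $\overline{\mathcal{W}(p,q)}$, $\overline{\mathcal{D}(p)}$, their smooth structures, and the orientations built from chosen orientations of the descending manifolds are literally unchanged; hence it suffices to prove the formulas on the compact manifold $\widetilde{M}$.

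Assume now that $M$ is compact. By Theorem \ref{theorem_regular_path} there is a locally trivial negative gradient-like field $Y$ for $f$ satisfying transversality, together with a topological equivalence (in fact a conjugacy) $h\colon M\to M$ with $h(p)=p$ for every critical point $p$. Theorem \ref{theorem_topological_equivalence} then provides homeomorphisms $h_*$ from $\overline{\mathcal{M}(p,q;X)}$, $\overline{\mathcal{W}(p,q;X)}$, $\overline{\mathcal{D}(p;X)}$ onto the corresponding spaces for $Y$. Because $h_*$ is given on generalized flow lines by $\Gamma\mapsto h(\Gamma)$ and $(\Gamma,x)\mapsto(h(\Gamma),h(x))$, these homeomorphisms commute with the gluing inclusions $i_{(p,r,q)}$, $i^1_{(p,r,q)}$, $i^2_{(p,r,q)}$, $i_{(p,r)}$, so they carry one-strata to one-strata and intertwine their decompositions into products. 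Moreover $h$ maps $\mathcal{D}(p;X)$ homeomorphically onto $\mathcal{D}(p;Y)$, whence $\mathrm{ind}_X(p)=\dim\mathcal{D}(p;X)=\dim\mathcal{D}(p;Y)=\mathrm{ind}_Y(p)$; in particular the signs $(-1)^{\mathrm{ind}(p)-\mathrm{ind}(r)}$ appearing in (1) and (3) are the same for $X$ and for $Y$.

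The heart of the proof is to pin down orientations so that every $h_*$ is orientation preserving. Fix orientations $o_p$ of $\mathcal{D}(p;X)$; these induce orientations of the moduli spaces in the standard way: co-orient $\mathcal{A}(q;X)$ near $q$ by the orientation of $\mathcal{D}(q;X)$ via $T_qM=T_q\mathcal{D}(q;X)\oplus T_q\mathcal{A}(q;X)$, orient $\mathcal{W}(p,q;X)=\mathcal{D}(p;X)\cap\mathcal{A}(q;X)$ (a transverse intersection) from $o_p$ and this co-orientation, and orient $\mathcal{M}(p,q;X)$ by dividing out the directed flow. Transport each $o_p$ to an orientation of $\mathcal{D}(p;Y)$ via the (now orientation-preserving) homeomorphism $h\colon\mathcal{D}(p;X)\to\mathcal{D}(p;Y)$, and run the same procedure for $Y$. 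One then checks that each step is invariant under topological equivalence: $h$ fixes $q$, carries $\mathcal{D}(q;X)$, $\mathcal{A}(q;X)$ onto $\mathcal{D}(q;Y)$, $\mathcal{A}(q;Y)$ and is orientation preserving on $\mathcal{D}(q;X)$, so it matches the local co-orientation of $\mathcal{A}(q;X)$ with that of $\mathcal{A}(q;Y)$ (the co-orientations being relative cohomology generators near $q$ determined by the oriented contractible disc $\mathcal{D}(q)$); being an equivalence, $h$ also carries the directed flow on $\mathcal{W}(p,q;X)$ to that on $\mathcal{W}(p,q;Y)$. Since $h$ is merely a homeomorphism, this verification must be carried out purely homologically --- with local orientations, co-orientations as relative cohomology generators, and the boundary-orientation convention phrased through connecting homomorphisms --- rather than through tangent-space splittings, and making this bookkeeping precise and consistent across the three families of moduli spaces and their gluings is the main obstacle. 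Granting it, $h_*$ is orientation preserving on each $\mathcal{M}(p,q)$, $\mathcal{W}(p,q)$, $\mathcal{D}(p)$, and hence on their compactifications as oriented manifolds with boundary.

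To finish: for the locally trivial field $Y$ the formulas (1)--(3) are exactly the (locally trivial case of the) orientation formulas established in \cite{qin}. Applying $(h_*)^{-1}$, which is an orientation-preserving homeomorphism of compact oriented manifolds with boundary, it sends $\partial^1(\cdot)$ to $\partial^1(\cdot)$, carries each product stratum for $Y$ --- $\mathcal{M}(p,r;Y)\times\mathcal{M}(r,q;Y)$, resp. $\mathcal{M}(p,r;Y)\times\mathcal{D}(r;Y)$, resp. $\mathcal{W}(p,r;Y)\times\mathcal{M}(r,q;Y)$ and $\mathcal{M}(p,r;Y)\times\mathcal{W}(r,q;Y)$ --- onto the corresponding stratum for $X$ compatibly with the product decomposition, and, by the previous paragraph, preserves the product orientations; together with the equality of the index signs this transports (1)--(3) for $Y$ into (1)--(3) for $X$. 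Combined with the reduction of the first paragraph, the theorem follows in the proper case.
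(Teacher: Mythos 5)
Your outline follows the same route as the paper: reduce to the compact case via Lemma \ref{lemma_reduction}, produce the locally trivial field $Y$ and the topological equivalence $h$ via Theorem \ref{theorem_regular_path}, and then transport the orientation formulas from $Y$ (where they are known from \cite{qin}) back to $X$ along the homeomorphisms $h_*$ of Theorem \ref{theorem_topological_equivalence}. You also correctly locate the crux: since $h$ is only a homeomorphism, the orientations and co-orientations must be handled homologically, via local orientation classes and Thom classes, rather than through tangent-space splittings.

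The gap is that you explicitly ``grant'' precisely the part that constitutes the real mathematical content of the proof. The statement that $h_*$ is orientation preserving on each $\mathcal{W}(p,q;X)$ (and hence on the $\mathcal{M}(p,q;X)$ and $\mathcal{D}(p;X)$) is not a formality: it is Lemma \ref{lemma_w_orientation} in the paper. There it is proved by (i) encoding the normal orientation of $\mathcal{A}(q;X)$ as a Thom class $\beta_q\in H^{s}(U_q,U_q-\mathcal{A}(q;X))$ determined by the orientation class $\alpha_q$ of $\mathcal{D}(q;X)$, (ii) restricting to get a Thom class $\beta_{p,q}$ for $\mathcal{W}(p,q;X)\subset U_{p,q}=\mathcal{D}(p;X)\cap U_q$, (iii) showing via a commutative diagram of inclusion maps that $h^*\beta'_{p,q}=\beta_{p,q}$, and (iv) invoking Lemma \ref{lemma_preserve_orientation} to conclude $h$ preserves the orientation of $\mathcal{W}(p,q;X)$. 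Lemma \ref{lemma_preserve_orientation} is itself nontrivial and is proved in the Appendix by a cup-product isomorphism
$H^{k}_{C}(U)\otimes H^{n-k}(U,U-L)\xrightarrow{\ \cup\ } H^{n}_{C}(U,U-L)$
for a closed tubular neighborhood $U$ of $L$, combined with localization near a point and the K\"unneth formula. Your ``one then checks'' and ``granting it'' elide exactly this chain of arguments; without it, the claim that $h_*$ preserves the orientations built from co-orientations and the flow direction is unjustified, because it is not obvious a priori that a non-smooth homeomorphism matching oriented $\mathcal{D}(q)$'s and flow lines will also match the induced transverse-intersection orientations. So the approach is right, and you have correctly flagged what is needed, but the proposal is incomplete until you supply an argument of the type given by Lemma \ref{lemma_w_orientation} and Lemma \ref{lemma_preserve_orientation}.
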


In order to explain the concepts in Theorem
\ref{theorem_orientation}, we need to review the definition of
orientation at first.

Suppose $M$ is an $n$ dimensional smooth manifold. In algebraic
topology, the orientation of $M$ at $x$ is a generator $\alpha \in
H^{n}(M, M-\{x\})$. In differential topology, the orientation is an
ordered base $\{ e_{1}, \cdots, e_{n} \} \subseteq T_{x}M$. These
two definitions are related as follows. Choose a smooth embedding
$\varphi: V \rightarrow M$ such that $\varphi(0) =x$ and $D
\varphi(0) = \text{Id}$, where $V$ is a neighborhood of $0$ in
$T_{x}M$. Then $\varphi^{*} \alpha \in H^{n}(V,V-\{0\}) =
H^{n}(T_{x}M, T_{x}M -\{0\})$ is a generator. Here $\varphi^{*}
\alpha$ does not depend on the choice of $\varphi$. Actually, if
$\widetilde{\varphi}$ is another such embedding, then there exists
an isotopy between $\varphi$ and $\widetilde{\varphi}$ in a smaller
neighborhood of $0$. Denote by $\alpha_{0}$ the preferred generator
in $H^{n}(\mathbb{R}^{n}, \mathbb{R}^{n}-\{0\})$ (see \cite[p.\
266]{milnor_stasheff}). The ordered base $\{ e_{1}, \cdots, e_{n}
\}$ determines a linear isomorphism $A: T_{x}M \rightarrow
\mathbb{R}^{n}$, then $A^{*} \alpha_{0} \in H^{n}(T_{x}M, T_{x}M
-\{0\})$ is also a generator. We say that these two definitions give
the same orientation if and only if $\varphi^{*} \alpha = A^{*}
\alpha_{0}$.

Suppose $L$ is a $k$ dimensional embedded submanifold of $M$ such
that its normal bundle is orientable. Choose a neighborhood $U$ of
$L$ such that $L$ is closed in $U$. Choose a Thom class $\beta \in
H^{n-k}(U, U-L)$. The Thom class $\beta$ defines the normal
orientation in the sense of algebraic topology. On the other hand,
for any $x \in L$, choose an ordered base $\{ \varepsilon_{k+1},
\cdots, \varepsilon_{n} \}$ of the normal space $N_{x}(L,M) =
T_{x}M/T_{x}L$. This defines the normal orientation of $L$ at $x$ in
the sense of differential topology. These two definitions are
related as follows. Let $\varphi: V \rightarrow M$ be a smooth
embedding such that $\varphi(0)=x$ and $P \cdot D \varphi(0) =
\text{Id}$, where $V$ is a neighborhood of $0$ in $N_{x}(L,M)$ and
$P: T_{x}M \rightarrow T_{x}M/T_{x}L = N_{x}(L,M)$ is the
projection. Then $\varphi^{*} \beta \in H^{n-k}(V,V-\{0\}) =
H^{n-k}(N_{x},N_{x}-\{0\})$ is a generator. Here $\varphi^{*} \beta$
does not depend on the choice of $\varphi$. The ordered base
determines an isomorphism $A: N_{x} \rightarrow \mathbb{R}^{n-k}$.
So $A^{*} \alpha_{0}$ is also a generator of
$H^{n-k}(N_{x},N_{x}-\{0\})$, where $\alpha_{0}$ is the preferred
generator of $H^{n-k}(\mathbb{R}^{n-k}, \mathbb{R}^{n-k}-\{0\})$.
These two definitions coincide if and only if $\varphi^{*} \beta =
A^{*} \alpha_{0}$.

Suppose $\{ e_{1}, \cdots, e_{k} \} \subseteq T_{x}L$ represents the
orientation of $L$ and $\{ e_{k+1}, \cdots, e_{n} \} \subseteq
T_{x}M$ represents the normal orientation of $L$. We say the
orientation $\{ e_{1}, \cdots, e_{n} \}$ of $M$ is defined by the
orientation and the normal orientation of $L$. We have the following
lemma whose proof is in the Appendix.

\begin{lemma}\label{lemma_preserve_orientation}
Suppose $M_{i}$ ($i=1,2$) is a smooth orientable manifold, $L_{i}$
is an orientable submanifold and a closed subset of $M_{i}$. Suppose the orientation and the normal
orientation of $L_{i}$ define the orientation of $M_{i}$. Let
$\beta_{i} \in H^{n-k}(M_{i}, M_{i}-L_{i})$ be the Thom class
representing the normal orientation of $L_{i}$. Let $h: (M_{1},
L_{1}) \rightarrow (M_{2}, L_{2})$ be a homeomorphism such that $h$
preserves the orientation of $M_{i}$ and $h^{*} \beta_{2} =
\beta_{1}$. Then $h$ preserves the orientation of $L_{i}$.
\end{lemma}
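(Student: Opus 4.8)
The plan is to reduce the assertion to a local statement at each point of $L_1$, to package the three orientations that occur into a single bilinear pairing, and then to exploit the naturality of that pairing under homeomorphisms of pairs.

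\emph{Localization.} Since $h$ is a homeomorphism and the orientation of a manifold is detected by its local orientation classes, it suffices to prove: for every $x\in L_1$, setting $y=h(x)$, the isomorphism $h_*\colon H_k(L_1,L_1-\{x\})\to H_k(L_2,L_2-\{y\})$ sends the orientation class of $L_1$ at $x$ to that of $L_2$ at $y$. The two hypotheses localize as well. That $h$ preserves the orientation of $M_i$ means $h_*$ sends the local orientation class $o^M_x\in H_n(M_1,M_1-\{x\})$ to $o^M_y\in H_n(M_2,M_2-\{y\})$. Writing $c=n-k$, choose a neighbourhood $U$ of $x$ so small that $h(U)$ is a neighbourhood of $y$; by excision $H_c(U,U-L_1)\cong H_c(M_1,M_1-L_1)$ and similarly for $h(U)$, and the restriction of $\beta_1$ pairs to $1$ with a unique generator $n_x\in H_c(U,U-L_1)$ (the normal orientation in homology); since $h^*\beta_2=\beta_1$, naturality of the Kronecker pairing gives $h_*n_x=n_y$, the corresponding generator determined by $\beta_2$.

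\emph{The pairing.} Fixing a chart identifying $(U,U\cap L_1,x)$ with $(\mathbb{R}^k\times\mathbb{R}^c,\mathbb{R}^k\times\{0\},0)$, and likewise near $y$, the homology cross product together with excision and the relative K\"unneth theorem yields, for $i=1,2$, an isomorphism
\[
  \mu_i\colon H_k(L_i\cap U_i,\,L_i\cap U_i-\{x_i\})\otimes H_c(U_i,\,U_i-L_i)\ \xrightarrow{\ \cong\ }\ H_n(M_i,M_i-\{x_i\}),
\]
and the hypothesis that the orientation and the normal orientation of $L_i$ define the orientation of $M_i$ says precisely that $\mu_i(o^L_{x_i}\otimes n_{x_i})=o^M_{x_i}$ (with the identical sign convention used for $i=1$ and $i=2$). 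If each $\mu_i$ is \emph{natural} with respect to $h$, the proof is immediate: from $\mu_1(o^L_x\otimes n_x)=o^M_x$ we get $\mu_2(h_*o^L_x\otimes h_*n_x)=h_*o^M_x=o^M_y$; since $h_*n_x=n_y$ and $\mu_2(o^L_y\otimes n_y)=o^M_y$, and $\mu_2$ is an isomorphism with $n_y$ a generator, it follows that $h_*o^L_x=o^L_y$.

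\emph{Naturality of $\mu_i$ --- the crux.} The pairing $\mu_i$ is built from a product chart that $h$ does not respect, so its naturality is the substantive point, and I expect it to be the main obstacle. I would establish it by identifying $\mu_i$ (equivalently, its inverse) with a composite of operations that are manifestly natural for homeomorphisms of pairs: the connecting homomorphisms of the triples $\{x_i\}\subset L_i\subset M_i$ and $\{x_i\}\subset L_i$, and the Thom isomorphism of the normal bundle of $L_i-\{x_i\}$ inside $M_i-\{x_i\}$ for the normal orientation carried by $\beta_i$. The connecting maps are obviously natural; the delicate step is the naturality of the Thom isomorphism under the merely topological $h$, which does not carry a smooth tubular neighbourhood to one. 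This I would handle by writing the inverse Thom isomorphism as cap product with the restriction of $\beta_i$, choosing smooth tubular neighbourhoods $E_1\subset M_1$, $E_2\subset M_2$ independently, and comparing through a neighbourhood $W$ chosen to be a disk subbundle of $E_1$ contained in $h^{-1}(E_2)$ --- so that $W$ retracts fibrewise onto $L_1$ within itself and $h(W)\subset E_2$ --- and then combining the excision isomorphisms $H_*(M_i,M_i-L_i)\cong H_*(E_i,E_i-L_i)$ with the naturality of cap product and the identity $h^*\beta_2=\beta_1$; the square relating the bundle retractions of $W$ and of $h(W)$ with $h|_{L_1}$ commutes up to homotopy exactly because of the fibrewise retraction of $W$. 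The extremal cases $k\le1$ or $c\le1$, where an auxiliary group acquires an extra summand, are dealt with by the same scheme after passing to reduced homology, or by a direct check. Granting the naturality of $\mu_i$, the computation of the previous paragraph completes the proof.
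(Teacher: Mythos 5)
Your proposal is correct in outline and arrives at the right result, but it takes a technically different route than the paper and in doing so makes the hardest step harder than it needs to be. The paper does not localize at all: it works globally with compact-support cohomology $H^k_C(L_i)\cong\mathbb{Z}$ (after reducing to connected $L_i$), chooses a closed tubular neighborhood $U_2$ of $L_2$ with projection $\pi_2$, and expresses the orientation of $M_2$ as a \emph{cup product} $\pi_2^*\alpha_2\cup\beta_2|_{U_2}\in H^n_C(U_2,U_2-L_2)$. The analogue of your nested-tubular-neighborhood trick appears in exactly the same role: a closed tubular neighborhood $U_1$ of $L_1$ is chosen inside $\mathrm{Int}\,h^{-1}(U_2)$, giving a restriction map $\iota^*$ and a commuting triangle relating $\pi_1^*$, $h^*\pi_2^*$, and the two section maps $i_1^*$, $i_2^*$. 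The crucial advantage of the cohomological formulation is that the naturality you flag as ``the crux'' becomes free: $h^*$ is a ring homomorphism, so $h^*(\pi_2^*\alpha_2\cup\beta_2|_{U_2})=h^*\pi_2^*\alpha_2\cup h^*\beta_2|_{U_2}$ automatically, and one only needs that the cup product pairing $H^k_C(U)\otimes H^{n-k}(U,U-L)\to H^n_C(U,U-L)$ is an isomorphism (proved in the appendix by excision, localizing at a point of $L$, and K\"unneth). Cancelling the two known factors gives $\iota^*h^*\pi_2^*\alpha_2=\pi_1^*\alpha_1$, and applying $i_1^*$ yields $h^*\alpha_2=\alpha_1$. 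By contrast, your pointwise homological $\mu_i$, built from a cross product in a product chart, requires the separate argument you sketch — identifying $\mu_i^{-1}$ with connecting maps and a Thom isomorphism written as a cap product, then pushing through the nested disk subbundle $W$. That plan is sound and should close the gap, but it is precisely the work that the cohomological formulation sidesteps; if you translate your ``three orientations multiply'' observation into compact-support cohomology and cup products, the naturality becomes formal and the rest of your argument collapses to a few lines, which is essentially what the paper does.
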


Following \cite{qin}, we define the orientations of
$\mathcal{D}(p)$, $\mathcal{W}(p,q)$ and $\mathcal{M}(p,q)$. We
review the definition by means of differential topology in \cite[p.\
500]{qin} as follows (see \cite{qin} for more details).

Assign an arbitrary orientation to $\mathcal{D}(p)$ for each
critical point $p$. Since $\mathcal{D}(q)$ and $\mathcal{A}(q)$ are
transversal, the orientation of $\mathcal{D}(q)$ gives the normal
bundle $N(\mathcal{A}(q), M) = T_{\mathcal{A}(q)} M / T
\mathcal{A}(q)$ an orientation. Since $\mathcal{D}(p)$ is transverse
to $\mathcal{A}(q)$ and $\mathcal{W}(p,q) = \mathcal{D}(p) \cap
\mathcal{A}(q)$, the orientation of $N(\mathcal{A}(q), M)$ gives the
normal bundle $N(\mathcal{W}(p,q), \mathcal{D}(p))$ an orientation.
We choose the orientation of $\mathcal{W}(p,q)$ such that the
orientation and the normal orientation of $\mathcal{W}(p,q)$ define
the orientation of $\mathcal{D}(p)$. Identify $\mathcal{M}(p,q)$
with $\mathcal{W}(p,q) \cap f^{-1}(a)$ for some regular value $a \in
(f(q), f(p))$. The orientation of $\mathcal{W}(p,q) \cap f^{-1}(a)$
is defined by the direction of the flow and the orientation of
$\mathcal{W}(p,q)$. This defines the orientation of
$\mathcal{M}(p,q)$. This definition does not depend on the choice of
$a$.

By Theorems \ref{theorem_m(p,q)}, we know
$\overline{\mathcal{M}(p,q)}$ is a topological manifold with
boundary, whose interior is $\mathcal{M}(p,q)$. Thus the orientation
of $\mathcal{M}(p,q)$ gives $\partial \overline{\mathcal{M}(p,q)}$
the \textbf{boundary orientation} in the usual sense. In differential topology, the combination of the outward normal direction and the
boundary orientation of the boundary gives the orientation of the
manifold. In algebraic topology, the boundary orientation is defined by \cite[(28.7), (28.16)]{greenberg_harper}. Also by Theorem \ref{theorem_m(p,q)}, we know that
$\partial^{1} \overline{\mathcal{M}(p,q)} = \underset{|I|=1}{\sqcup}
\mathcal{M}_{I} = \underset{p \succ r \succ q}{\sqcup}
\mathcal{M}(p,r) \times \mathcal{M}(r,q)$ is an open subset of
$\partial \overline{\mathcal{M}(p,q)}$. Thus $\partial^{1}
\overline{\mathcal{M}(p,q)}$ has the boundary orientation. On the
other hand, both $\mathcal{M}(p,r)$ and $\mathcal{M}(r,q)$ have
orientations. Thus $\mathcal{M}(p,r) \times \mathcal{M}(r,q)$ has
the \textbf{product orientation}. We shall consider the relation
between these two orientations. Similarly,
$\overline{\mathcal{D}(p)}$ and $\overline{\mathcal{M}(p,q)}$ also
have such issues. Theorem \ref{theorem_orientation} indicates these
relations.

Similarly to the previous section, by Lemma \ref{lemma_reduction},
we may assume that $M$ is compact. By Theorem
\ref{theorem_regular_path}, we can construct the locally trivial
field $Y$ and the topological equivalence $h$ mapping the orbits of
$X$ to those of $Y$.

However, since $h$ is not assumed differentiable, we have to use the algebraic
method to describe the orientation of $\mathcal{W}(p,q;X)$ again.
Choose an open tubular neighborhood $U_{q}$ of $\mathcal{A}(q;X)$
such that $\mathcal{A}(q;X)$ is closed in $U_{q}$. Suppose the index
$\mathrm{ind}(q) = s$. We have the inclusion isomorphism
\[
\xymatrix@C=0.2cm{
  H^{s}(U_{q}, U_{q} - \mathcal{A}(q;X)) \ar[rr]^-{\cong}
  && H^{s}(U_{q} \cap \mathcal{D}(q;X), U_{q} \cap \mathcal{D}(q;X) - \{q\}),
}
\]
where $H^{s}(U_{q} \cap \mathcal{D}(q;X), U_{q} \cap
\mathcal{D}(q;X) - \{q\}) = H^{s}(\mathcal{D}(q;X), \mathcal{D}(q;X)
- \{q\})$. Thus the orientation of $\mathcal{D}(q;X)$, $\alpha_{q}
\in H^{s}(\mathcal{D}(q;X), \mathcal{D}(q;X) - \{q\})$, determines a
Thom class $\beta_{q} \in H^{s}(U_{q}, U_{q} - \mathcal{A}(q;X))$.
Let $U_{p,q} = \mathcal{D}(p;X) \cap U_{q}$. Then $U_{p,q}$ is open
in $\mathcal{D}(p;X)$ and $\mathcal{W}(p,q;X)$ is closed in
$U_{p,q}$. By the inclusion monomorphism (it is an isomorphism if
and only if $U_{p,q}$ is connected)
\[
\xymatrix@C=0.2cm{
  H^{s}(U_{q}, U_{q} - \mathcal{A}(q;X)) \ar[rr] && H^{s}(U_{p,q}, U_{p,q} -\mathcal{W}(p,q;X)), }
\]
we have that $\beta_{q}$ determines a Thom class $\beta_{p,q} \in
H^{s}(U_{p,q}, U_{p,q} -\mathcal{W}(p,q;X))$. Clearly, $U_{p,q}$
inherits the orientation from $\mathcal{D}(p;X)$. Thus $\beta_{p,q}$
and the orientation of $U_{p,q}$ give $\mathcal{W}(p,q;X)$ the
orientation.

Since $h(\mathcal{D}(p;X)) = \mathcal{D}(p;Y)$, we can define the
orientation of $\mathcal{D}(p;Y)$ as $\alpha_{p}' = (h^{-1})^{*}
\alpha_{p} \in H^{s}(\mathcal{D}(p;Y), \mathcal{D}(p;Y) - \{p\})$
for each $p$. Then the orientations of $\mathcal{W}(p,q;$ $Y)$ are
defined. We also have $h(\mathcal{W}(p,q;X)) = \mathcal{W}(p,q;Y)$.

\begin{lemma}\label{lemma_w_orientation}
The topological equivalence $h$ preserves the orientation of
$\mathcal{W}(p,q;X)$.
\end{lemma}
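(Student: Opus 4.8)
The strategy is to reduce the statement to Lemma \ref{lemma_preserve_orientation} by checking that $h$ matches, on the $X$ side and on the $Y$ side, each of the three pieces of data out of which the orientation of $\mathcal{W}(p,q)$ was built: the orientation of the ambient manifold, the Thom class of the ascending manifold, and the resulting orientation of $\mathcal{W}(p,q)$. Throughout write $s=\text{ind}(q)$, so that $\mathcal{A}(q)$ has codimension $s$ in $M$ and $\mathcal{W}(p,q)$ has codimension $s$ in $\mathcal{D}(p)$. The basic observation is that, by the very definition $\alpha_r'=(h^{-1})^{*}\alpha_r$ of the orientation of each $\mathcal{D}(r;Y)$, the homeomorphism $h\colon\mathcal{D}(r;X)\to\mathcal{D}(r;Y)$ satisfies $h^{*}\alpha_r'=\alpha_r$; that is, $h$ preserves the chosen orientations of all descending manifolds, in particular of $\mathcal{D}(p;X)$ and of $\mathcal{D}(q;X)$.

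First I would show that $h^{*}\beta_q'=\beta_q$, where $\beta_q\in H^{s}(U_q,U_q-\mathcal{A}(q;X))$ and $\beta_q'\in H^{s}(h(U_q),h(U_q)-\mathcal{A}(q;Y))$ are the Thom classes of the ascending manifolds determined by $\alpha_q$ and $\alpha_q'$. Here one takes $h(U_q)$ as the ambient open set on the $Y$ side; this is legitimate because $h$ is a homeomorphism carrying the pair $(U_q,\mathcal{A}(q;X))$ onto $(h(U_q),\mathcal{A}(q;Y))$ and carrying the transverse slice $U_q\cap\mathcal{D}(q;X)$ onto $h(U_q)\cap\mathcal{D}(q;Y)$, so that the inclusion isomorphism $H^{s}(U_q,U_q-\mathcal{A}(q;X))\xrightarrow{\cong}H^{s}(U_q\cap\mathcal{D}(q;X),U_q\cap\mathcal{D}(q;X)-\{q\})$ used to define $\beta_q$ is carried by $h^{*}$ onto the corresponding map for $Y$ (in particular the latter is an isomorphism, even though $h(U_q)$ need not be a smooth tubular neighborhood). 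Since $\beta_q$ and $\beta_q'$ are, by definition, the preimages of $\alpha_q$ and $\alpha_q'$ under these isomorphisms and $h^{*}$ sends $\alpha_q'\mapsto\alpha_q$, functoriality of the resulting commuting square gives $h^{*}\beta_q'=\beta_q$. Restricting along the inclusions of pairs $(U_{p,q},U_{p,q}-\mathcal{W}(p,q;X))\hookrightarrow(U_q,U_q-\mathcal{A}(q;X))$ and $(h(U_{p,q}),h(U_{p,q})-\mathcal{W}(p,q;Y))\hookrightarrow(h(U_q),h(U_q)-\mathcal{A}(q;Y))$, where $U_{p,q}=\mathcal{D}(p;X)\cap U_q$ and $h(U_{p,q})=\mathcal{D}(p;Y)\cap h(U_q)$, and again invoking functoriality, I obtain $h^{*}\beta_{p,q}'=\beta_{p,q}$.

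Finally I would apply Lemma \ref{lemma_preserve_orientation} to the homeomorphism of pairs $h\colon(U_{p,q},\mathcal{W}(p,q;X))\to(h(U_{p,q}),\mathcal{W}(p,q;Y))$. Its hypotheses hold: $\mathcal{W}(p,q;X)$ is a closed subset of $U_{p,q}$ and $\mathcal{W}(p,q;Y)$ is a closed subset of $h(U_{p,q})$; the manifold $U_{p,q}$ is oriented by the restriction of $\alpha_p$ and $h(U_{p,q})$ by that of $\alpha_p'$, and $h$ preserves these orientations since $h^{*}\alpha_p'=\alpha_p$; $h^{*}\beta_{p,q}'=\beta_{p,q}$ by the previous step; and, by construction, the orientation together with the normal orientation of $\mathcal{W}(p,q;X)$ (respectively $\mathcal{W}(p,q;Y)$) is exactly the one defining the orientation of $U_{p,q}$ (respectively $h(U_{p,q})$). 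Lemma \ref{lemma_preserve_orientation} then yields that $h$ preserves the orientation of $\mathcal{W}(p,q)$, which is the assertion.

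The only real work is bookkeeping: one must verify that every cohomology isomorphism and every restriction map appearing in the construction of $\beta_q$, $\beta_{p,q}$ and the orientation of $\mathcal{W}(p,q;X)$ is induced by an inclusion of pairs that $h$ carries onto the analogous inclusion on the $Y$ side, so that all the relevant squares commute by naturality of singular cohomology. The point worth isolating is that, since $h$ is merely a homeomorphism, one cannot use a smooth tubular neighborhood of $\mathcal{A}(q;Y)$; this is circumvented by declaring the $Y$-side neighborhood to be $h(U_q)$, so that the whole $Y$-picture is the homeomorphic image of the $X$-picture and all naturality statements become automatic.
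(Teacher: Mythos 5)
Your argument is correct and is essentially the same as the paper's: both reduce to Lemma \ref{lemma_preserve_orientation} by taking $h(U_q)$ as the tubular neighborhood on the $Y$ side, running the commutative diagram in cohomology to deduce $h^{*}\beta_q'=\beta_q$ from $h^{*}\alpha_q'=\alpha_q$, restricting to obtain $h^{*}\beta_{p,q}'=\beta_{p,q}$, and then invoking the preserved orientation of $\mathcal{D}(p)$ (hence of $U_{p,q}$). The only (inessential) detail the paper makes explicit that you omit is the reduction to $U_{p,q}$ connected.
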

\begin{proof}
Choose the open tubular neighborhood $U_{q}$ of $\mathcal{A}(q;X)$
and define $U_{p,q} = \mathcal{D}(p;X) \cap U_{q}$ as the above.
Define $U_{q}' = h(U_{q})$ and $U_{p,q}' = h(U_{p,q})$. We may
assume $U_{p,q}$ is connected.

Suppose the orientation of $\mathcal{D}(q;Y)$ defines the Thom class
$\beta_{q}' \in H^{s}(U_{q}', U_{q}' - \mathcal{A}(q;Y))$ and the
Thom class $\beta_{p,q}' \in H^{s}(U_{p,q}', U_{p,q}' -
\mathcal{W}(p,q;Y))$. We have the following commutative diagram.
\[
\xymatrix{
  H^{s}(U_{p,q}', U_{p,q}' - \mathcal{W}(p,q;Y)) \ar[r]^-{h^{*}}
                & H^{s}(U_{p,q}, U_{p,q} - \mathcal{W}(p,q;X)) \\
  H^{s}(U_{q}', U_{q}' - \mathcal{A}(q;Y)) \ar[d] \ar[u] \ar[r]^-{h^{*}}
                & H^{s}(U_{q}, U_{q} - \mathcal{A}(q;X)) \ar[d] \ar[u]  \\
  H^{s}(U_{q}' \cap \mathcal{D}(q;Y), U_{q}' \cap \mathcal{D}(q;Y) - \{q\})
  \ar[r]^-{h^{*}}
                & H^{s}(U_{q} \cap \mathcal{D}(q;X), U_{q} \cap \mathcal{D}(q;X) - \{q\})
}
\]
All of these maps are isomorphisms. The vertical maps are induced by
inclusions. Since $h^{*} \alpha_{q}' = \alpha_{q}$, we have $h^{*}
\beta_{q}' = \beta_{q}$. Thus we get $h^{*} \beta_{p,q}' =
\beta_{p,q}$.

We also know that $h$ preserves the orientation of $U_{p,q}$. By
Lemma \ref{lemma_preserve_orientation}, the proof is completed.
\end{proof}

As in Theorem \ref{theorem_topological_equivalence}, let $h_{*}:
\overline{\mathcal{M}(p,q;X)} \rightarrow
\overline{\mathcal{M}(p,q;Y)}$, $h_{*}:
\overline{\mathcal{W}(p,q;X)} \rightarrow
\overline{\mathcal{W}(p,q;Y)}$ and $h_{*}:
\overline{\mathcal{D}(p;X)} \rightarrow \overline{\mathcal{D}(p;Y)}$
be the maps induced by $h$. Since $h$ preserves the direction of
flow, by Lemma \ref{lemma_w_orientation}, we get the following
immediately.

\begin{lemma}\label{lemma_m_orientation}
The map $h_{*}$ preserves the orientation of $\mathcal{M}(p,q;X)$.
\end{lemma}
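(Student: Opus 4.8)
The plan is to run an argument parallel to the proof of Lemma~\ref{lemma_w_orientation}, descending it through the flow. Recall that the orientation of $\mathcal{M}(p,q;X)$ is, by definition, obtained by identifying $\mathcal{M}(p,q;X)$ with a level slice $S_X = \mathcal{W}(p,q;X)\cap f^{-1}(a)$ (for a regular value $a\in(f(q),f(p))$) and orienting $S_X$ by the rule ``direction of the flow, then orientation of $S_X$, equals orientation of $\mathcal{W}(p,q;X)$''; the orientation of $\mathcal{M}(p,q;Y)$ is produced the same way from $S_Y = \mathcal{W}(p,q;Y)\cap f^{-1}(a)$. What is already known about $h$ is: it restricts to an \emph{orientation-preserving} homeomorphism $\mathcal{W}(p,q;X)\to\mathcal{W}(p,q;Y)$ (Lemma~\ref{lemma_w_orientation}), and, being a topological equivalence, it carries each $X$-orbit onto a $Y$-orbit preserving its direction; and $h_*$ is by construction the map it induces on the quotients of $\mathcal{W}(p,q;\cdot)$ by their flows. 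The goal is to show that these two facts force $h_*$ to preserve orientation.

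First I would localise. Fix $m\in\mathcal{M}(p,q;X)$, choose the lift $x\in S_X$, and set $y=h(x)$. In a neighbourhood of $y$ the $Y$-flow trivialises $\mathcal{W}(p,q;Y)$ as a product of an interval (the oriented flow direction) with a slice transverse to the flow, so that in local cohomology the orientation of $\mathcal{W}(p,q;Y)$ at $y$ is the cross (suspension) product of the class carried by the flow direction with the local orientation of that transverse slice — and the latter is precisely the pullback of the orientation of $\mathcal{M}(p,q;Y)$, by definition of that orientation and by the fact that the orientation of $\mathcal{M}(p,q;Y)$ does not depend on which transverse section is used (this is the slight strengthening of the stated independence of the choice of $a$; it is proved the same way, using the $Y$-flow to isotope one section to another through transverse sections). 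The identical description holds at $x$ for $\mathcal{W}(p,q;X)$ and $S_X$. In other words, the orientation of $\mathcal{M}(p,q;\cdot)$ is the ``de-suspension'' of that of $\mathcal{W}(p,q;\cdot)$ along the oriented flow.

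Then I would perform the cancellation in the cross product exactly as in the proof of Lemma~\ref{lemma_preserve_orientation}. Since $h^*$ is an isomorphism on the relevant local cohomology, sends the local orientation of $\mathcal{W}(p,q;Y)$ at $y$ to that of $\mathcal{W}(p,q;X)$ at $x$, and sends the flow-direction class to the flow-direction class, it sends the de-suspended class to the de-suspended class. Concretely: $h(S_X)$ is a topological section transverse to the $Y$-flow near $y$; orienting it by the same rule (flow direction, then its orientation, equals the orientation of $\mathcal{W}(p,q;Y)$), the cancellation gives that $h:S_X\to h(S_X)$ is orientation-preserving, and sliding $h(S_X)$ onto $S_Y$ along the $Y$-orbits is orientation-preserving by the independence-of-section remark above. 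Composing these, and using that the slice projections $S_X\to\mathcal{M}(p,q;X)$ and $S_Y\to\mathcal{M}(p,q;Y)$ are orientation-preserving while $h_*$ intertwines the natural projections with $h$, one gets that $h_*$ preserves the local orientation at $m$; as $m$ is arbitrary, $h_*$ preserves the orientation of $\mathcal{M}(p,q;X)$.

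The main obstacle will be that $h$ is merely a homeomorphism: there is no differential to push forward, and — worse — $h$ does not preserve $f$, so it does not carry level slices to level slices, nor does it respect the local product structure ``flow $\times$ slice'' of $\mathcal{W}(p,q;\cdot)$. This is exactly why the argument must be phrased cohomologically, admitting only the data $h$ genuinely respects (the orientation of $\mathcal{W}(p,q;\cdot)$ and the oriented orbit structure), and why one needs the observation that the orientation of $\mathcal{M}(p,q;\cdot)$ is insensitive to the choice of transverse section used to compute it.
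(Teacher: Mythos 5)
Your proposal is correct and follows the same route the paper takes: the paper simply asserts that the lemma follows ``immediately'' from Lemma~\ref{lemma_w_orientation} together with the fact that $h$ preserves the direction of flow, and you supply the cohomological details that make this assertion rigorous. In particular, you correctly identify the one genuine subtlety that the paper glosses over --- that $h$ does not carry level slices to level slices, so one must first upgrade the stated independence of the choice of $a$ to independence of the choice of transverse section before the de-suspension argument (parallel to Lemma~\ref{lemma_preserve_orientation}) can be applied.
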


\begin{proof}[Proof of Theorem \ref{theorem_orientation}]
Consider the map $h_{*}$ mentioned in the above. Clearly, $h_{*}$ is
identical to $h$ on $\mathcal{D}(p;X)$ and $\mathcal{W}(p,q;X)$.

By the definition of the orientation of $\mathcal{D}(p;Y)$, we know
$h_{*}$ preserves the orientation of $\mathcal{D}(p;X)$. Combining
this fact with Lemmas \ref{lemma_w_orientation} and
\ref{lemma_m_orientation}, we infer that $h_{*}$ preserves both the
boundary orientations and the product orientations. Thus $h_{*}$
preserves the orientation relations. Since these formulas are proved in the case of $Y$
in \cite[thm.\ 3.6]{qin}, we infer that the orientation
formulas are valid for $X$.
\end{proof}

%--------------------------------------------------------------------------------------------------------------------
%--------------------------------------------------------------------------------------------------------------------
\section{CW Structures}\label{section_CW}
In this section, We shall address the problem of the CW structures arising from a negative gradient-like dynamical system.

\begin{theorem}\label{theorem_cw_k(a)}
Suppose $f$ is proper and bounded below. Suppose $X$ satisfies
transversality. Suppose $a$ is a regular value of $f$. Define $K^{a}
= \bigsqcup_{f(p) \leq a} \mathcal{D}(p)$ with the topology induced
from $M$. Then $K^{a}$ is a finite CW complex with characteristic
maps $e: \overline{\mathcal{D}(p)} \rightarrow K^{a}$, and each $e$ has the explicit formula (\ref{theorem_d(p)_1}). The inclusion $K^{a}
\hookrightarrow M^{a}$ is a simple homotopy equivalence. In fact,
there is a CW decomposition of $M^{a}$ such that $K^{a}$ expands to
$M^{a}$ by elementary expansions.
\end{theorem}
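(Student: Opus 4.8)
The plan is to reuse the reduction scheme of Sections~\ref{section_property_moduli_spaces}--\ref{section_orientation_formulas}: replace $X$ by a compact \emph{locally trivial} model by means of Theorem~\ref{theorem_regular_path}, Theorem~\ref{theorem_topological_equivalence} and Lemma~\ref{lemma_reduction}, and then quote the corresponding CW result of \cite{qin} (\cite[thm.\ 3.8]{qin}) for the locally trivial case. First I would reduce to $M$ compact without boundary. Since $f$ is proper and bounded below, pick a regular value $b>a$; then $M^{b}$ is compact with $\partial M^{b}=f^{-1}(b)$, so Lemma~\ref{lemma_reduction} (with $M_{1}=\emptyset$) produces a closed manifold $\widetilde{M}\supseteq M^{b}$, an extension $\widetilde{f}$ of $f|_{M^{b}}$ with $\widetilde{f}|_{\widetilde{M}-M^{b}}>b$, and an extension $\widetilde{X}$ of $X|_{M^{b}}$ satisfying transversality, with $\mathcal{D}(p;\widetilde{X})=\mathcal{D}(p;X)$ for every critical point $p$. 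Then $\widetilde{M}^{a}=M^{a}$, the space $K^{a}$ is unchanged, and $\overline{\mathcal{D}(p;\widetilde{X})}=\overline{\mathcal{D}(p;X)}$ for all critical $p$ with $f(p)\le a$, so we may assume $M$ is closed.

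By Theorem~\ref{theorem_regular_path} there are a locally trivial negative gradient-like field $Y$ for $f$, satisfying transversality, and a topological conjugacy $h\colon M\rightarrow M$ between $X$ and $Y$ with $h(p)=p$ for every critical point. Since $h$ carries $X$-orbits to $Y$-orbits preserving directions and fixes critical points, $h(\mathcal{D}(p;X))=\mathcal{D}(p;Y)$ for every $p$; as the index set $\{p\mid f(p)\le a\}$ is common to $X$ and $Y$, $h$ restricts to a homeomorphism $h\colon K^{a}\rightarrow K^{a}(Y)$, where $K^{a}(Y):=\bigsqcup_{f(p)\le a}\mathcal{D}(p;Y)$. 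Note $K^{a},K^{a}(Y)\subseteq f^{-1}((-\infty,a))$, because $a$ is regular and $f\le f(p)<a$ on $\mathcal{D}(p)$ whenever $f(p)\le a$; in particular $M^{a}$ is a filtration for $X$.

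Next I would transport the CW structure. By \cite[thm.\ 3.8]{qin}, $K^{a}(Y)$ is a finite CW complex whose characteristic maps are the evaluation maps $e_{Y}\colon\overline{\mathcal{D}(p;Y)}\rightarrow K^{a}(Y)$, and $M^{a}$ carries a CW decomposition in which $K^{a}(Y)$ is a subcomplex and $M^{a}$ collapses onto $K^{a}(Y)$ by elementary collapses. By Theorem~\ref{theorem_topological_equivalence} the induced map $h_{*}\colon\overline{\mathcal{D}(p;X)}\rightarrow\overline{\mathcal{D}(p;Y)}$ is a homeomorphism which restricts to $h$ on $\mathcal{D}(p;X)$ and to a homeomorphism of the open cells; comparing the formulas $e_{X}(\Gamma,x)=x$ and $h_{*}(\Gamma,x)=(h(\Gamma),h(x))$ gives $e_{Y}\circ h_{*}=h\circ e_{X}$, i.e.\ $e_{X}=h^{-1}\circ e_{Y}\circ h_{*}$. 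Since $\overline{\mathcal{D}(p)}$ is a closed disc (Theorem~\ref{theorem_d(p)}(1)) and $h^{-1}$, $h_{*}$ are homeomorphisms, the $e_{X}$ are the characteristic maps of the CW structure on $K^{a}$ pulled back from $K^{a}(Y)$ along $h$. This establishes the first assertion.

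It remains to transfer the statements about $M^{a}$, and here lies the main obstacle: $h$ preserves orbits but not $f$-levels, so $h(M^{a})\neq M^{a}$ in general, and the CW decomposition of $M^{a}$ cannot be carried over directly. I would proceed by applying $h^{-1}$ to the collapse $M^{a}\searrow K^{a}(Y)$, obtaining a CW decomposition of $N:=h^{-1}(M^{a})$ with $K^{a}$ a subcomplex and $N\searrow K^{a}$. The remaining point — which I expect to be the hard part — is to produce a homeomorphism $N\cong M^{a}$ fixing $K^{a}$ pointwise; then the CW decomposition, the collapse and its reverse all transport to $M^{a}$, whence $K^{a}\hookrightarrow M^{a}$ is a simple homotopy equivalence (a collapse being simple) and $K^{a}$ expands to $M^{a}$ by elementary expansions. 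To build this homeomorphism one uses that $M^{a}$ and $N$ are both compact, positively invariant under the flow of $X$ (for $M^{a}$ since $Xf<0$; for $N$ since $h$ conjugates the flows), satisfy $\bigcap_{t>0}\phi_{t}(M^{a})=\bigcap_{t>0}\phi_{t}(N)=K^{a}$, and have boundaries $f^{-1}(a)$ and $h^{-1}(f^{-1}(a))$ crossed cleanly by the orbits of $X$; a flow-pushing (uniqueness-of-attractor-neighborhoods) argument then identifies the two rel $K^{a}$. The delicate feature is that $N$ is merely a \emph{topological} invariant neighborhood — $h$ being only a homeomorphism, $\partial N$ is not smoothly transverse to $X$ — so this uniqueness must be argued in the topological category.
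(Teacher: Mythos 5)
Your reduction chain (Lemma~\ref{lemma_reduction} to pass to a closed manifold, Theorem~\ref{theorem_regular_path} plus Theorem~\ref{theorem_topological_equivalence} to pass to the locally trivial model $Y$, \cite[thm.\ 3.8]{qin} for the CW/collapse statement in the locally trivial case, and the identity $e_X = h^{-1}\circ e_Y\circ h_*$ to recover the characteristic maps) is exactly the paper's route, modulo replacing $h$ by its inverse. Your identification of the CW structure on $K^a$ is correct and complete.

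Where you stop short is the last step, which you correctly flag as the crux: straightening the image level set against $M^a$ relative to the attractor. The concern you raise (that $\partial N = h^{-1}(f^{-1}(a))$ is only topologically, not smoothly, transverse to $X$) is legitimate, but the paper closes it with a concrete construction that does not need any smoothness of the transported boundary. In the paper's orientation of $h$ (mapping $Y$-orbits to $X$-orbits, so one compares $h(M^a)$ with $M^a$), every $X$-orbit through a point $y\in h(f^{-1}(a))$ meets the \emph{smooth} level set $f^{-1}(a)$ exactly once, at a time $t(y)$ which is continuous in $y$ precisely because $h$ is a topological equivalence preserving orbit directions and the flow is smooth away from critical points. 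Compactness of $h(f^{-1}(a))$ then gives a uniform $T>0$ with $T>-t(y)$, hence $\phi^1_T(M^a)\subseteq\operatorname{Int}h(M^a)$. The complements $h(M^a)\setminus\phi^1_T(M^a)$ and $M^a\setminus\phi^1_T(M^a)$ are both flow collars (no critical points inhabit them), and the flow itself, reparametrized by $t(\cdot)$, yields a homeomorphism $\psi\colon h(M^a)\to M^a$ fixing $\phi^1_T(M^a)\supseteq K^a$ pointwise. Setting $\widetilde h=\psi\circ h$ transports the CW decomposition and the collapse, giving the simple homotopy equivalence. So your outline is sound; you should replace the appeal to a generic ``uniqueness of attractor neighborhoods'' by this explicit collar matching, which is both constructive and plainly valid in the topological category.
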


\begin{theorem}\label{theorem_cw_k}
Under the assumption of Theorem \ref{theorem_cw_k(a)}, define $K =
\bigsqcup_{p \in M} \mathcal{D}(p)$. Define the topology of $K$ as
the direct limit of that of $K^{a}$ when $a$ tends to $+\infty$.
Then $K$ is a countable CW complex with characteristic maps $e:
\overline{\mathcal{D}(p)} \rightarrow K$, and each $e$ has the explicit formula (\ref{theorem_d(p)_1}). Furthermore, the inclusion $i: K
\hookrightarrow M$ is a homotopy equivalence.
\end{theorem}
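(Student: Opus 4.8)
The plan is to deduce Theorem~\ref{theorem_cw_k} from Theorem~\ref{theorem_cw_k(a)} by a direct-limit argument. Since $f$ is proper and bounded below, $M^{a}$ is compact for every regular value $a$; by Sard's theorem I would fix an increasing sequence of regular values $a_{1}<a_{2}<\cdots$ with $a_{n}\to +\infty$, so that $M=\bigcup_{n}M^{a_{n}}$ and $K=\bigcup_{n}K^{a_{n}}$, the latter carrying, by the very definition in the statement, the direct-limit topology of the $K^{a_{n}}$. Moreover $M^{a_{n}}\subseteq f^{-1}((-\infty,a_{n+1}))=\text{Int}\,M^{a_{n+1}}$, so every compact subset of $M$ lies in some $M^{a_{n}}$.

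First I would verify that each $K^{a_{n}}$ is a CW subcomplex of $K^{a_{n+1}}$. Its open cells are the $\mathcal{D}(p)$ with $f(p)\le a_{n}$, a subcollection of the cells of $K^{a_{n+1}}$; by Theorem~\ref{theorem_d(p)}(3), together with the description $\overline{\mathcal{D}(p)}=\bigsqcup_{I}\mathcal{D}_{I}$ of Definition~\ref{definition_d(p)}, the closure of such a cell is $e(\overline{\mathcal{D}(p)})=\bigcup_{r\preceq p}\mathcal{D}(r)$, and $r\preceq p$ forces $f(r)\le f(p)\le a_{n}$, so this closure lies in $K^{a_{n}}$. Hence $K^{a_{n}}$ is a closed union of cells of $K^{a_{n+1}}$ that is stable under passage to faces, i.e.\ a subcomplex. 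It follows that $K=\varinjlim_{n}K^{a_{n}}$ is a CW complex whose open cells are the $\mathcal{D}(p)$, $p\in M$, and whose characteristic maps are the composites $\overline{\mathcal{D}(p)}\xrightarrow{e}K^{a_{n}}\hookrightarrow K$ for any $n$ with $f(p)\le a_{n}$; this composite is exactly the map $e$ of Theorem~\ref{theorem_d(p)}(3). As each compact $M^{a_{n}}$ contains only finitely many critical points and $M=\bigcup_{n}M^{a_{n}}$, the set of critical points is countable, so $K$ is a countable CW complex.

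It remains to see that $i\colon K\hookrightarrow M$ is a homotopy equivalence. By Theorem~\ref{theorem_cw_k(a)} each inclusion $i_{n}\colon K^{a_{n}}\hookrightarrow M^{a_{n}}$ is a homotopy equivalence, and the $i_{n}$ assemble into a commuting ladder over the subcomplex inclusions $K^{a_{n}}\hookrightarrow K^{a_{n+1}}$ and the inclusions $M^{a_{n}}\hookrightarrow M^{a_{n+1}}$. A compact subset of the CW complex $K$ meets only finitely many cells, hence lies in some $K^{a_{n}}$, and a compact subset of $M$ lies in some $M^{a_{n}}$ as noted above; therefore, for every basepoint and every $k\ge 0$, $\pi_{k}(K)=\varinjlim_{n}\pi_{k}(K^{a_{n}})$ and $\pi_{k}(M)=\varinjlim_{n}\pi_{k}(M^{a_{n}})$, so $i_{*}$ is the colimit of the isomorphisms $(i_{n})_{*}$ and is an isomorphism on $\pi_{0}$ and on all homotopy groups. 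Since $K$ is a CW complex and $M$, as a smooth manifold, has the homotopy type of a CW complex, Whitehead's theorem shows that $i$ is a homotopy equivalence.

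Most of this is bookkeeping; the step requiring the most care is the subcomplex claim --- that the closure of the cell $\mathcal{D}(p)$ consists only of cells $\mathcal{D}(r)$ with $r\preceq p$, which rests on the structure of $\overline{\mathcal{D}(p)}$ provided by Theorem~\ref{theorem_d(p)} --- and, to a lesser extent, the verification that compact sets are captured at a finite stage so that the $\pi_{k}$-colimit argument is legitimate. Beyond this I anticipate no genuine obstacle once Theorems~\ref{theorem_cw_k(a)} and~\ref{theorem_d(p)} are in hand.
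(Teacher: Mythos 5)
Your proposal is correct and follows essentially the same route as the paper: the paper also treats the CW structure of $K$ as immediate from the structure of the $K^{a}$, deduces that $i\colon K\hookrightarrow M$ is a weak homotopy equivalence from the equivalences $K^{a}\hookrightarrow M^{a}$ of Theorem \ref{theorem_cw_k(a)} by exhausting $M$ (a compactness/direct-limit argument on homotopy groups), and then invokes Whitehead's theorem using that $M$ has the homotopy type of a CW complex. Your write-up merely fills in the details (the subcomplex check via Theorem \ref{theorem_d(p)} and the colimit of homotopy groups) that the paper leaves as ``obvious'' and ``straightforward.''
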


As mentioned before, $\dim(\mathcal{M}(p,q)) = \mathrm{ind}(p) -
\mathrm{ind}(q) - 1$ when $p \succ q$. If $\mathrm{ind}(q) = \mathrm{ind}(p) - 1$, then
$\mathcal{M}(p,q)$ is a $0$ dimensional manifold. Actually,
$\mathcal{M}(p,q)$ consists of finitely many points because it is
compact in this case.

\begin{theorem}\label{theorem_boundary_operator}
Let $K^{a}$ (or $K$) be the CW complex in Theorem
\ref{theorem_cw_k(a)} (or \ref{theorem_cw_k}). Let $C_{*}(K^{a})$
(or $C_{*}(K)$) be the associated cellular chain complex and
$[\overline{\mathcal{D}(p)}]$ be the base element represented by the
oriented $\overline{\mathcal{D}(p)}$ in $C_{*}(K^{a})$ (or
$C_{*}(K)$). Then
\[
  \partial [\overline{\mathcal{D}(p)}] = \sum_{\mathrm{ind}(q) = \mathrm{ind}(p) -1}
  \# \mathcal{M}(p,q) [\overline{\mathcal{D}(q)}],
\]
where $\# \mathcal{M}(p,q)$ is the sum of the orientations $\pm 1$
of all points in $\mathcal{M}(p,q)$ defined in Theorem
\ref{theorem_orientation}, and $\mathrm{ind}(*)$ is the Morse index of
$*$.
\end{theorem}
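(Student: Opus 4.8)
The plan is to transport the entire cellular chain complex along a topological equivalence and then quote the locally trivial case from \cite[thm.\ 3.8]{qin}. Exactly as in Sections \ref{section_property_moduli_spaces} and \ref{section_orientation_formulas}, by Lemma \ref{lemma_reduction} we may assume $M$ is compact, and then by Theorem \ref{theorem_regular_path} there is a locally trivial negative gradient-like field $Y$ for the \emph{same} function $f$ together with a topological equivalence $h\colon M\to M$ with $h(p)=p$ for every critical point $p$ and $h(\mathcal{D}(p;X))=\mathcal{D}(p;Y)$. We orient each $\mathcal{D}(p;Y)$ as the $h$-pushforward of the chosen orientation of $\mathcal{D}(p;X)$, which is the convention of Section \ref{section_orientation_formulas}; with this choice the orientation formulas of Theorem \ref{theorem_orientation} hold simultaneously for $X$ and $Y$.

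The first step is to check that $h$ is a cellular homeomorphism $K^{a}(X)\to K^{a}(Y)$. Since $X$ and $Y$ are gradient-like for the same $f$ with the same critical points, $K^{a}(Y)=\bigsqcup_{f(p)\le a}\mathcal{D}(p;Y)=h(K^{a}(X))$, and since $h$ preserves the Morse index of every critical point it carries the $k$-skeleton onto the $k$-skeleton. By Theorem \ref{theorem_topological_equivalence} the induced map $h_{*}\colon\overline{\mathcal{D}(p;X)}\to\overline{\mathcal{D}(p;Y)}$ is a homeomorphism, and directly from the formula $e(\Gamma,x)=x$ in (3) of Theorem \ref{theorem_d(p)} one has $e_{Y}\circ h_{*}=h\circ e_{X}$. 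Hence $h$ induces an isomorphism $h_{\#}\colon C_{*}(K^{a}(X))\to C_{*}(K^{a}(Y))$ of cellular chain complexes commuting with $\partial$.

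The second step is to match the distinguished bases. Because the orientation of $\mathcal{D}(p;Y)$ was defined as $h_{*}$ of that of $\mathcal{D}(p;X)$, we get $h_{\#}[\overline{\mathcal{D}(p;X)}]=[\overline{\mathcal{D}(p;Y)}]$. When $\textrm{ind}(q)=\textrm{ind}(p)-1$ the space $\mathcal{M}(p,q)$ is a finite set of signed points, and by Lemma \ref{lemma_m_orientation} the bijection $h_{*}\colon\mathcal{M}(p,q;X)\to\mathcal{M}(p,q;Y)$ preserves the sign of each point, so $\#\mathcal{M}(p,q;X)=\#\mathcal{M}(p,q;Y)$. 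Applying $h_{\#}$ to the identity of \cite[thm.\ 3.8]{qin} for $Y$ and using $h_{\#}\partial=\partial h_{\#}$ gives the asserted formula for $X$ on $K^{a}$. The case of $K$ follows by passing to the direct limit over regular values $a\to+\infty$, under which the cellular chain complexes and their bases are compatible.

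The routine but essential point to verify carefully is the compatibility of orientation conventions: one must confirm that the incidence number $[p:q]$ of the CW complex equals $\#\mathcal{M}(p,q)$ with the sign of Theorem \ref{theorem_orientation}. Geometrically this holds because $e|_{\partial\overline{\mathcal{D}(p)}}$ carries the stratum $\mathcal{M}(p,q)\times\mathcal{D}(q)$ of (2) of Theorem \ref{theorem_d(p)} onto $\mathcal{D}(q)$ by the projection, while every other stratum meeting $\mathcal{D}(q)$ has strictly smaller dimension, so the collapse map $\partial\overline{\mathcal{D}(p)}\to\overline{\mathcal{D}(q)}/\partial\overline{\mathcal{D}(q)}$ has degree equal to the signed count of $\mathcal{M}(p,q)$; for $Y$ this is precisely \cite[thm.\ 3.8]{qin}. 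With the orientation conventions pinned down in Section \ref{section_orientation_formulas}, the remainder of the argument is a formal assembly of Theorems \ref{theorem_regular_path}, \ref{theorem_topological_equivalence}, \ref{theorem_d(p)} and \ref{theorem_orientation}.
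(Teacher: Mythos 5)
Your argument is the paper's second proof spelled out in detail: reduce to compact $M$, transport the entire cellular chain complex along the topological equivalence $h$ from Theorem~\ref{theorem_regular_path}, check $h_{\#}$ is a chain isomorphism matching the distinguished bases (using Lemma~\ref{lemma_m_orientation} for the signed counts), and quote the locally trivial case from \cite{qin}. The only slip is a citation: the boundary operator formula for the locally trivial field is \cite[thm.\ 3.9]{qin}, not \cite[thm.\ 3.8]{qin} (the latter gives the CW structure), and the paper also records a first, equally short proof obtained by directly duplicating the proof of \cite[thm.\ 3.9]{qin} now that the orientation formula (2) of Theorem~\ref{theorem_orientation} has been established without the locally trivial hypothesis.
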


\begin{remark}\label{remark_CW_compact}
Consider the special case when $M$ is compact. Theorem
\ref{theorem_cw_k(a)} shows that the compactified descending
manifolds give a CW decomposition of $M$. Before the
invention of the theory of moduli spaces, this problem was addressed
in \cite[thm.\ 1]{kalmbach2} and \cite[rem.\ 3]{laudenbach}, which
show the existence of the characteristic maps under the assumption
that the vector field is locally trivial. The paper \cite[sec.\ 4]{kalmbach1} (with a correction in \cite[sec.\ 4.5]{kalmbach2}) shows that $i^{a}: K^{a} \hookrightarrow M^{a}$ is a deformation retract. Theorem \ref{theorem_cw_k(a)} strengthens their
solutions in three ways. Firstly, the characteristic maps here $e:
\overline{\mathcal{D}(p)} \rightarrow M$ have the explicit formula
(\ref{theorem_d(p)_1}). Secondly, we drop the
assumption of the local triviality of the vector field. Thirdly, it shows that $K^{a}$ expands to $M^{a}$ by elementary expansions. In the case
when $f$ has only one critical point of index $0$, the paper
\cite[lem.\ 2.15]{barraud_cornea} also gives an answer similar to
Theorem \ref{theorem_cw_k(a)}.
\end{remark}

\begin{remark}
The above theorems show that $C_{*}(K)$ computes the homology of
$M$, and its boundary operator $\partial$ coincides with that of
Morse homology. This shows Morse homology arises from a cellular
chain complex. For Morse homology, see \cite[cor.\ 7.3]{milnor2} and
\cite{schwarz}.
\end{remark}

\begin{proof}[Proof of Theorem \ref{theorem_cw_k(a)}]
By Theorem \ref{theorem_d(p)}, $\overline{\mathcal{D}(p)}$ is a
closed disc and $e$ is continuous. Thus $K^{a}$ is a finite CW
complex with characteristic maps $e$.

We shall construct the desired CW decomposition of $M^{a}$.

Suppose $M$ is not compact. By Lemma \ref{lemma_reduction}, we can
embed $M^{a}$ into $\widetilde{M}$ and extend $f|_{M^{a}}$ to be
$\widetilde{f}$ on $\widetilde{M}$ such that
$\widetilde{f}|_{\widetilde{M} - M^{a}}
> a$. We get $\widetilde{M}^{a} = M^{a}$. As a result, we may assume
$M$ is compact.

By Theorem \ref{theorem_regular_path}, we can construct a locally
trivial field $Y$ on $M$ and a topological equivalence $h$ which
maps the orbits of $Y$ to those of $X$. Consequently,
$h(\mathcal{D}(p;Y)) = \mathcal{D}(p;X)$ and $h(K^{a}(Y)) = K^{a}$
where $K^{a}(Y) = \bigsqcup_{f(p) \leq a} \mathcal{D}(p;Y)$. By
\cite[thm. 3.8]{qin}, there exists a CW decomposition of $M^{a}$
such that $K^{a}(Y)$ expands to $M^{a}$ by elementary expansions.
Thus it suffices to prove that there exists a homeomorphism
$\widetilde{h}: M^{a} \rightarrow M^{a}$ such that $\widetilde{h}$
and $h$ coincide on $K^{a}(Y)$.

Denote by $\phi^{X}_{t}$ the flow generated by $X$ and by
$\phi^{Y}_{t}$ the flow generated by $Y$. For any $x \in f^{-1}(a)$,
we have $\phi^{Y}(- \infty, x) = r_{1}$ for some $r_{1} \in M -
M^{a}$ and $\phi^{Y}(+ \infty, x) = r_{2}$ for some $r_{2} \in
M^{a}$. Since $h$ is a topological equivalence fixing $r_{1}$ and
$r_{2}$, we have $\phi^{X}_{t}(h(x))$ is a flow line between $r_{1}$
and $r_{2}$. Thus, for any $x \in h(f^{-1}(a))$, we have $\phi^{X}(t(x), x)
\in f^{-1}(a)$ for some $t(x)$ and, by Lemma \ref{lemma_flow_intersection}, $t(x)$ is continuous on
$h(f^{-1}(a))$. Since $h(f^{-1}(a))$ is compact, there exists $T >
0$ such that $T > - t(x)$ for all $x \in h(f^{-1}(a))$. As a result,
$\phi^{X}_{T} (M^{a}) \subseteq \mathrm{Int} [h(M^{a})]$. (This is
illustrated by Figure \ref{figure_psi}, $\phi^{X}_{T} (M^{a})$ is
the shadowed part, $M^{a}$ is the part below $f^{-1}(a)$ and
$h(M^{a})$ is the part below $h(f^{-1}(a))$.) By an isotopy along
the flows generated by $X$, we can construct a homeomorphism $\psi:
h(M^{a}) \rightarrow M^{a}$ such that $\psi|_{\phi^{X}_{T} (M^{a})}
= \mathrm{Id}$ and $\psi(h(M^{a}) - \phi^{X}_{T} (M^{a})) = M^{a} -
\phi^{X}_{T} (M^{a})$. Then $\widetilde{h} = \psi \circ h$ is the
desired homeomorphism.
\end{proof}

\begin{figure}[!htbp]
\centering
\includegraphics[scale=0.4]{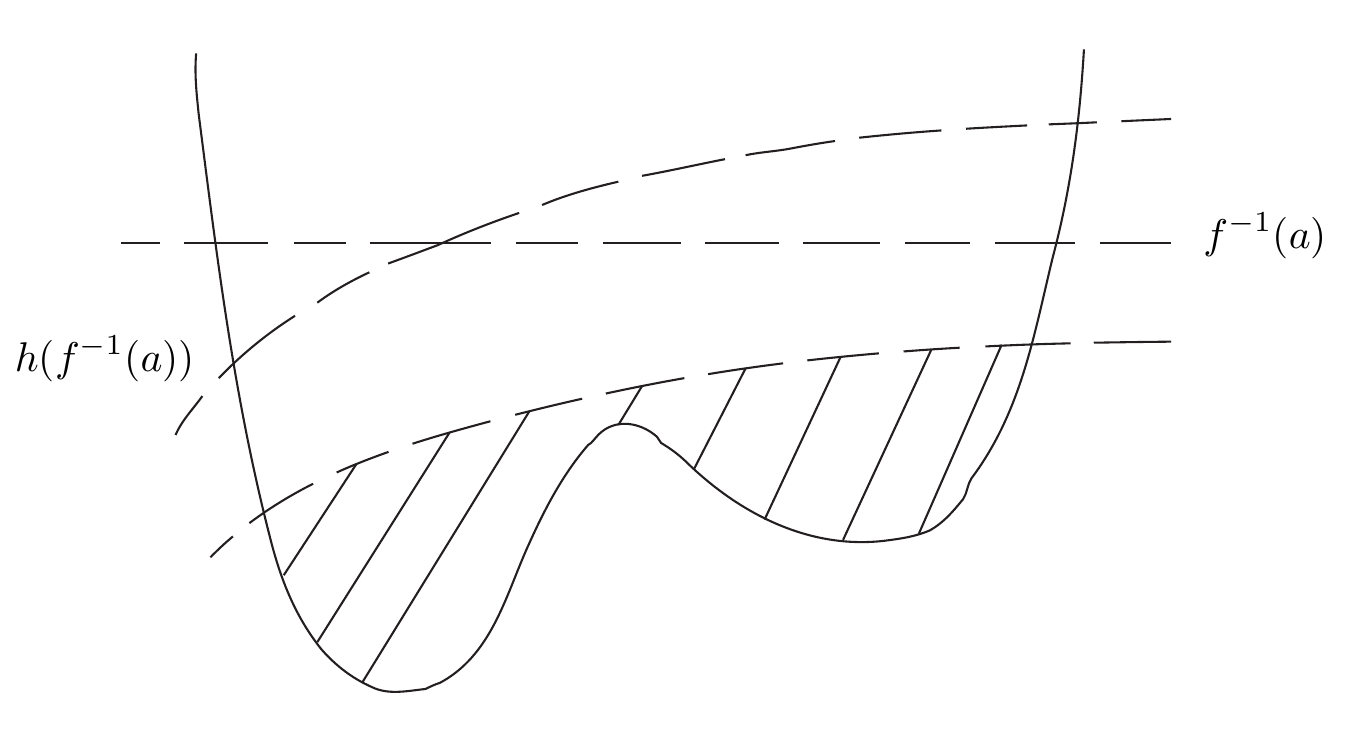} \caption{Construction of $\psi$}
\label{figure_psi}
\end{figure}

\begin{proof}[Proof of Theorem \ref{theorem_cw_k}]
The CW structure of $K$ is obvious.

By Theorem \ref{theorem_cw_k(a)}, $i: K^{a} \hookrightarrow M^{a}$
is a homotopy equivalence for any regular value $a$. Thus, it's straightforward
to check that $i: K \hookrightarrow M$ is a weak homotopy
equivalence, i.e. $i$ induces the isomorphisms between homotopy
groups. Since $M$ carries a triangulation, by Whitehead's Theorem,
$i$ is a homotopy equivalence.
\end{proof}

\begin{proof}[Proof of Theorem \ref{theorem_boundary_operator}]
There are two proofs.

First, duplicate the proof of \cite[thm.\ 3.9]{qin}. Certainly, the
local triviality of the vector field $X$ is assumed in \cite{qin}.
However, the only reason for making this assumption is that the (2)
of Theorem \ref{theorem_orientation} was proved under it in \cite{qin}. In this paper, this orientation formula is
true even if we drop this assumption. Thus, the first proof is
valid.

Second, reduce it to the case of a locally trivial vector field $Y$.
The map $h_{*}$ in Theorem \ref{theorem_topological_equivalence}
induces an isomorphism between $C_{*}(K^{a}(X))$ and
$C_{*}(K^{a}(Y))$. By Lemma \ref{lemma_m_orientation}, $h_{*}$
preserves the orientation of $\mathcal{M}(p,q;X)$. Since this
statement is true for $C_{*}(K^{a}(Y))$, the second
proof is complete.
\end{proof}

%--------------------------------------------------------------------------------------------------
%--------------------------------------------------------------------------------------------------
\appendix
\section{}
In this appendix, we shall prove Lemma
\ref{lemma_preserve_orientation}.

Suppose $M$ is an $n$ dimensional manifold. Suppose $L$ is a
connected and closed $k$ dimensional submanifold of $M$. Let $U$ be
a closed tubular neighborhood of $L$ such that $U$ is diffeomorphic
to a \textit{closed} disk bundle over $L$ via the exponential map.
Let $i: L \hookrightarrow U$ be the inclusion and $\pi: U
\rightarrow L$ be the smooth projection. Clearly, $i$ and $\pi$ are
proper. Thus $\pi^{*}: H^{k}_{C} (L) \rightarrow H^{k}_{C}(U)$ and
$i^{*}: H^{k}_{C} (U) \rightarrow H^{k}_{C}(L)$ are isomorphisms and
they are a pair of inverses, where $H^{*}_{C}$ is the cohomology
with compact support. Furthermore, $H^{k}_{C} (L) \cong \mathbb{Z}$,
its generator is an orientation of $L$.

Define $\displaystyle H^{n}_{C}(U, U - L) = \lim_{\overrightarrow{K
\subseteq L}} H^{n}(U, U-K)$, where $K$ is compact. We can prove the
inclusion $H^{n}(U, U-\{x\}) \rightarrow H^{n}_{C}(U, U-L)$ is an
isomorphism for any $x \in L$.

Suppose $\alpha \in H^{k}_{C}(L)$ and the Thom class $\beta \in
H^{n-k}(U, U-L)$ represent the orientation and the normal
orientation of $L$ respectively. Suppose the orientation and the
normal orientation define the orientation of $M$.

\begin{lemma}
The following cup product homomorphism is an isomorphism.
\[
\xymatrix@C=0.5cm{
  H^{k}_{C}(U) \otimes H^{n-k}(U, U-L) \ar[rr]^-{\cup}_-{\cong} && H^{n}_{C}(U, U-L). }
\]
Furthermore, for all $x \in L$, via the isomorphism $H^{n}(U, U-\{x\}) \rightarrow
H^{n}_{C}(U, U-L)$, we get $\pi^{*} \alpha \cup \beta \in
H^{n}_{C}(U, U-L)$ represents the orientation of $M$ in $H^{n}(U,
U-\{x\})$.
\end{lemma}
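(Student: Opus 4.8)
The plan is to derive both claims from the \emph{relative} Thom isomorphism, applied at every stage of the direct system that defines cohomology with compact supports. Recall the setup: via the exponential map $U$ is the total space of the closed disk bundle of the normal bundle $\nu=N(L,M)$, with projection $\pi\colon U\to L$ and zero section $L\subseteq U$; the complement $U-L$ deformation retracts fiberwise onto the sphere bundle $\partial U$, and $\beta\in H^{n-k}(U,U-L)$ is the Thom class, whose restriction to each fiber disk is the chosen normal orientation. The tool I will use is that, for every compact $K\subseteq L$, cup product with $\beta$ is an isomorphism
\[
  \cup\,\beta\colon H^{j}\bigl(U,\,U-\pi^{-1}(K)\bigr)\ \xrightarrow{\ \cong\ }\ H^{j+(n-k)}\bigl(U,\,(U-\pi^{-1}(K))\cup(U-L)\bigr)=H^{j+(n-k)}(U,\,U-K),
\]
where the last equality holds because $\pi^{-1}(K)\cap L=K$. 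This relative Thom isomorphism is proved by Mayer--Vietoris over a trivializing cover of $L$, reducing to the trivial bundle, exactly as in \cite{milnor_stasheff}; it is natural in $K$, and the map is literally cup product with the \emph{single} class $\beta$.

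First I would pass to the direct limit over compact $K\subseteq L$ in this isomorphism with $j=k$. The sets $\pi^{-1}(K)$ are cofinal among the compact subsets of $U$, since any compact $C\subseteq U$ lies in $\pi^{-1}(\pi(C))$; hence the left-hand colimit is $H^{k}_{C}(U)$, while the right-hand colimit is $H^{n}_{C}(U,U-L)$ by the definition of the latter. Because $\beta$ restricts compatibly throughout the system, the limit map is again cup product with $\beta$, so $\cup\,\beta\colon H^{k}_{C}(U)\to H^{n}_{C}(U,U-L)$ is an isomorphism. For the full statement, the ordinary degree-$0$ Thom isomorphism $H^{0}(U)\xrightarrow{\cup\beta}H^{n-k}(U,U-L)$ shows (as $L$ is connected) that $H^{n-k}(U,U-L)=\mathbb{Z}\langle\beta\rangle$; so under the identification $H^{k}_{C}(U)\otimes H^{n-k}(U,U-L)=H^{k}_{C}(U)\otimes\mathbb{Z}\langle\beta\rangle\cong H^{k}_{C}(U)$, the cup-product homomorphism of the lemma is identified with $\cup\,\beta$ and is therefore an isomorphism. (The isomorphism $\pi^{*}\colon H^{k}_{C}(L)\xrightarrow{\cong}H^{k}_{C}(U)$ recorded earlier moreover identifies the source with $H^{k}_{C}(L)\cong\mathbb{Z}$, generated by the orientation class $\alpha$.)

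For the second assertion I would localize at $x$. Taking $K=\{x\}$ (the fiber disk $\pi^{-1}(x)$ is compact) in the displayed relative Thom isomorphism with $j=k$, and comparing with the full colimit via the structure maps, produces a commutative square: the top row is the isomorphism $\cup\,\beta\colon H^{k}(U,U-\pi^{-1}(x))\xrightarrow{\cong}H^{n}(U,U-\{x\})$, the bottom row is $\cup\,\beta\colon H^{k}_{C}(U)\xrightarrow{\cong}H^{n}_{C}(U,U-L)$, the right vertical arrow is the isomorphism $H^{n}(U,U-\{x\})\to H^{n}_{C}(U,U-L)$ supplied in the text, and the left vertical arrow is the isomorphism carrying the representative of $\pi^{*}\alpha$ at the stage $K=\{x\}$ to $\pi^{*}\alpha$. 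Under $\pi^{*}\colon H^{k}(L,L-x)\xrightarrow{\cong}H^{k}(U,U-\pi^{-1}(x))$ that representative is the local orientation of $L$ at $x$ determined by $\alpha$, so a diagram chase identifies the preimage of $\pi^{*}\alpha\cup\beta$ in $H^{n}(U,U-\{x\})$ with the image, under the relative Thom isomorphism, of the class representing the orientation of $L$ at $x$. Computing that image is purely local: choose a chart in which $(U,U-\pi^{-1}(x))$ near $x$ is $(\mathbb{R}^{k}\times\mathbb{R}^{n-k},\,(\mathbb{R}^{k}-0)\times\mathbb{R}^{n-k})$ with $\pi$ the first projection; then $\pi^{*}\alpha$ is $\mathrm{pr}_{1}^{*}$ of the generator of $H^{k}(\mathbb{R}^{k},\mathbb{R}^{k}-0)$ dual to $\{e_{1},\dots,e_{k}\}$, $\beta$ is $\mathrm{pr}_{2}^{*}$ of the generator of $H^{n-k}(\mathbb{R}^{n-k},\mathbb{R}^{n-k}-0)$ dual to $\{e_{k+1},\dots,e_{n}\}$, and their cup product is the cross product of these two generators, which by the standard cross-product convention is the generator of $H^{n}(\mathbb{R}^{n},\mathbb{R}^{n}-0)=H^{n}(U,U-\{x\})$ dual to the concatenated basis $\{e_{1},\dots,e_{n}\}$ --- i.e.\ the orientation of $M$ at $x$ defined by the orientation and the normal orientation of $L$.

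The step I expect to be the real obstacle is this last sign bookkeeping: one must verify that the cross product $\pi^{*}\alpha\cup\beta$, taken in \emph{this} order, is dual to the basis $\{e_{1},\dots,e_{k},e_{k+1},\dots,e_{n}\}$ in precisely the ordering fixed by the main text for ``the orientation of $M$ defined by the orientation and the normal orientation of $L$'', rather than a permutation of it, so that the lemma is compatible with the conventions used throughout Section~\ref{section_orientation_formulas}. Everything else --- the relative Thom isomorphism and the cofinality/colimit manipulations --- is routine; the only subtlety there is to confirm that cupping with $\beta$ genuinely commutes with the direct-limit maps, which is immediate because $\beta$ is one fixed absolute class restricting compatibly over the entire system.
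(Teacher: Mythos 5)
Your proposal is correct and takes essentially the same route as the paper: both reduce to a local computation near $x$ via a commutative square relating the compactly-supported cup pairing to the local pairing on $(U,U-\pi^{-1}(x))\times(U,U-L)\to(U,U-\{x\})$, and both finish by exploiting the product structure of the disk bundle near $x$ together with the K\"unneth/cross-product formula. The extra detail you supply — the relative Thom isomorphism over compact $K\subseteq L$, the cofinality of $\{\pi^{-1}(K)\}$ in the colimit defining $H^k_C(U)$, and the identification $H^{n-k}(U,U-L)\cong\mathbb{Z}\langle\beta\rangle$ — is exactly what the paper compresses into the phrase ``by excision and the basic property of Thom class, we can localize the argument near $x$.''
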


\begin{proof}
For any $x \in L$, we have a commutative
diagram
\[
\xymatrix{
  H^{k}(U, U-\pi^{-1}(x)) \otimes H^{n-k}(U, U-L) \ar[d]_-{\cong} \ar[r]^-{\cup}
                & H^{n}(U, U-\{x\}) \ar[d]^{\cong}  \\
  H^{k}_{C}(U)  \otimes  H^{n-k}(U, U-L) \ar[r]^-{\cup}
                & H^{n}_{C}(U, U-L)\, .             }
\]
Here the vertical maps are induced by inclusions and are isomorphisms. The horizontal
ones are given by cup product pairings. By excision and the basic
property of Thom class, we can localize the argument near $x$.
However, the disk bundle near $x$ has a product structure. Now apply
K\"{u}nneth Formula to the upper horizontal map, which completes the proof.
\end{proof}

\begin{proof}[Proof of Lemma \ref{lemma_preserve_orientation}]
It suffices to prove the special case of that $L_{i}$ is connected.

Let $U_{2}$ be a \textit{closed} tubular neighborhood of $L_{2}$
with the smooth projection $\pi_{2}: U_{2} \rightarrow L_{2}$. Let
$\alpha_{2} \in H^{k}_{C}(L_{2})$ be the orientation of $L_{2}$, by
the above lemma, we have $\pi_{2}^{*} \alpha_{2} \cup
\beta_{2}|_{U_{2}} = \gamma_{2} \in H^{n}_{C}(U_{2}, U_{2} - L_{2})$
represents the orientation of $M_{2}$ on $L_{2}$. Here
$\beta_{2}|_{U_{2}}$ is the image of $\beta_{2}$ under the inclusion
$H^{n-k}(M_{2}, M_{2} - L_{2}) \rightarrow H^{n-k}(U_{2}, U_{2} -
L_{2})$. It is the restriction of $\beta_{2}$ to $U_{2}$.

Let $U_{1}' = h^{-1}(U_{2})$. Choose a \textit{closed} tubular
neighborhood $U_{1}$ of $L_{1}$ such that $U_{1} \subseteq
\mathrm{Int} U_{1}'$ and $\pi_{1}: U_{1} \rightarrow L_{1}$ is a
smooth projection. By the above lemma again, we have the following
isomorphism
\[
\xymatrix@C=0.5cm{
  H^{k}_{C}(U_{1}) \otimes H^{n-k}(U_{1}, U_{1} - L_{1}) \ar[rr]^-{\cup}_-{\cong} && H^{n}_{C}(U_{1}, U_{1} - L_{1}),  }
\]
and
\begin{equation}\label{lemma_preserve_orientation_1}
  \pi_{1}^{*} \alpha_{1} \cup \beta_{1}|_{U_{1}} = \gamma_{1}
\end{equation}
represents the orientation of $M_{1}$ on $L_{1}$, where
$\beta_{1}|_{U_{1}}$ is the restriction of $\beta_{1}$ to $U_{1}$.

Consider the following commutative diagram:
\[
\xymatrix{
  H^{k}_{C}(U_{2}) \ar[d]_-{i_{2}^{*}} \ar[r]^-{h^{*}}
                & H^{k}_{C}(U_{1}') \ar[d]_-{j^{*}} \ar[r]^-{\iota^{*}} & H^{k}_{C}(U_{1}) \ar[dl]^-{i_{1}^{*}} \\
  H^{k}_{C}(L_{2}) \ar[r]_{h^{*}}
                & H^{k}_{C}(L_{1}) \, ,
                           }
\]
where, $i_{1}$, $i_{2}$, $j$ and $\iota$ are inclusions. Since
$h^{*} \pi_{2}^{*} \alpha_{2} \cup h^{*} \beta_{2}|_{U_{2}} = h^{*}
\gamma_{2}$, we have $\iota^{*} h^{*} \pi_{2}^{*} \alpha_{2} \cup
\iota^{*} h^{*} \beta_{2}|_{U_{2}} = \iota^{*} h^{*} \gamma_{2}$.
Since $h$ preserves the orientation of $M_{1}$ and the Thom class,
we have $\iota^{*} h^{*} \gamma_{2} = \gamma_{1}$ and $\iota^{*}
h^{*} \beta_{2}|_{U_{2}} = \beta_{1}|_{U_{1}}$. Thus
\begin{equation}\label{lemma_preserve_orientation_2}
  \iota^{*} h^{*} \pi_{2}^{*} \alpha_{2} \cup \beta_{1}|_{U_{1}} =
  \gamma_{1}.
\end{equation}
Since the cup product pairing above is an isomorphism, by
(\ref{lemma_preserve_orientation_1}) and
(\ref{lemma_preserve_orientation_2}), we infer $\iota^{*} h^{*}
\pi_{2}^{*} \alpha_{2} = \pi_{1}^{*} \alpha_{1}$. So we have
\[
   \alpha_{1}= i_{1}^{*} \pi_{1}^{*} \alpha_{1} = i_{1}^{*} \iota^{*} h^{*} \pi_{2}^{*} \alpha_{2}  = h^{*} i_{2}^{*}
  \pi_{2}^{*} \alpha_{2} = h^{*} \alpha_{2}.
\]
This completes the proof.
\end{proof}

%--------------------------------------------------------------------------------------------------
%--------------------------------------------------------------------------------------------------
\section*{Acknowledgements}
I wish to thank an anonymous mathematician who meticulously read through this paper and made many helpful suggestions which lead to an improved presentation of this paper. I am indebted to my PhD advisor Prof.\ John Klein for his direction, his patient educating, and his continuous encouragement. This work was partially supported by NSFC11871272.

%--------------------------------------------------------------------------------------------------
%--------------------------------------------------------------------------------------------------


\begin{thebibliography}{bib}
\bibitem{barraud_cornea}J. Barraud and O. Cornea, Lagrangian intersections and the
Serre spectral sequence, Ann. of Math., {\bf 166} (2007), 657--722.

\bibitem{bott}R. Bott, Morse theory indomitable, Publications Math\'{e}matiques de l'I.H.\'{E}.S.,
{\bf 68} (1988), 99-114.

\bibitem{burghelea_haller}D. Burghelea and S. Haller, On the topology and analysis
of a closed one form (Novikov's theory revisited), Essays on
geometry and related topics, 133-175, Monogr. Enseign. Math., {\bf
38}, Enseignement Math., Geneva, 2001.

\bibitem{douady}A. Douady, Vari\'{e}t\'{e}s \`{a} bord anguleux et
voisinages tubulaires, S\'{e}minaire Henri Cartan, {\bf 14}
(1961-1962), 1-11.

\bibitem{franks}J. Franks, Morse-Smale flows and homotopy theory,
Topology, {\bf 18} (1979), 199-215.

\bibitem{greenberg_harper}M. Greenberg and J. Harper, Algebraic Topology, A First Course, Mathematics Lecture Note Series {\bf 58}, Benjamin/Cummings Publishing Co. Inc., 1981.

\bibitem{irwin1}M. Irwin, On the stable manifold theorem, Bull. London Math. Soc. {\bf 2} (1970),
196-198.

\bibitem{irwin2}M. Irwin, On the smoothness of the composition map,
Quart. J. Math. Oxford Ser. (2), {\bf 23} (1972), 113-133.

\bibitem{janich}K. J\"{a}nich, On the classification of
$O(n)$-manifolds, Math. Annalen, {\bf 176} (1968), 53-76.

\bibitem{kalmbach1}G. Kalmbach, Deformation retracts and weak
deformation retracts of noncompact manifolds, Proc. Amer. Math. Soc.
{\bf 20} (1969), 539-544.

\bibitem{kalmbach2}G. Kalmbach, On some results in Morse theory. Canad. J. Math.
{\bf 27} (1975), 88-105.

\bibitem{laudenbach}F. Laudenbach, On the Thom-Smale complex,
Appendix to Ast\'{e}risque, {\bf 205} (1992).

\bibitem{latour}F. Latour, Existence de $1$-formes ferm\'{e}es non singuli\`{e}res dans une classe de cohomologie de de
Rham, Publications math\'{e}matiques de l'I.H.\'{E}.S., {\bf 80}
(1994), 135-194.

\bibitem{milnor2}J. Milnor, Lectures on the h-cobordism theorem, Princeton University Press, 1965.

\bibitem{milnor_stasheff}J. Milnor and J. Stasheff, Characteristic Classes, Princeton University Press, 1974.

\bibitem{newhouse_peixoto}S. Newhouse and M. Peixoto, There is a
simple arc joining any two Morse-Smale flows, Ast\'{e}risque, {\bf
31} (1976), 16-41.

\bibitem{nitecki_shub}Z. Nitecki and M. Shub, Filtrations, decompositions, and explosions,
Amer. J. Math., {\bf 97} (1975), 1029-1047.

\bibitem{palis}J. Palis, On Morse-Smale dynamical systems, Topology, {\bf 8} (1968), 385-404.

\bibitem{palis_de}J. Palis and W. de Melo, Geometric Theory of Dynamical Systems, Springer-Verlag, 1982.

\bibitem{palis_smale}J. Palis and S. Smale, Structural stability theorems,
1970 Global Analysis (Proc. Sympos. Pure Math., Vol. XIV, Berkeley,
Calif., 1968), 223-231.

\bibitem{qin}L. Qin, On moduli spaces and CW structures arising from Morse theory on
Hilbert manifolds, J. Topol. Anal., {\bf 2} (2010), 469-526.

\bibitem{schwarz}M. Schwarz, Morse Homology, Progress in Mathematics, {\bf
111}, Birkh\"{a}user Verlag, 1993.

\bibitem{siebenmann} L. Siebenmann, The obstruction to finding a boundary for an open manifold of dimension greater than five, Thesis (Ph.D.)-Princeton University, 1965.

\bibitem{smale3}S. Smale, Morse inequalities for a dynamical system, Bull. Amer. Math. Soc., {\bf 66} (1960), 43-49.

\bibitem{smale2}S. Smale, On dynamical systems, Bol. Soc. Mat. Mexicana (2), {\bf 5} (1960), 195-198.

\bibitem{smale1}S. Smale, On gradient dynamical systems, Ann. of Math., {\bf 74} (1961), 199-206.

\bibitem{thom}R. Thom, Sur une partition en cellules associ\'{e}e
\`{a} une fonction sur une vari\'{e}t\'{e}, C.R. Acad. Sci. Paris,
{\bf 228} (1949), 973-975.
\bibitem{walter}W. Walter, Ordinary Differential Equations, Graduate Text in Mathematics 182, Springer-Verlag, 1998.
\end{thebibliography}
\end{document}